\let\csname equation*\endcsname\relax
\let\csname endequation*\endcsname\relax
\newcommand{\bbR}{\mathbb{R}}
\DeclareMathOperator{\diag}{diag}
\DeclareMathOperator{\re}{Re}
\DeclareMathOperator{\im}{Im}
\DeclareMathOperator{\spn}{span}
\DeclareMathOperator{\supp}{supp}
\DeclareMathOperator{\len}{len}
\DeclareMathOperator{\rank}{rank}
\DeclareMathOperator{\divr}{div}
\DeclareMathOperator{\ran}{ran}
\DeclarePairedDelimiter\abs{\lvert}{\rvert}
\DeclarePairedDelimiter\norm{\lVert}{\rVert}
\def\sto{\xrightarrow{\text{s}}} % strong "\to"
\def\nto{\xrightarrow{\text{n}}} % norm "\to"
\def\srto{\xrightarrow{\text{sr}}} % strong resolvent "\to"
\def\sdto{\xrightarrow{\text{sd}}} % strong dynamical "\to"
\newcounter{assump}
\newtheorem{theorem}{Theorem}[section]
\newtheorem{proposition}[theorem]{Proposition}
\newtheorem{lemma}[theorem]{Lemma}
\theoremstyle{definition}
\newtheorem{definition}[theorem]{Definition}
\newtheorem*{definition*}{Definition}
\newtheorem*{problem*}{Problem}
\newtheorem*{exercise*}{Exercise}
\theoremstyle{remark}
\newtheorem{remark}[theorem]{Remark}
\newtheorem*{remark*}{Remark}
\crefname{prop}{property}{properties}
\crefname{assump}{assumption}{assumptions}
\begin{document}
\title[Consistent spectral approximation of Koopman operators]{Consistent spectral approximation of Koopman operators using resolvent compactification}
\author{Dimitrios Giannakis$^1$ and Claire Valva$^2$}
\address{$^1$ Department of Mathematics, Dartmouth College, Hanover, New Hampshire, USA}
\address{$^2$ Courant Institute of Mathematical Sciences, New York University, New York, New York, USA}
\ead{clairev@nyu.edu}

\begin{abstract}
    Koopman operators and transfer operators represent dynamical systems through their induced linear action on vector spaces of observables, enabling the use of operator-theoretic techniques to analyze nonlinear dynamics in state space. The extraction of approximate Koopman or transfer operator eigenfunctions (and the associated eigenvalues) from an unknown system is nontrivial, particularly if the system has mixed or continuous spectrum. In this paper, we describe a spectrally accurate approach to approximate the Koopman operator on $L^2$ for measure-preserving, continuous-time systems via a ``compactification'' of the resolvent of the generator. This approach employs kernel integral operators to approximate the skew-adjoint Koopman generator by a family of skew-adjoint operators with compact resolvent, whose spectral measures converge in a suitable asymptotic limit, and whose eigenfunctions are approximately periodic. Moreover, we develop a data-driven formulation of our approach, utilizing data sampled on dynamical trajectories and associated dictionaries of kernel eigenfunctions for operator approximation. The data-driven scheme is shown to converge in the limit of large training data under natural assumptions on the dynamical system and observation modality. We explore applications of this technique to dynamical systems on tori with pure point spectra and the Lorenz 63 system as an example with mixing dynamics.

    \bigskip

    \noindent{\it Keywords\/}: Koopman operators, transfer operators, measure-preserving systems, spectral approximation, kernel methods, data-driven techniques
\end{abstract}

\ams{37A10, 37E99, 37G30}

\maketitle

\section{Introduction}
\label{sec:intro}

The operator-theoretic formulation of ergodic theory characterizes dynamical systems through their induced action on linear spaces of observables, realized via composition operators, known as Koopman operators, and their duals, known as transfer or Ruelle--Perron--Frobenius operators \cite{Baladi00,EisnerEtAl15}. Dating back at least to work of Koopman and von Neumann in the 1930s \cite{Koopman31,KoopmanVonNeumann32}, this framework has proven to be a highly powerful analytical tool, allowing to translate problems about nonlinear dynamics, such as occurrence of ergodicity and mixing, to equivalent linear operator problems. Over approximately the past two decades, advances in computational capabilities, numerical methods, and data science have spurred considerable interest in the development of methodologies utilizing these operators for data-driven analysis and forecasting of observables. These methods offer a solid theoretical foundation combined with applicability under real-world data observation modalities, and have been successfully applied in diverse areas such as fluid dynamics, climate dynamics, and molecular dynamics among other fields; see, e.g., the surveys \cite{BruntonEtAl22,KlusEtAl18,Mezic13,OttoRowley21}.

Yet, despite considerable theoretical and methodological progress, fundamental questions remain, particularly with regards to the complex spectral characteristics of evolution operators associated with deterministic dynamics. Indeed, a measure-preserving flow is weak-mixing if and only if the Koopman operator on $L^2$ has a simple eigenvalue at 1, with a constant corresponding eigenfunction, and no other eigenvalues \cite{Walters81}. This means that when faced with systems of high dynamical complexity, numerical methods must be able to consistently approximate the continuous Koopman spectrum, which has been a major challenge. Even if the Koopman operator is diagonalizable, its spectrum is oftentimes not discrete (e.g., it forms a dense subset of the unit circle in the measure-preserving case), which also presents numerical challenges. The spectral properties associated with deterministic dynamics are in stark contrast to the behavior of stochastic systems, where the Kolmogorov and Fokker-Planck operators (the stochastic analogs of the Koopman and transfer operators, respectively) have typically discrete spectra due to the presence of diffusion.

In this paper, we propose a data-driven methodology for spectrally consistent approximation of Koopman and transfer operators in continuous-time measure-preserving ergodic flows that approximates the generator by a skew-adjoint operator with compact resolvent (and thus discrete spectrum). Our approach is based on a combination of ideas from two recent papers on Koopman operator approximation: The paper \cite{DasEtAl21} introduced an approximation technique that regularizes the generator, $V$, of the unitary Koopman group $U^t = e^{tV}$ by pre- and post-composition with kernel integral operators mapping into reproducing kernel Hilbert spaces (RKHSs). This results in a family of compact, skew-adjoint operators $V_\tau$, $\tau>0$, which are diagonalizable and whose spectral measures converge to the spectral measure of $V$ in the limit of vanishing regularization parameter $\tau$. Meanwhile, in \cite{SusukiEtAl21}, Susuki, Mauroy, and Mezi\'c developed a Koopman spectral analysis technique that focuses on the \emph{resolvent} of the generator, $R_z(V) = (z - A)^{-1}$, where $z \in \mathbb C$ is an element of the resolvent set of $V$. A key element of their approach is the use of the integral representation of the resolvent,
\begin{equation}
    \label{eq:resolventintegral}
    R_z(V) = \int_0^\infty e^{-zt} U^t \, dt, \quad \re z > 0,
\end{equation}
which provides a means for computing $R_z(V)$ via quadrature involving the unitary Koopman operators $U^t$ without requiring access to the unbounded generator $V$.

Here, we use the compactification approach of \cite{DasEtAl21} and the integral representation of the resolvent employed in \cite{SusukiEtAl21} to build a compact operator $R_{z,\tau}$ which is the resolvent of a skew-adjoint operator. Using this construction, we obtain a family of skew-adjoint unbounded operators $V_{z,\tau}$ with compact resolvent that converge spectrally to $V$. We view this as a more natural approximation than the compact approximation $V_\tau$ from \cite{DasEtAl21}, which could be said to be over-regularized since $V_\tau$ is bounded whereas $V$ is unbounded. More precisely, it is known that certain one-parameter families of skew-adjoint operators with compact resolvents exhibit holomorphic dependence of the eigenvalues and corresponding eigenspaces on the parameter, which is not true for families of compact operators due to the concentration of the spectrum near 0 \cite[Remark~3.11]{Kato95}. Furthermore, our approximation is structure-preserving in the sense that $V_{z,\tau}$ generates a unitary evolution group analogously to $V$. In this paper, we also develop data-driven approximations of $V_{z,\tau}$ that are skew-adjoint and converge to $V_{z,\tau}$ in suitable asymptotic limits.

\subsection{Review of data-driven methodologies}
\label{sec:literature}

Since the early works of Mezi\'c and Banaszuk \cite{Mezic05,MezicBanaszuk04} and Dellnitz, Froyland, and Junge \cite{DellnitzJunge99,Froyland97} on Koopman and transfer operators, respectively, operator-theoretic techniques have emerged as a highly popular framework for analysis of data generated by dynamical systems, with a correspondingly large literature. In broad terms, one can categorize approximation techniques based the function space in which approximation is performed and the topology used to characterize convergence. For forecasting applications, convergence in strong or weak operator topologies is generally adequate since one is interested in approximating the action of the Koopman or transfer operators on specific forecast observables and/or functionals (e.g., expectation functionals). On the other hand, spectral decomposition techniques generally require stronger approximations to ensure convergence of the spectral measure and/or eliminate spectral pollution. For example, strong resolvent convergence, or generalized strong resolvent convergence, imply convergence of the spectral measure \cite{Bogli17,Chatelin11} (without eliminating the possiblity of spectral pollution; see \cref{rk:spectral_pollution}). In any of these settings, the choice of function space is a key consideration affecting the convergence, robustness, and amount of information that can be extracted from numerical techniques, particularly in spectral analysis applications where the Koopman and transfer operator spectra depend strongly on the space of functions on which these operators are allowed to act. Below, we outline some of the techniques that are more closely related to the methodology described in this paper.

The dynamic mode decomposition (DMD) was originally proposed by Schmid and Sesterhenn as a method for analysis of fluid mechanical snapshot data \cite{SchmidSesterhenn08,Schmid10}, was interpreted by Rowley et al.\ \cite{RowleyEtAl09} as a Koopman operator approximation technique, and is now perhaps the most popular technique for Koopman spectral analysis. It is closely related to linear inverse models \cite{Penland89,TuEtAl14}, whereby the Koopman operator is approximated by a finite-rank operator in a dictionary of linear observables. The raw DMD method provides limited convergence guarantees to the infinite-dimensional Koopman operator, and it has been modified and generalized in many ways since its inception. Among these modifications is the extended DMD (EDMD) technique of Williams et al.~\cite{WilliamsEtAl15}, which replaces the linear DMD dictionary by more general dictionaries of nonlinear observables to derive Galerkin methods for approximation of the Koopman operator on  $L^2$. The use of generalized dictionaries considerably increases the potential approximation power of the method, though the original EDMD formulation offered limited explicit mechanisms to control the asymptotic behavior of the approximation as the dictionary size increases. This becomes an especially pertinent issue in dynamical systems with invariant measures supported on geometrically complex, potentially unknown sets (e.g., nonlinear manifolds or fractal attractors), where it is not obvious how to choose a dictionary that will lead to a well-conditioned EDMD approximation.

Recently, Colbrook and Townsend \cite{ColbrookTownsend24} proposed a DMD-type algorithm called residual DMD (ResDMD) that employs a residual-based approach to eliminate spurious eigenvalues resulting, e.g., from spectral pollution or ill-conditioned EDMD dictionaries. Combined with a framework for approximation of the Koopman resolvent using high-order quadrature, ResDMD is also able to compute smoothed approximations of the spectral measure of the Koopman operator with convergence guarantees. Other variants of DMD such as mpDMD \cite{Colbrook23} and piDMD \cite{BadooEtAl23} preserve unitarity of the Koopman operator on $L^2$ for measure-preserving invertible transformations. Earlier work of Govindarajan et al.\ \cite{GovindarajanEtAl21} developed approximations of the Koopman operator based on periodic approximations of the underlying state space dynamics; such Koopman operator approximations are automatically unitary. In the paper \cite{KordaEtAl20}, Korda, Putinar, and Mezi\'c developed a technique that employs the Christoffel-Darboux kernel in spectral space to consistently approximate the spectral measure of the Koopman operator for measure-preserving systems.

On the transfer operator side, popular approximation techniques are based on the Ulam method \cite{Ulam64}. The Ulam method creates a finite-rank approximation of the transfer operator by partitioning the state space into a finite collection of subsets, and building a Markov matrix from associated transition probabilities under the dynamics. Essentially, the method can be viewed as a Galerkin approximation in subspaces of $L^p$ spanned by piecewise-constant functions. An advantage of the Ulam method is that it is positivity preserving; that is, the finite-rank approximate operator maps positive functions to positive functions which is a key structural property of transfer operators. The Ulam method has been shown to yield spectrally consistent approximations for particular classes of systems such as expanding maps and Anosov diffeomorphisms on compact manifolds \cite{Froyland97}. In some cases, spectral computations from the Ulam method have been shown to recover eigenvalues of transfer operators on anisotropic Banach spaces adapted to the expanding/contracting subspaces of such systems \cite{BlankEtAl02}; however, these results depend on carefully chosen state space partitions that may be hard to construct in high dimensions and/or under unknown dynamics. Various modifications of the basic Ulam method have been proposed that are appropriate for high-dimensional applications; e.g., set-oriented methods \cite{DellnitzJunge99} and sparse grid techniques \cite{JungeKoltai09}. Recent techniques for transfer operator approximation have employed smoothing by kernel integral operators \cite{FroylandEtAl21} and regularization of optimal transport plans \cite{KoltaiEtAl21,JungeEtAl22}.

Another family of approaches has emphasized the use of data-driven dictionaries for Koopman/transfer operator approximation. Among these is a technique called diffusion forecasting \cite{BerryEtAl15}, which approximates Koopman and transfer operators (or their Kolmogorov/Fokker Planck analogs in stochastic systems) using a data-driven basis of $L^2$ obtained via the diffusion maps algorithm \cite{CoifmanLafon06}. Leveraging spectral convergence results for kernel integral operators \cite{TrillosEtAl20,VonLuxburgEtAl08}, this approach was shown to produce a well-conditioned consistent approximation as the amount of training data and basis function increases, thus avoiding issues associated with dictionary choice in \emph{ad hoc} applications of EDMD. In \cite{GiannakisEtAl15,Giannakis19}, the diffusion forecasting technique was adopted in a framework that approximates the generator $V$ of measure-preserving ergodic flows on manifolds by an advection--diffusion operator $L_\tau = V - \tau \Delta$, where $\tau$ is a regularization parameter and $\Delta$ is a Laplace-type diffusion operator. Using a normalized diffusion maps basis appropriate for $H^1$ Sobolev regularity, a Galerkin method was developed for the eigenvalue problem of $L_\tau$ which was found to perform well for systems with pure point spectrum such as ergodic rotations on tori. While promising numerical results were also obtained for mixing systems, \cite{GiannakisEtAl15,Giannakis19} did not address the question of consistent approximation of the continuous Koopman spectrum.

Arguably, the most straightforward case for analyzing the spectral properties of diffusion-regularized generators such as  $L_\tau$ occurs when the regularizing operator $\Delta$ commutes with $V$. The papers \cite{DasGiannakis19,Giannakis19} showed that such a commuting operator $\Delta$ can be obtained from the infinite-delay limit of a family of kernel integral operators constructed using delay-coordinate maps \cite{SauerEtAl91,Takens81}. This result indicates that incorporating delays is useful for improving the efficiency of Koopman eigenfunction approximation in data-driven bases, though the infinite-delay limit was found to be trivial if $V$ is the generator of a weak-mixing system. Earlier work \cite{BerryEtAl13,BruntonEtAl17,GiannakisMajda12a} had independently explored incorporating delay coordinates (possibly with weights \cite{BerryEtAl13}) in diffusion maps and Koopman operator approximation techniques and found that the approach helps reveal slowly decorrelating observables under complex dynamics. Further asymptotic analysis \cite{Giannakis21a} related such slowly-decorrelating observables to the $\epsilon$-approximate Koopman spectrum.

The methods outlined above perform approximation of Koopman operators on $L^p$ spaces, oftentimes associated with invariant measures. In recent years, a distinct line of work has emerged that focuses on approximation of Koopman operators on RKHSs \cite{AlexanderGiannakis20,DasGiannakis20,DasEtAl21,IkedaEtAl22,Kawahara16,KlusEtAl20b,KosticEtAl22,RosenfeldEtAl19,RosenfeldEtAl22}. Unlike $L^2$ spaces whose elements are equivalence classes of functions, RKHSs are function spaces with continuous pointwise evaluation functionals. This allows leveraging techniques from learning theory \cite{CuckerSmale01} to perform spectral analysis and forecasting of observables. At the same time, a challenge with RKHS techniques is that, aside from special cases such as translations on groups, a general RKHS $\mathcal H$ is not invariant under the action of the Koopman operator, making the choice of reproducing kernel a delicate matter. At a minimum, the kernel should be chosen such that the Koopman operator on $\mathcal H$ is densely defined and closable. Provided that such a kernel can be found (which is, in general, non-trivial \cite{IkedaEtAl22}), the Koopman operator on $\mathcal H$ will typically be unbounded, introducing theoretical and numerical complications.

\subsection{Reproducing kernel Hilbert space compactification}
\label{sec:rkhs-compactification}

As a motivation of our approach, we summarize in more detail the RKHS-based Koopman approximation framework proposed in \cite{DasEtAl21}. Rather than identifying RKHSs tailored to particular dynamical systems, \cite{DasEtAl21} employed a different approach that starts from a one-parameter family of reproducing kernels  $k_\tau$, $\tau>0$, satisfying mild regularity assumptions, and uses the corresponding integral operators to perturb the Koopman generator such that it is a compact operator on the corresponding RKHS. In more detail, for a continuous-time, measure-preserving ergodic flow on a state space $X$ with invariant measure $\mu$ and generator $V$ on $L^2(\mu)$, the compactification approach of \cite{DasEtAl21} builds a one-parameter family of skew-adjoint compact operators $W_\tau : \mathcal H_\tau \to \mathcal H_\tau$, where $W_\tau = P_\tau V P_\tau^*$ and $P_\tau : L^2(\mu) \to \mathcal H_\tau$ is an integral operator mapping into the RKHS $\mathcal H_\tau$,
\begin{equation*}
    P_\tau f = \int_X p_\tau(\cdot,x) f(x)\,d\mu(x).
\end{equation*}
The operators $W_\tau$ turn out to be unitarily equivalent to $V_\tau = G_\tau^{1/2} V G_\tau^{1/2}$, $ G_\tau \equiv P_\tau^* P_\tau$, acting on $L^2(\mu)$. The latter, were shown to be compact, skew-adjoint operators converging in strong resolvent sense (and thus spectrally) to the generator $V$ as $\tau \to 0$.

The compactness of $V_\tau$ and $W_\tau$ makes these operators amenable to finite-rank approximation by projection onto finite-dimensional subspaces of $L^2(\mu)$ and $\mathcal H_\tau$, respectively, spanned by eigenvectors of $G_\tau$. Eigendecomposition of the projected operators then yields approximate Koopman eigenvalues and eigenfunctions which were shown to lie in the $\epsilon$-approximate point spectrum of the Koopman operator for a tolerance $\epsilon$ that depends on an RKHS-induced Dirichlet energy functional. In particular, eigenfunctions of the projected $V_\tau$ or $W_\tau$ with small Dirichlet energy as $\tau \to 0$ were shown to be approximately cyclical, slowly decorrelating observables under potentially mixing dynamics. Moreover, being elements of RKHSs, the eigenfunctions of $W_\tau$ can be used in conjunction with the Nystr\"om method in supervised-learning models for predicting the evolution of observables. Since $V_\tau$ and $W_\tau$ are skew-adjoint operators, the associated evolution operators $e^{t V_\tau}$ and $e^{t W_\tau}$ are automatically unitary on $L^2(\mu)$ and $\mathcal H_\tau$, respectively. Unitarity of $e^{t V_\tau}$ and $e^{tW_\tau}$ is structurally consistent with unitarity of the unperturbed Koopman operator $U^t = e^{tV}$.

Yet, despite these attractive features, the RKHS compactification schemes of \cite{DasEtAl21} suffer from two potential shortcomings. First, the compactness and skew-adjointness of $V_\tau$ and $W_\tau$ means that the spectra of these operators are confined to bounded intervals in the imaginary line containing 0. This leads to a high concentration of eigenvalues and frequent eigenvalue crossings as $\tau$ is varied (see, e.g., \cite[Fig.~3]{DasEtAl21}), contributing to sensitivity of the spectra of $V_\tau$/$W_\tau$ with respect to $\tau$, and hindering the identification of continuous curves of eigenvalues and eigenfunctions. Note that  advection--diffusion operators of the type $L_\tau$ from \cref{sec:literature} do not suffer from this issue since they are unbounded operators whose spectra are not confined to the imaginary line (though note that these methods rely on the support of the invariant measure having sufficient regularity to appropriately define a diffusion operator $\Delta$). Another possible shortcoming of \cite{DasEtAl21} pertains to the way the action of the generator $V$ is approximated in data-driven settings. Their approach utilizes finite-difference schemes applied to time-ordered data, and while the error of these approximations can be controlled in the limit of vanishing sampling interval using RKHS regularity, finite-differencing is generally detrimental to noise robustness. Avoiding a finite-difference approximation would also facilitate the processing of data at variable time intervals.

\subsection{Contributions of this work}

\begin{figure}
    \centering
    \includegraphics[width=0.5\textwidth]{"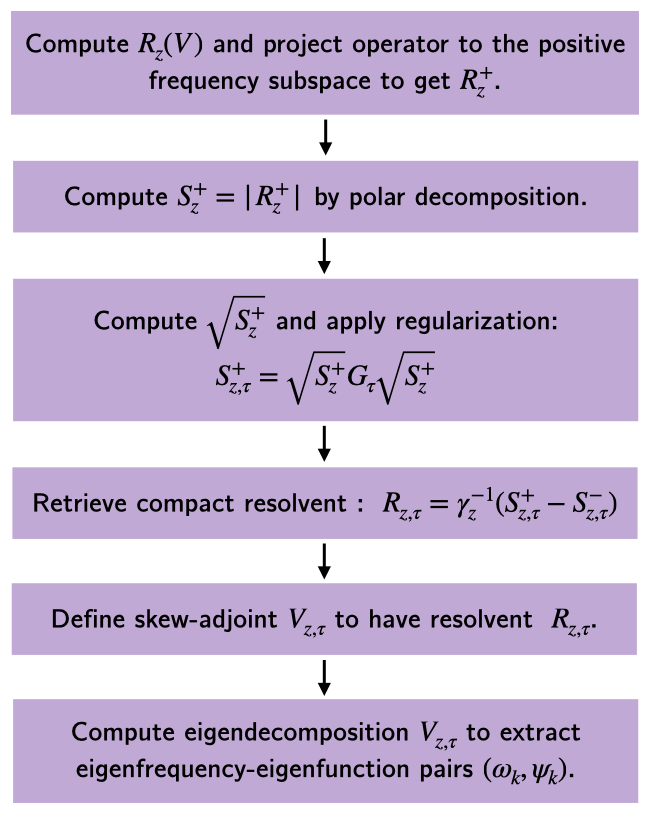"}
    \caption{Overview of the resolvent compactification scheme for spectral approximation of the Koopman generator $V$ described in the paper.}
    \label{fig:schemeoverview}
\end{figure}

Combining ideas from \cite{DasEtAl21} and \cite{SusukiEtAl21}, we develop a data-driven Koopman spectral analysis technique for continuous-time, measure-preserving ergodic flows that is based on a compactification of the resolvent $R_z(V)$ of the generator. The primary distinguishing features of this approach are outlined below. \Cref{fig:schemeoverview} summarizes the main steps of the scheme in a flowchart. %We provide an overview of the ``compactification'' scheme in \cref{fig:schemeoverview} that is developed in the following sections.

\begin{enumerate}
    \item \emph{Spectral convergence}. By smoothing $R_z(V)$ with kernel integral operators $G_\tau$, $\tau>0$, we build a family of compact operators $R_{z,\tau} : L^2(\mu) \to L^2(\mu)$ that converge strongly to $R_z(V)$. For any nonzero real number $z$, the $R_{z,\tau}$ are resolvents of skew-adjoint unbounded operators $V_{z,\tau}$ on $L^2(\mu)$ whose spectral measures converge strongly to $V$ (see \cref{thm:main}). By standard properties of the functional calculus, the $V_{z,\tau}$ generate unitary evolution groups $\{U^t_{z,\tau} := e^{t V_{z,\tau}}\}_{t \in \mathbb{R}}$ that converge strongly to the Koopman group, i.e., $\lim_{\tau \to 0^+} U^t_{z,\tau} f = U^t f$ for any observable $f \in L^2(\mu)$ and evolution time $t\in \mathbb R$. These results hold for systems of arbitrary (pure-point, continuous, mixed) spectral characteristics, and do not require differentiable structure for the support of the invariant measure.
    \item \emph{Structure preservation}. The regularized generators $V_{z,\tau}$ preserve two important properties $V$: they are skew-adjoint and unbounded. As just mentioned, skew-adjointness makes these operators amenable to analysis via the spectral theorem and functional calculus for normal operators on Hilbert spaces. Moreover, in our view, unboundedness of $V_{z,\tau}$ represents an improvement over the compactification schemes of \cite{DasEtAl21} as it avoids issues associated with concentration of eigenvalues on bounded intervals containing 0 (see \cref{sec:rkhs-compactification}). Additionally, the unboundedness of $V_{z,\tau}$ can provide a practical benefit in the analysis of Koopman eigenfrequencies obtained from a given dynamical system. As eigenfrequencies do not cluster at zero, it can be easier to identify and interpret the lower frequencies that may be of interest.
    \item \emph{Consistent data-driven approximation.} We approximate $V_{z,\tau}$ by finite-rank, skew-adjoint operators acting on eigenspaces of the smoothing operators $G_\tau$. In data-driven applications, the finite-rank operators are represented in a well-conditioned basis of kernel eigenvectors computed from trajectory data with convergence guarantees in the large-data limit. In particular, the use of kernel eigenvectors avoids issues due to ad hoc choice of dictionaries, and the basis vectors are well-adapted to invariant measures supported on geometrically complex sets (e.g., fractal attractors) embedded in high-dimensional ambient spaces. Other approximation steps include numerical quadrature to approximate the resolvent integral in \eqref{eq:resolventintegral} and low-rank projection to constrain the numerically approximated eigenfunctions of $V_{z,\tau}$ to either positive- or negative-frequency subspaces of $L^2(\mu)$. The scheme converges in the successive limits of increasing number of samples $N$, integral quadrature nodes $Q$, kernel eigenvectors $L$, and rank parameter $M \leq L$, followed by decreasing the regularization parameter $\tau$ to 0.
    \item \emph{No finite differencing.} Through the use of the integral representation \eqref{eq:resolventintegral} for the resolvent, our approach obviates the need to perform finite-difference approximations of the generator (as done, e.g., in \cite{GiannakisEtAl15,Giannakis19,DasEtAl21}). Rather than being directly dependent on the temporal sampling interval of the data, which is oftentimes difficult to control in practical applications, our approximation utilizes numerical quadrature methods for evaluating the integral for $R_z(V)$ in \eqref{eq:resolventintegral}. This provides a flexible framework for controlling approximation accuracy by varying $z$ in relation to the sampling interval and the timespan of the training data. Additionally, the integral representation of the resolvent opens the possibility of using recently developed high-order resolvent-based schemes for approximation of spectral measures \cite{ColbrookEtAl21}.
    \item \emph{Identification of slowly decorrelating observables.} Eigendecomposition of $V_{z,\tau}$ reveals slowly decorrelating observables of the dynamical system and their associated characteristic frequencies. When the generator $V$ possesses non-trivial point spectrum, the spectra of $V_{z,\tau}$ contain sequences of eigenvalues converging to each of the eigenvalues of $V$ (including in situations when the point spectrum of $V$ is not discrete), and the corresponding eigenspaces of $V_{z,\tau}$ converge to Koopman eigenspaces. When the spectrum of $V$ has a non-trivial continuous component (i.e., the span of its eigenvectors is not dense in $L^2(\mu)$) the spectra of $V_{z,\tau}$ contain eigenfunctions that behave as approximate Koopman eigenfunctions. These eigenfunctions reveal slowly decorrelating observables under potentially mixing dynamics. In \cref{sec:examples}, we present numerical experiments illustrating the properties of these eigenfunctions in systems with pure point spectrum (ergodic rotation and skew-rotation on $\mathbb T^2$), and mixing (Lorenz 63 (L63) system).
\end{enumerate}

\subsection{Plan of the paper}
In \cref{sec:problem} we introduce the class of dynamical systems under study and state the problem of spectral approximation of Koopman and transfer operators that will be the focus of the paper. In \cref{sec:theory}, we describe our mathematical framework for Koopman spectral analysis by resolvent compactification, and state and prove our main theoretical results. In \cref{sec:finite-rank}, we construct finite-rank approximations of our compactification scheme. This is followed by a description of its data-driven numerical implementation in \cref{sec:numimplement}. Section~\ref{sec:examples} presents our numerical experiments to the ergodic torus rotation, skew rotation, and L63 system. The main text ends in \cref{sec:conclusions} with a brief conclusory discussion. \ref{app:markov} contains an overview of the kernel construction employed in the operators $G_\tau$. \ref{app:eigf} contains plots of analytically known Koopman eigenfunctions on the torus for comparison with the numerical results in \cref{sec:examples}.

\section{Problem statement}
\label{sec:problem}

In this section, we introduce the class of dynamical systems (\cref{sec:dynamics}) and the Koopman spectral analysis problem we wish to study (\cref{sec:spectral-ergodic,sec:spectral-approx}). Aside from a few exceptions, which we will explicitly point out, our setup and notation follow closely \cite{DasEtAl21}.

\subsection{Dynamical system}
\label{sec:dynamics}

We consider a continuous-time, continuous flow $\Phi^t : \mathcal M \to \mathcal M$, $t \in \mathbb R$, on a metric space $\mathcal M$. The flow is assumed to have an ergodic invariant Borel probability measure $\mu$ with compact support $X\subseteq \mathcal M$. We also assume that there is a compact set $M \subseteq \mathcal M$ which is forward-invariant (i.e., $\Phi^t(M) \subseteq M$ for any $t \geq 0$) and contains $X$. As noted in \cite{DasEtAl21}, many dynamical systems encountered in applications belong in this class, including ergodic flows on compact manifolds with regular invariant measures (in which case $\mathcal{M} = M = X$), dissipative ordinary differential equations on noncompact manifolds (e.g., the L63 system \cite{Lorenz63}, where $\mathcal{M}=\mathbb R^3$, $M$ is an absorbing ball \cite{LawEtAl15}, and $X$ a fractal attractor \cite{Tucker99}), and dissipative partial equations with inertial manifolds \cite{ConstantinEtAl89} (where $\mathcal{M}$ is an infinite-dimensional function space). We should point out that \cite{DasEtAl21} required that the compact set $M$ be a $C^1$ manifold and that the restricted flow $\Phi^t|_M$ is $C^1$. These conditions were introduced in order to ensure the well-definition of certain approximations involving the generator (whose domain includes $C^1$ functions). As mentioned in \cref{sec:intro}, the approximations developed in this work do not require direct access to the generator, and thus we may drop $C^1$ regularity assumptions.

The flow $\Phi^t$ induces a group of unitary Koopman operators on the Hilbert space $H=L^2(\mu)$ that act by composition, $U^t f = f \circ \Phi^t$. By Stone's theorem on strongly continuous 1-parameter unitary groups \cite{Stone32}, the Koopman group $U = \{ U^t \}_{t \in \bbR}$ has a skew-adjoint generator $V: D(V) \to H$ defined on a dense subspace $D(V) \subseteq H$ as
\begin{equation*}
V f = \lim_{t\to 0} \frac{U^t f - f}{t},
\end{equation*}
where the limit is taken in the $L^2(\mu)$ norm. The generator completely characterizes the Koopman group, in the sense that any other strongly continuous one-parameter group on $H$ with generator $V$ is identical to $U$. Moreover, $V$ reconstructs the Koopman operator at any time $t \in \mathbb R$ by exponentiation, $U^t = e^{tV}$, defined via the Borel functional calculus.

We note that the adjoint of $U^t:H \to H$ coincides with the time-$t$ transfer operator $P^t : H \to H$, $ P^t f = f \circ \Phi^{-t}$; that is, $(U^t)^* = U^{-t} = P^t$. Thus, working with the Koopman vs.\ transfer operator on $H$ is merely a matter of convention, though it should be kept in mind that in other function spaces and/or under non-unitary dynamics the Koopman and transfer operator families are generally different. For detailed expositions of operator-theoretic aspects of ergodic theory we refer the reader to \cite{Baladi00,EisnerEtAl15}.

In what follows, $\langle f, g\rangle = \int_X f^* g \, d\mu$ and $\norm{f}_H = \langle f, f\rangle$ are the standard inner product and norm of $H$, respectively, where the inner product is taken conjugate-linear in the first argument. Moreover, $\bm 1 : \mathcal M \to \mathbb R$ is the function on $\mathcal M$ equal to 1 everywhere, and $\tilde H = \{ f \in H: \langle \bm 1, f \rangle \equiv \int_{\mathcal M} f \,d\mu = 0\}$ is the closed subspace of $H$ consisting of zero-mean functions with respect to the invariant measure. We equip the space of continuous functions on $M$ with the standard Banach space norm $\lVert f \rVert_{C(M)} = \max_{x\in M}\lvert f(x)\rvert$, and define the closed subspace $\tilde C(M) = \{ f \in C(M): \int_M f \, d\mu = 0 \}$ analogously to $\tilde H \subset H$. We define $\iota : C(M) \to H$ as the bounded linear map that maps continuous functions on $M$ into their corresponding equivalence classes in $H$.  We will denote the space of bounded linear maps on a Banach space $\mathbb E$ as $B(\mathbb E)$, and $\lVert A\rVert$ will be the operator norm of $A \in B(\mathbb E)$. Limits in the norm topology and strong operator topology of $B(\mathbb E)$ will be denoted as $\nto$ and $\sto$, respectively. Given an operator $A: D(A) \to \mathbb E$ defined on a subspace $D(A) \subseteq \mathbb E$, $\rho(A)$ and $\sigma(A)$ will denote the resolvent set and spectrum of $A$, respectively. Given a topological space $\mathbb X$, $\mathcal B(\mathbb X)$ will denote the Borel $\sigma$-algebra generated by the open subsets of $\mathbb X$.

\subsection{Spectral decomposition of measure-preserving ergodic flows}
\label{sec:spectral-ergodic}

By the spectral theorem for skew-adjoint operators, the spectrum of the generator $V$ is a closed subset of the imaginary line, $\sigma(V) \subseteq i \mathbb R$, and there exists a projection-valued measure (PVM) $E: \mathcal B(i \mathbb R) \to B(H)$ decomposing the generator and Koopman operators via the spectral integrals
\begin{equation}
    \label{eq:specUV}
    V = \int_{i\mathbb R} \lambda\, dE(\lambda), \quad U^t = \int_{i \mathbb R} e^{t\lambda}\, dE(\lambda).
\end{equation}
A fundamental result in ergodic theory \cite{Halmos56} states that there is a $U^t$-invariant splitting $H = H_p \oplus H_c$ into orthogonal subspaces $H_p$ and $H_c$ and a decomposition $E = E_p + E_c$ into PVMs $E_p: \mathcal B(i \mathbb R) \to B(H_p)$ and $E_c: \mathcal B(i \mathbb R) \to B(H_c)$, where $E_p$ is discrete (i.e., there is a countable set $\sigma_p(V) \subseteq \sigma(V)$ such that $E_p(i\mathbb R\setminus \sigma_p(V)) = 0$), and $E_c$ is continuous, (i.e., $E_c$ has no atoms).

The set $\sigma_p(V)$ consists of the eigenvalues of $V$ which are also atoms of $E_p$; that is, given $\lambda \in i \mathbb R$ we have $E_p(\{\lambda\}) \neq 0$ iff $\lambda \in \sigma_p(V)$, in which case there is a corresponding eigenvector $\psi \in H$ such that $V\psi= \lambda\psi$. Correspondingly, $H_p$ admits an orthonormal basis $\{\psi_l\}_{l\in \mathbb Z}$ consisting of generator eigenfunctions that satisfy $V\psi_l = i \omega_l \psi_l$ for $i\omega_l \in \sigma_p(V)$. We call the real numbers $\omega_l$ eigenfrequencies. Since $V$ is a real operator, i.e., $(V f)^* = V(f^*)$ for every $f \in D(V)$, it is natural to order the eigenvectors and corresponding eigenfrequencies such that $\psi_{-l} = \psi_l^*$ and $\omega_{-l} = - \omega_l$. Moreover, since $\mu$ is an ergodic invariant measure under $\Phi^t$, every eigenvalue $i\omega_l$ is simple.

From~\eqref{eq:specUV}, we see that the Koopman evolution of an observable $f\in H_p$ is expressible as
\begin{equation}
    \label{eq:quasiperiodic}
    U^t f = \sum_{l \in \mathbb Z} e^{i\omega_l t} c_l \psi_l, \quad c_l = \langle \psi_l, f\rangle,
\end{equation}
where the summands are periodic in time $t$ with period $2\pi/\omega_l$. It should be noted that the discreteness of $E_p$ does not imply that $\sigma_p(V)$ is a discrete set; in fact, so long as $\sigma_p(V)$ contains two rationally independent elements (such as in the case of an ergodic rotation on the 2-torus; see~\eqref{eq:omega-phi_rot}), $\sigma_p(V)$ is a dense subset of the imaginary line. As noted in \cref{sec:literature}, this presents numerical challenges and typically necessitates some form of regularization of $V$ for stable computation of the eigenvalues in $\sigma_p(V)$ and the corresponding eigenfunctions. See \cite[figure~5]{DasEtAl21} for an illustration of poor behavior of naive discretizations of the generator of an ergodic rotation on $\mathbb T^2$ that occur even even when using a well-conditioned Fourier basis.

Unlike the quasiperiodic evolution from~\eqref{eq:quasiperiodic}, observables in the continuous spectrum subspace $H_c$ exhibit a decay of correlations characteristic of weak-mixing dynamics. Specifically, defining the temporal cross-correlation function $C_{fg}: \mathbb R \to \mathbb C$ of two observables $f,g \in H$ as $C_{fg}(t) = \langle f, U^t g\rangle$, it can be shown that if $g$ lies in $H_c$,
\begin{equation}
    \label{eq:weak-mixing}
    \lim_{T\to \infty} \frac{1}{T} \int_0^T \lvert C_{fg}(t)\rvert \, dt = 0, \quad \forall f \in H;
\end{equation}
see, e.g., the proof of the Mixing Theorem in \cite[p.~39]{Halmos56}. If the system is weak-mixing, i.e., $\lim_{T\to\infty}\frac{1}{T} \int_0^T  \lvert \mu(A \cap \Phi^{-t}(B)) - \mu(A)\mu(B)\rvert \, dt = 0$ for any two sets $A, B \in \mathcal B(X)$, then we have $H_c = \tilde H$ and~\eqref{eq:weak-mixing} holds for any mean-zero observable $g \in \tilde H$.

\subsection{Spectral approximation by operators with discrete spectrum}
\label{sec:spectral-approx}

Equation~\eqref{eq:weak-mixing}, in conjunction with the fact that $H_c$ has, by definition, no basis of Koopman eigenfunctions, raises the question of what should be appropriate generalizations of the Koopman eigenfunction basis of $H_p$ that are able to capture the evolution of arbitrary observables in $H$ (including, in particular, observables in the continuous spectrum subspace $H_c$) in a manner that behaves approximately like the decomposition in~\eqref{eq:quasiperiodic}. Following \cite{DasEtAl21}, we will build such bases by eigendecomposition of a family of operators that are skew-adjoint and diagonalizable (and thus have associated orthonormal eigenfunctions), and converge spectrally to the generator $V$ in a limit of vanishing regularization parameter. To that end, we recall the following notions of convergence of skew-adjoint operators \cite{Chatelin11,Oliveira09}.

\begin{definition}[Convergence of skew-adjoint operators] Let $A: D(A) \to \mathbb H$ be a skew-adjoint operator on a Hilbert space $\mathbb H$ and $A_{\tau} : D(A_{\tau}) \to \mathbb H$ a family of skew-adjoint operators indexed by $\tau$, with resolvents $R_z(A) = (zI - A)^{-1}$ and $R_z(A_\tau) = (zI - A_\tau)^{-1}$ respectively.
    \begin{enumerate}[(i)]
        \item The family $A_\tau$ is said to converge in strong resolvent sense to $A$ as $\tau\to 0^+$, denoted $A_\tau \srto A$, if for some (and thus, every) $z \in \mathbb C \setminus i \mathbb R$ the resolvents $R_z(A_\tau)$ converge strongly to $R_z(A)$; that is, $\lim_{\tau\to 0^+}R_z(A_\tau) f = R_z(A) f$ for every $f\in \mathbb H$.
        \item The family $A_\tau$ is said to converge in strong dynamical sense to $A$ as $\tau\to 0^+$, denoted $A_\tau \sdto  A$ if, for every $t \in \mathbb R$, the unitary operators $e^{tA_\tau}$ converge strongly to $e^{tA}$; that is, $\lim_{\tau\to0^+} e^{tA_\tau} f = e^{tA} f$ for every $f\in \mathbb H$.
    \end{enumerate}
    \label{def:src}
\end{definition}

It can be shown, e.g., \cite[Proposition~10.1.8]{Oliveira09}, that strong resolvent convergence and strong dynamical convergence are equivalent notions. Four our purposes, this implies that if a family of skew-adjoint operators converges to the Koopman generator $V$ in strong resolvent sense, the unitary evolution groups generated by these operators consistently approximate the Koopman group generated by $V$.

Strong resolvent convergence and strong dynamical convergence imply the following form of spectral convergence, e.g., \cite[Proposition~13]{DasEtAl21}.

\begin{theorem} With the notation of \cref{def:src}, let $\tilde E: \mathcal B(i \mathbb R) \to B(\mathbb H)$ and $\tilde E_\tau : \mathcal B(i \mathbb R) \to B(\mathbb H)$ be the spectral measures of $A$ and $A_\tau$, respectively, i.e., $A = \int_{i \mathbb R} \lambda\, d\tilde E(\lambda)$ and $A_\tau = \int_{i \mathbb R} \lambda\, d\tilde E_\tau(\lambda)$. Then, the following hold.
    \begin{enumerate}[(i)]
        \item For every element $\lambda \in \sigma(A)$ of the spectrum of $A$, there exists a sequence $\tau_1, \tau_2, \ldots \searrow 0$ and elements $\lambda_n \in \sigma(A_{\tau_n})$ of the spectra of $A_{\tau_n}$  such that $\lim_{n\to \infty} \lambda_n = \lambda$.
        \item For every bounded continuous function $h: i \mathbb R \to \mathbb C$, as $\tau\to 0^+$ the operators $h(A_\tau) = \int_{i \mathbb R} h(\lambda)\,d\tilde E_\tau(\lambda)$ converge strongly to $h(A) = \int_{i \mathbb R} h(\lambda) \,d\tilde E(\lambda)$.
        \item For every bounded Borel-measurable set $\Theta \in \mathcal B(i \mathbb R)$ such that $\tilde E(\partial \Theta) = 0$ (i.e., the boundary of $\Theta$ does not contain eigenvalues of $A_\tau$), as $\tau\to 0^+$ the projections $\tilde E_\tau(\Theta)$ converge strongly to $\tilde E(\Theta)$.
    \end{enumerate}
    \label{thm:spec-conv}
\end{theorem}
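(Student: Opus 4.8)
The plan is to derive all three items from the strong resolvent convergence $A_\tau \srto A$ hypothesized via \cref{def:src} (equivalently, from the strong dynamical convergence $A_\tau \sdto A$), treating them in the order (ii), then (iii), then (i), each building on the previous.

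For item (ii), I would first prove the statement for $h$ in $C_0(i\mathbb R)$. Fix $z=i$ (any $z\in\mathbb C\setminus i\mathbb R$ works): the $\ast$-subalgebra of $C_0(i\mathbb R)$ generated by $\lambda\mapsto(z-\lambda)^{-1}$ and $\lambda\mapsto(\bar z-\lambda)^{-1}$ separates points of $i\mathbb R$ and vanishes nowhere, hence is dense in $C_0(i\mathbb R)$ by the locally compact Stone--Weierstrass theorem. For any polynomial $p$ in these two generators, $p(A_\tau)$ is the same polynomial in the uniformly bounded, strongly convergent operators $R_z(A_\tau),R_{\bar z}(A_\tau)$, so $p(A_\tau)\sto p(A)$, since finite sums and products of uniformly bounded strongly convergent families converge strongly. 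Because the spectral theorem gives $\lVert h(A_\tau)\rVert\le\lVert h\rVert_\infty$ uniformly in $\tau$, an $\varepsilon/3$ argument upgrades this to $h(A_\tau)\sto h(A)$ for all $h\in C_0(i\mathbb R)$. To pass to general bounded continuous $h$, fix $\psi\in\mathbb H$ and real cutoffs $g_R$ with $0\le g_R\le 1$, $g_R\equiv 1$ on $\{\lvert\lambda\rvert\le R\}$ and $\supp g_R\subseteq\{\lvert\lambda\rvert\le 2R\}$; from $(1-g_R)^2\le 1-g_R^2$ on $[0,1]$ and operator monotonicity, $\lVert(1-g_R)(A_\tau)\psi\rVert^2\le\langle\psi,(1-g_R^2)(A_\tau)\psi\rangle$, which (as $g_R^2\in C_0(i\mathbb R)$) is within $\varepsilon$ of $\langle\psi,(1-g_R^2)(A)\psi\rangle=\lVert\psi\rVert^2-\lVert g_R(A)\psi\rVert^2$ once $\tau$ is small, and the latter is small for $R$ large. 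Since $hg_R\in C_0(i\mathbb R)$, applying the established case to $hg_R$ and using the triangle inequality finishes (ii).

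For item (iii), let $\Theta$ be bounded Borel with $\tilde E(\partial\Theta)=0$, so $\tilde E(\Theta^\circ)=\tilde E(\Theta)=\tilde E(\overline\Theta)$. For small $\varepsilon>0$, use Urysohn's lemma to pick compactly supported continuous $h_\varepsilon^\pm:i\mathbb R\to[0,1]$ with $\chi_{\Theta_\varepsilon}\le h_\varepsilon^-\le\chi_{\Theta^\circ}$ and $\chi_{\overline\Theta}\le h_\varepsilon^+\le\chi_{\Theta^\varepsilon}$, where $\Theta_\varepsilon$ is the inner $\varepsilon$-erosion and $\Theta^\varepsilon$ the open outer $\varepsilon$-neighbourhood of $\Theta$; then $\langle\psi,h_\varepsilon^\pm(A)\psi\rangle\to\langle\psi,\tilde E(\Theta)\psi\rangle$ as $\varepsilon\to 0^+$ by monotone convergence. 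Operator monotonicity gives $h_\varepsilon^-(A_\tau)\le\tilde E_\tau(\Theta)\le h_\varepsilon^+(A_\tau)$ and the analogue for $A$; letting $\tau\to 0^+$ via (ii) traps $\langle\psi,\tilde E_\tau(\Theta)\psi\rangle$ (for $\tau$ small) and $\langle\psi,\tilde E(\Theta)\psi\rangle$ in a common interval of length $o_\varepsilon(1)$, hence $\langle\psi,\tilde E_\tau(\Theta)\psi\rangle\to\langle\psi,\tilde E(\Theta)\psi\rangle$ for every $\psi$. Polarizing, $\tilde E_\tau(\Theta)\psi\rightharpoonup\tilde E(\Theta)\psi$ weakly; since also $\lVert\tilde E_\tau(\Theta)\psi\rVert^2=\langle\psi,\tilde E_\tau(\Theta)\psi\rangle\to\langle\psi,\tilde E(\Theta)\psi\rangle=\lVert\tilde E(\Theta)\psi\rVert^2$, weak convergence together with convergence of norms gives $\tilde E_\tau(\Theta)\psi\to\tilde E(\Theta)\psi$ in norm, i.e.\ $\tilde E_\tau(\Theta)\sto\tilde E(\Theta)$.

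For item (i), argue by contradiction: if $\lambda\in\sigma(A)$ has no approximating sequence of the stated form, there are $\delta>0$ and $\tau_0>0$ with $\mathrm{dist}(\lambda,\sigma(A_\tau))\ge\delta$ for all $\tau<\tau_0$. Choose an open interval $\Theta\ni\lambda$ in $i\mathbb R$ of radius $<\delta$ with $\tilde E(\partial\Theta)=0$ (possible since $\partial\Theta$ is two points and only countably many radii carry $\tilde E$-mass). As $\sigma(A)=\supp\tilde E$, $\tilde E(\Theta)\ne 0$; by (iii), $\tilde E_\tau(\Theta)\sto\tilde E(\Theta)\ne 0$, so $\tilde E_\tau(\Theta)\ne 0$ for small $\tau$, forcing $\sigma(A_\tau)\cap\overline\Theta\ne\emptyset$ and thus $\mathrm{dist}(\lambda,\sigma(A_\tau))<\delta$ --- a contradiction. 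I expect the main obstacle to be the two ``upgrade'' steps: extending (ii) from $C_0(i\mathbb R)$ to arbitrary bounded continuous $h$, where the spectral tail $\lVert(1-g_R)(A_\tau)\psi\rVert$ must be controlled uniformly in small $\tau$ (handled by the majorant $1-g_R^2$, a $C_0$ function up to a constant), and converting the weak convergence of the projections in (iii) into strong convergence, for which the hypothesis $\tilde E(\partial\Theta)=0$ --- giving $\tilde E(\Theta^\circ)=\tilde E(\overline\Theta)$ so that the continuous minorants and majorants pinch $\tilde E(\Theta)$ in the limit --- is essential; the remaining work is bookkeeping with the spectral theorem.
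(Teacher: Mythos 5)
The paper does not prove this theorem at all — it quotes it from \cite[Proposition~13]{DasEtAl21} — so your proposal supplies a self-contained argument where the paper has none. Your route is the standard one and it is correct: Stone--Weierstrass on $C_0(i\mathbb R)$ reduces (ii) to polynomial combinations of the (uniformly bounded, strongly convergent) resolvents; the cutoff trick with $(1-g_R)^2\le 1-g_R^2$ and the uniform bound $\lVert h(A_\tau)\rVert\le\lVert h\rVert_\infty$ pushes this to all bounded continuous $h$; (iii) follows by the Urysohn squeeze $\chi_{\Theta_\varepsilon}\le h_\varepsilon^-\le\chi_{\Theta^\circ}\le\chi_{\overline\Theta}\le h_\varepsilon^+\le\chi_{\Theta^\varepsilon}$ together with $\tilde E(\partial\Theta)=0$, and the weak-to-strong upgrade via $\lVert P\psi\rVert^2=\langle\psi,P\psi\rangle$ for projections is exactly right; and (i) is the spectral-inclusion contrapositive using (iii) and the fact that only countably many radii can carry $\tilde E$-mass.

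One slip in (ii): you write ``Fix $z=i$,'' but here $A$ and $A_\tau$ are \emph{skew}-adjoint with $\sigma(A)\subseteq i\mathbb R$, so $z=i\in i\mathbb R$ is not an admissible resolvent parameter (it may lie in the spectrum); you clearly intend $z\in\mathbb C\setminus i\mathbb R$, e.g.\ $z=1$, as your own parenthetical indicates. With that substitution the $\ast$-algebra you take is generated by $\lambda\mapsto(z-\lambda)^{-1}$ and $\lambda\mapsto(\bar z-\lambda)^{-1}$; being a $\ast$-algebra it automatically also contains the pointwise conjugates $\lambda\mapsto(\bar z+\lambda)^{-1}=-R_{-\bar z}(\lambda)$ (since $\bar\lambda=-\lambda$ on $i\mathbb R$), and all of $\pm z,\pm\bar z$ lie off $i\mathbb R$, so every generator comes from a strongly convergent resolvent and the density argument goes through unchanged. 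The rest of the proposal is correct as written.
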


In what follows, we will build a family of densely-defined operators $V_{z,\tau}: D(V_{z,\tau}) \to H$, $D(V_{z,\tau}) \subset H$, parameterized by $z,\tau > 0$ with the following properties.
\begin{enumerate}[label=(P\arabic*)]
    \item \label[prop]{prop:p1} $V_{z,\tau}$ is skew-adjoint.
    \item \label[prop]{prop:p2} $V_{z,\tau}$ has compact resolvent.
    \item \label[prop]{prop:p3} $V_{z,\tau}$ is real, $(V_{z,\tau} f)^* = V_{z,\tau}(f^*)$ for all $f\in D(V)$.
    \item \label[prop]{prop:p4} $V_{z,\tau}$ annihilates constant functions, $V_{z,\tau} \bm 1 = 0$.
    \item \label[prop]{prop:p5} For each $z>0$, $V_{z,\tau} \srto V$ as $\tau\to 0^+$.
\end{enumerate}
By \cref{prop:p1,prop:p2}, the spectrum of $V_{z,\tau}$ consists entirely of isolated eigenvalues with finite multiplicities, and there is an orthonormal basis $ \{ \psi_{l,\tau} \}_{l \in \mathbb Z}$ of $H$ consisting of eigenfunctions,
\begin{equation}
    V_{z,\tau} \psi_{l,\tau} = i\omega_{l,\tau} \psi_{l,\tau},
    \label{eq:vzt-eig}
\end{equation}
where $\omega_{l,\tau} \in \mathbb R$ are eigenfrequencies such that $\sigma(V_{z,\tau}) = \{ i \omega_{l,\tau} \}_{l \in \mathbb Z}$. Moreover, by \ref{prop:p3} and \ref{prop:p4}, the eigenfunctions and eigenfrequencies can be chosen such that
\begin{equation}
    \omega_{-l,\tau} = - \omega_{l,\tau}, \quad \psi_{-l,\tau} = \psi^*_{l,\tau}, \quad \omega_{0,\tau} = 0, \quad \psi_{0,\tau} = \bm 1.
    \label{eq:omega-psi}
\end{equation}
Note the similarity with the corresponding properties of Koopman eigenfunctions $\psi_l$ and eigenfrequencies $\omega_l$ from \cref{sec:spectral-ergodic}. Finally, by \cref{prop:p5} and the equivalence of strong resolvent convergence and strong dynamical convergence, for any $f \in H$ we have,
\begin{equation}
    \label{eq:approx-koopman}
    U^t f = \lim_{\tau \to 0^+} \sum_{l \in \mathbb Z} e^{i\omega_{l,\tau} t} c_{l,\tau} \psi_{l,\tau}, \quad c_{l,\tau} = \langle \psi_{l,\tau}, f\rangle.
\end{equation}
We view~\eqref{eq:approx-koopman} as a generalization of the Koopman evolution of observables in the point spectrum subspace $H_p$ in~\eqref{eq:quasiperiodic}.

\begin{remark}[Spectral exactness]
    \label{rk:spectral_pollution}Two important notions in spectral approximation of linear operators are spectral inclusion and spectral exactness. Following \cite{BaileyEtAl93,Bogli17}, we will say that a family of operators $(A_n)_{n \in \mathbb N}$ with $A_n : D(A_n) \to \mathbb E$ on a Banach space $\mathbb E$ approximating an operator $A : D(A) \to \mathbb E$ is spectrally inclusive if for every $\lambda \in \sigma(A)$ there is a sequence of elements $\lambda_n \in \sigma(A_n)$ such that $ \lim_{n \to \infty} \lambda_n = \lambda$. We will say that the approximation $A_n$ has spectral pollution if there exists a complex number $\lambda \not\in \sigma(A)$ (called here a spurious eigenvalue) and an infinite subset $I \subseteq \mathbb N$ such that $\lambda_n \to \lambda$ for $n \in I$. On the basis of \cref{thm:spec-conv}(i), our spectral approximation of the generator $V$ by the family of operators $V_{z,\tau}$ is spectrally inclusive. However, it is not necessarily free of spurious eigenvalues, and this remains a possible avenue for algorithm improvement.
\end{remark}

\section{Resolvent compactification framework}
\label{sec:theory}

In this section, we describe a procedure for building the operator family $V_{z,\tau}$ that satisfies \crefrange{prop:p1}{prop:p4} laid out in \cref{sec:spectral-approx}. Central to our construction will be the use of kernel integral operators to compactify the resolvent of $V$; we introduce these operators and state their properties in \cref{sec:kernel}. Our approach will make use of an orthogonal splitting of $H$ into subspaces associated with negative, zero, and positive frequencies; we describe this splitting in \cref{sec:freq}. In \cref{sec:mainalg}, we describe the resolvent compactification procedure leading to $V_{z,\tau}$, and in \cref{sec:proof} we state and prove our main theoretical result, \cref{thm:main}. In \cref{sec:pseudospec}, we discuss a procedure for identifying slowly decorrelating observables from the eigenfunctions of $V_{z,\tau}$, and establish connections between the corresponding eigenvalues and the approximate point spectrum of the Koopman operator. We note that for the purposes of our compactification scheme, the resolvent parameter $z$ will be required to be strictly positive, rather than an arbitrary complex number in the resolvent set of $V$.

\subsection{Kernel integral operators}
\label{sec:kernel}

We consider a family of kernel functions $p_\tau: \mathcal M \times \mathcal M \to \mathbb R$, $\tau > 0$, with the following properties.
\begin{enumerate}[label=(K\arabic*)]
    \item \label[prop]{prop:K1} $p_\tau$ is measurable, and it is continuous on $M\times M$.
    \item \label[prop]{prop:K2} $p_\tau$ is Markovian with respect to $\mu$; i.e., $p_\tau \geq 0$ and for every $x\in M$ the normalization condition $\int_X p_\tau(x,\cdot)\,d\mu = 1$ holds.
    \item \label[prop]{prop:K3} The integral operators $G_\tau : H \to H$ with $G_\tau f = \int_X p_\tau(\cdot, x)f(x) \, d\mu(x)$ form a strongly continuous semigroup; i.e., $G_\tau G_{\tau'} = G_{\tau+\tau'}$ for every $\tau,\tau' > 0$ and $\lim_{\tau\to 0^+} G_\tau f = f$ for every $f \in H$.
    \item \label[prop]{prop:K4} $G_\tau$ is strictly positive; i.e., $\langle f, G_\tau f \rangle > 0$ for every nonzero $f \in H$.
    \item \label[prop]{prop:K5} $G_\tau$ is ergodic; i.e., $G_\tau f = f$ for all $\tau > 0$ iff $f$ is constant $\mu$-a.e.
\end{enumerate}
Note that every such kernel $p_\tau$ is necessarily symmetric, $p_\tau(x, x') = p_\tau(x', x)$, and the corresponding integral operator $G_\tau$ is self-adjoint and of trace class. Examples of kernels meeting these conditions are heat kernels on manifolds and translation-invariant kernels on compact abelian groups induced by positive functions on the dual group; e.g., \cite{DasGiannakis23}. The paper \cite{DasEtAl21} describes a procedure that derives the family $p_\tau$ from a suitable un-normalized kernel $k: \mathcal M \times \mathcal M \to \mathbb R_+$ (e.g., a Gaussian kernel on $\mathcal M=\mathbb R^d$, $k(x,x') = e^{-\lVert x - x'\rVert^2 / \epsilon^2}$) through a sequence of normalization steps. We summarize this approach in \cref{app:markov} as we will use it in our numerical experiments in \cref{sec:examples}.

\begin{remark}
    Ref.~\cite{DasEtAl21} required, in addition to the properties listed above, that the kernel $p_\tau$ is positive-definite, i.e., for any finite collection of points $x_1, \ldots,x_n \in \mathcal M$ the $n\times n$ kernel matrix $\bm G = [G_{ij}]$ with elements $G_{ij} = [p_\tau(x_i, x_j)]$ is positive. This condition implies that $p_\tau$ has an associated RKHS $\mathcal H_\tau$ which can be used to define compact approximations of the generator (see \cref{sec:rkhs-compactification}). Here, our focus is on approximations of the generator acting on $L^2(\mu)$, which do not require positive definiteness of $p_\tau$. See \cref{sec:conclusions} for a discussion of possible extensions of our scheme to RKHS-based approximations when the kernel $p_\tau$ is positive-definite.
\end{remark}

For our purposes, the importance of Markovianity and ergodicity of $G_\tau$ is that these operators are contractive and share with the Koopman group $U^t$ the subspace of $H$ spanned by $\mu$-a.e.\ constant functions as a one-dimensional common eigenspace. The latter implies, in particular, that the subspace $\tilde H \subset H$ of zero-mean functions is invariant under $G_\tau$, just as it is under $U^t$. The importance of the continuity of $p_\tau$ on $M \times M$ and strict positivity of $G_\tau$ lies in the fact that every eigenfunction of these operators has a continuous representative. Specifically, if $\phi_j \in H$ is an eigenfunction satisfying $G_\tau \phi_j = \lambda_j^\tau \phi_j$, where the eigenvalue $\lambda_j^\tau$ is strictly positive, it follows by continuity of $p_\tau$ and compactness of $X$ that the function $\varphi_j : M \to \mathbb C$ with $\varphi_j(x) = \lambda_j^{-\tau} \int_X p_\tau(x,\cdot) \phi_j \, d\mu$ is a continuous representative of the $L^2(\mu)$ element $\phi_j$. As a result, associated with the semigroup $G_\tau$ is an orthonormal basis $ \{ \phi_0, \phi_1, \ldots \}$ of $H$ with representatives $\varphi_j \in C(M)$ and strictly positive corresponding eigenvalues $\lambda_0^\tau = 1 > \lambda_1^\tau \geq \lambda_2^\tau \geq \cdots \searrow 0^+$ of finite multiplicity. Henceforth, we will choose the basis functions such that every $\phi_j$ is real and $\phi_0 = 1$ $\mu$-a.e.

\subsection{Frequency-sign-definite subspaces}
\label{sec:freq}

We split $H$ into negative-, zero-, and positive-frequency subspaces,
\begin{equation*}
    H = H_- \oplus H_0 \oplus H_+,
\end{equation*}
defined as $H_- = \Pi_- H$, $H_0 = \Pi_0 H$, and $H_+ = \Pi_+ H$, respectively, using the spectral projections $\Pi_- = E(i(-\infty,0))$, $\Pi_0 = E( \{ 0 \}) = \langle \bm 1, \cdot\rangle \bm 1$, and $\Pi_+ = E(i(0,\infty)) $. By ergodicity, $H_0$ is one-dimensional and $\Pi_0 = \langle \bm 1, \cdot\rangle \bm 1 $. Given any bounded, measurable function $f: i \mathbb R \to \mathbb C$, we have that $f(V) \in B(H)$ commutes with any of $\Pi_-$, $\Pi_0$, or $\Pi_+$, so that $f(V) = f _-(V) + f_0(V) + f_+(V)$, where
\begin{equation*}
    f_\bullet(V) = \Pi_\bullet f(V) = f(V) \Pi_\bullet = \Pi_\bullet f(V) \Pi_\bullet
\end{equation*}
and $\Pi_\bullet$ stands for any of $\Pi_-$, $\Pi_0$, or $\Pi_+$. This means, in particular, that for every $z \in \rho(V)$, the resolvent $R_z(V)$ decomposes as
\begin{equation}
    \label{eq:res-decomp}
    R_z(V) = R^-_z(V) + R^0_z(V) + R^+_z(V),
\end{equation}
where $R^\bullet_z(V) = \Pi_\bullet R_z(V)$. In~\eqref{eq:res-decomp}, $R^0_z(V)$ is trivially a rank-1 operator,
\begin{equation}
    R^0_z(V) = \frac{1}{z}\langle \bm 1, \cdot\rangle \bm 1,
    \label{eq:rz0}
\end{equation}
As such, to arrive at a compact approximation of $R_z(V)$ one only needs to compactify $R^+_z(V)$ and $R^-_z(V)$. In fact, by virtue of the following lemma, it suffices to consider only one of these two operators.

\begin{lemma}
    The map $J: H \to H$ that maps $f \in H$ to its complex conjugate, $J f := f^*$, is an antilinear, isometric isomorphism between $H_+$ and $H_-$. In particular, $J$ intertwines the projections $\Pi_+$ and $\Pi_-$, i.e., $ J \circ \Pi_+ = \Pi_- \circ J$.
    \label{lem:conj}
\end{lemma}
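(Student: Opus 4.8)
The antilinearity of $J$, its isometry, and the involution identity $J^2 = \mathrm{id}_H$ are immediate from pointwise properties of complex conjugation on $L^2(\mu)$: $\overline{\alpha f + \beta g} = \bar\alpha\,\bar f + \bar\beta\,\bar g$, $\lvert \bar f\rvert = \lvert f\rvert$ $\mu$-a.e., and $\overline{\bar f} = f$; conjugation also respects $\mu$-equivalence classes, so $J$ is well defined on $H$, and it is a bijection of $H$ onto itself with $J^{-1} = J$. The substantive part is the intertwining relation $J\Pi_+ = \Pi_- J$, which I would establish first; the isomorphism statement then follows formally.

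To prove $J\Pi_+ = \Pi_- J$, the plan is to transport the spectral measure $E$ of $V$ through $J$. Realness of $V$ reads $JVf = (Vf)^* = V(f^*) = VJf$ for $f \in D(V)$ (and $J$ preserves $D(V)$, since $J$ commutes with each $U^t$); equivalently, the self-adjoint operator $A := iV$ satisfies $JAJ = -A$. One then checks that $S \mapsto F(S) := J\,E(-S)\,J$, for $S \in \mathcal B(i\mathbb R)$ — where $-S = \{-\lambda : \lambda \in S\} = \{\bar\lambda : \lambda \in S\}$, as $\bar\lambda = -\lambda$ on $i\mathbb R$ — is again a projection-valued measure on $i\mathbb R$: each $F(S)$ is a self-adjoint projection because $J$ is a conjugation, so that $\langle Jf, Jg\rangle = \overline{\langle f, g\rangle}$ and $\langle Jf, g\rangle = \overline{\langle f, Jg\rangle}$; strong $\sigma$-additivity uses continuity of $J$; and $F(i\mathbb R) = J\,E(i\mathbb R)\,J = JJ = I$. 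A short computation using realness of $V$ together with the substitution $\lambda \mapsto -\lambda$ then shows that the spectral integral $\int_{i\mathbb R}\lambda\,dF(\lambda)$ equals $V$, so uniqueness of the spectral decomposition of the skew-adjoint operator $V$ forces $F = E$, i.e. $J\,E(S)\,J = E(-S)$ for every Borel $S\subseteq i\mathbb R$.

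Applying this identity with $S = i(0,\infty)$, whose image under $\lambda \mapsto -\lambda$ is $i(-\infty,0)$, gives $J\Pi_+ J = E(i(-\infty,0)) = \Pi_-$, that is $J\Pi_+ = \Pi_- J$; conjugating once more (or repeating the argument with $S = i(-\infty,0)$) gives $J\Pi_- = \Pi_+ J$. Hence $J(H_+) = J\Pi_+ H = \Pi_-(JH) = \Pi_- H = H_-$, and likewise $J(H_-) = H_+$; combined with $J^2 = \mathrm{id}_H$ and the isometry property, $J|_{H_+}\colon H_+ \to H_-$ is an antilinear isometric bijection with inverse $J|_{H_-}$. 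The one genuinely delicate point is the bookkeeping with antilinearity in the spectral-measure transport: $J$ carries the integration variable $\lambda$ to $\bar\lambda = -\lambda$, which is exactly the mechanism swapping the positive- and negative-frequency subspaces, and one must make sure the conjugations are tracked correctly. An alternative that avoids PVM transport is to express $\Pi_\pm$ via Stone's formula in terms of $R_z(V)$ and use $J R_z(V) = R_{\bar z}(V) J$, at the cost of more computation.
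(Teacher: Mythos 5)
Your proof is correct, but it takes a genuinely different route from the paper's. The paper invokes Stone's formula to express $\Pi_{\pm}$ as strong limits of resolvent integrals $\int_{i[0,b_n]}(R_{\lambda+\epsilon}(\tilde V)-R_{\lambda-\epsilon}(\tilde V))\,d\lambda$ (restricting to $\tilde H$ so that $0$ is not an eigenvalue, and choosing $b_n$ off the point spectrum), then verifies the intertwining by taking complex conjugates of the integrand and substituting $\lambda\mapsto-\lambda$---precisely the ``alternative'' you flag in your last sentence, except that with the paper's choice of real endpoints the identity $JR_z(V)=R_{\bar z}(V)J$ is built into the integrand rather than stated separately. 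Your primary argument instead transports the PVM: $F(S):=JE(-S)J$ is verified to be a PVM whose spectral integral equals $V$ (using realness of $V$ and the substitution $\lambda\mapsto-\lambda=\bar\lambda$ on $i\mathbb R$), so uniqueness of the spectral decomposition forces $F=E$ and hence $JE(S)J=E(-S)$ for \emph{every} Borel $S$. This is cleaner and strictly stronger: it proves intertwining of the full spectral measure, not just the half-line projections, it never needs to restrict to $\tilde H$ or to select endpoints $b_n$ avoiding the eigenfrequencies, and it localizes all the ``antilinearity bookkeeping'' to a single well-understood fact (conjugations conjugate sesquilinear forms). The price is that it leans on the abstract uniqueness theorem for PVMs, whereas the paper's Stone's-formula computation is more self-contained and stays closer to the resolvent machinery used elsewhere in the paper. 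One small point worth making explicit in a write-up: in checking $\int_{i\mathbb R}\lambda\,dF(\lambda)=V$, you should note both the integrand identity and the domain identity $D\bigl(\int\lambda\,dF\bigr)=D(V)$, the latter following because $J$ preserves $D(V)$ (it commutes with each $U^t$, as you say) and the scalar measures $S\mapsto\langle f,F(S)f\rangle$ and $S\mapsto\langle Jf,E(-S)Jf\rangle$ coincide.
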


\begin{proof}
    See \cref{sec:proof-conj}.
\end{proof}

It follows from \cref{lem:conj} that $ J \circ R_z^+(V) = R_z^-(V) \circ J$ and thus
\begin{equation}
    R_z^-(V) = J \circ R_z^+(V) \circ J.
    \label{eq:r+-}
\end{equation}
We can therefore compactify $R_z^+(V)$ and use~\eqref{eq:r+-} to obtain a compactification of $R_z^-(V)$.

Before closing this subsection, we note that the decomposition~\eqref{eq:res-decomp} can be equivalently expressed as a direct sum of operators,
\begin{displaymath}
    R_z(V) = R^-_z(V)|_{H_-} \oplus R^0_z(V) |_{H_0} \oplus R^+_z(V) |_{H_+},
\end{displaymath}
where $R^\bullet_z(V) |_{H_\bullet}$ are linear maps from $H_\bullet$ into itself given by restriction of $R^\bullet_z(V)$. In what follows, will be making use of either version of the decomposition of $R_z(V)$ (as well as similar decompositions of other operators) as convenient without change of notation.

\subsection{Construction of the compactified resolvent}
\label{sec:mainalg}
By a version of the spectral mapping theorem for the resolvents of generators of strongly continuous evolution semigroups \cite[Theorem~IV.1.13]{EngelNagel00}, we have that for any $z \in \rho(V)$
\begin{equation*}
    \sigma(R_z(V)) \setminus \{ 0 \} = \left\{ \frac{1}{z - \lambda} : \lambda \in \sigma(V) \right\}.
\end{equation*}
This means that for $z >0$ the resolvent spectrum $\sigma(R_z(V))$ is a subset of the circle of radius $1/z$ centered at the point $1/(2z)$ on the positive real line; see \cref{fig:moveeigs}. We shall denote this circle by $C_z$. Denoting the closed semicircle in $C_z$ located in the upper half of the complex plane by $C_z^+ := \{ \lambda \in C_z: \im\lambda \geq 0 \} $, we have, by the spectral mapping theorem, the inclusion
\begin{equation}
    \sigma(R_z^+(V)) \subseteq C_z^+.
    \label{eq:spec-inc}
\end{equation}
Similarly, the projected resolvent $R_z^-(V)$ to the negative-frequency subspace $H_-$ satisfies $\sigma(R_z^-(V)) \subseteq C_z^-$ with $C_z^- := \{ \lambda \in C_z: \im\lambda \leq 0 \}$. For later convenience, given $z>0$ we define $\beta_z: \tilde C_z \to i \mathbb R$ on $\tilde C_z := C_z \setminus \{ 0 \}$ as
\begin{equation}
    \beta_z(\lambda) = \frac{1}{\lambda} + z.
    \label{eq:res-inv}
\end{equation}
The latter, is a left inverse of the resolvent function $R_z : i \mathbb R \to \tilde C_z \subset \mathbb C$ with $R_z(\lambda) = (z-\lambda)^{-1}$, so that $\beta_z(R_z(V)) = V$.

\begin{figure}
  \centering
  \includegraphics[width=0.7\textwidth]{"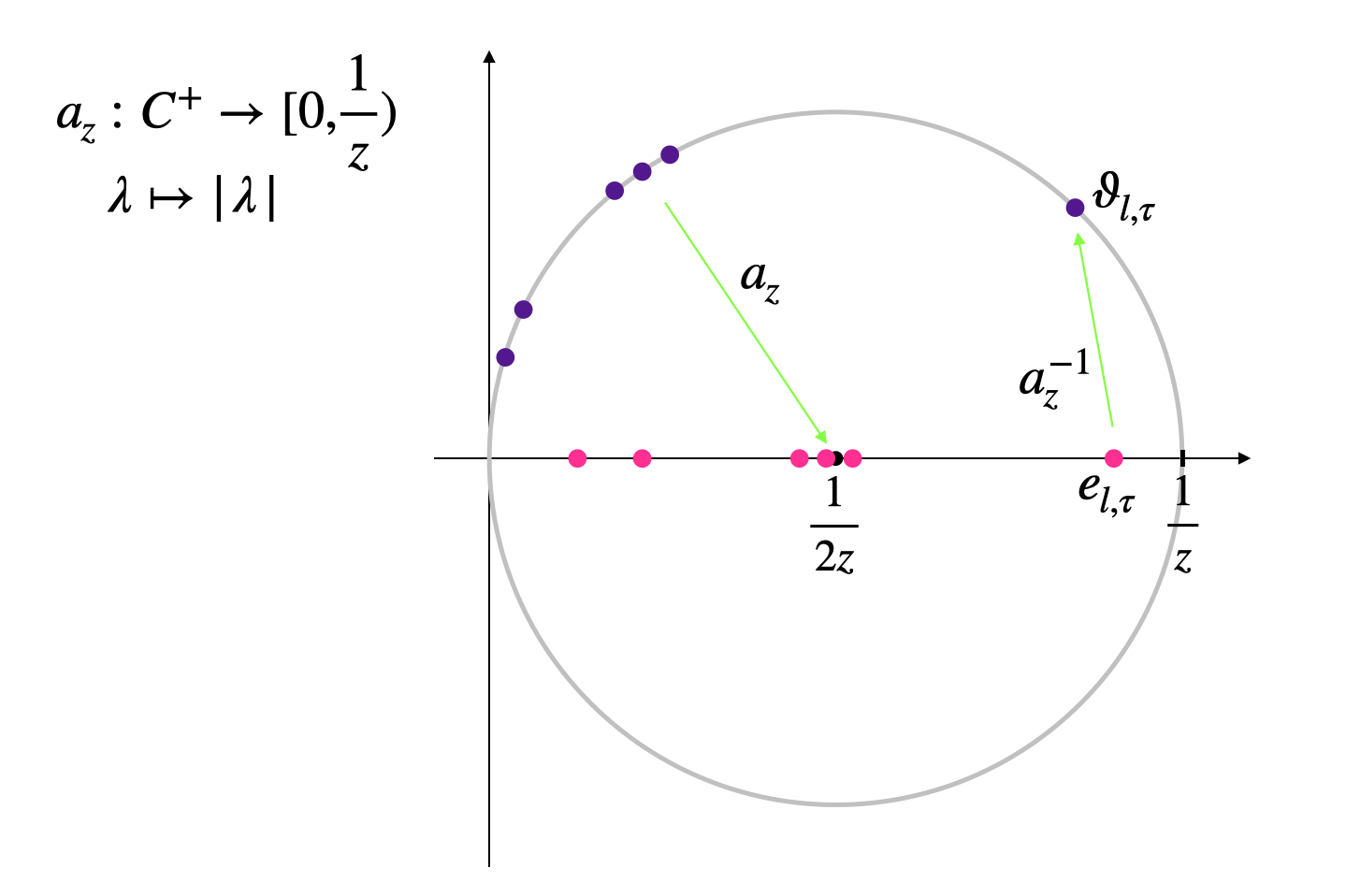"}
  \caption{Schematic of the spectrum of the resolvent $R_z(V)$ for $z>0$ and the bijective correspondence of the spectrum of the projected resolvent $R^+_z(V)$ (which is a subset of the semicircle $C^+_z$ with center $(2z)^{-1}$ and radius $z^{-1}$) and the spectrum of its modulus $S^+_z = \abs{R_z^+(V)}$ (which is a subset of the interval $[0, z^{-1}]$). This correspondence is realized through the function $a_z : C^+_z \to [0, z^{-1}]$ with $a_z(\lambda) = \abs{\lambda}$ (see~\eqref{eq:ginv} for an explicit formula for $a_z^{-1}$). Upon compactification, $S^+_z \mapsto S^+_{z,\tau}$, the operator $S^+_{z,\tau}$ acquires a discrete set of eigenvalues $ e_{l,\tau} \in [0,z^{-1}]$ (magenta dots). These eigenvalues are mapped into $C^+_z$ by an application of $a_z^{-1}$, leading to the eigenvalues $\vartheta_{l,\tau}$ of the compactified resolvent $R_{z,\tau}^+$ (purple dots).}
  \label{fig:moveeigs}
\end{figure}

Next, let $a_z: C_z^+ \to [0, z^{-1}]$ be the modulus function on $C_z^+$ with $a_z(\lambda) = \abs{\lambda}$. By~\eqref{eq:spec-inc}, we can express the modulus of $R_z^+(V)$ as
\begin{equation}
    S^+_z := \abs{R_z^+(V)} = \sqrt{(R_z^+(V))^* R_z^+(V)} = a_z(R_z^+(V)).
    \label{eq:Sz}
\end{equation}
In particular, since $a_z$ is a bijective function, we can recover $R_z^+(V)$ from its modulus i.e.,
\begin{equation*}
    R_z^+(V) = a_z^{-1}(S^+_z),
\end{equation*}
where
\begin{equation}
    a_z^{-1}(x) = x e^{i(\frac{\pi}{2} + \sin^{-1}(xz))}, \quad x \in [0, z^{-1}].
    \label{eq:ginv}
\end{equation}
Note that we can obtain $S^+_z$ by computing a polar decomposition for $R_z^+(V)$,
\begin{equation*}
    R_z^+(V) = W_z S^+_z,
\end{equation*}
where $W_z\in B(H)$ is a partial isometry.

We are now ready to compactify $R_z^+(V)$ by compactifying its modulus. Specifically, given Markov operators $G_\tau \in B(H)$ satisfying \crefrange{prop:K1}{prop:K5}, we define the trace class, positive operators
\begin{equation}
    S^+_{z,\tau} = \sqrt{S^+_z} G_\tau \sqrt{S^+_z},
    \label{eq:szt}
\end{equation}
and the compact operators
\begin{equation}
    R_{z,\tau}^+ = a_z^{-1}(S^+_{z,\tau}).
    \label{eq:rzt+}
\end{equation}
Note that the well-definition of $R_{z,\tau}^+$ in~\eqref{eq:rzt+} relies on the fact that the spectra of $S^+_{z,\tau}$ are subsets of $[0, z^{-1}]$ (i.e., the domain of definition of $a_z^{-1}$) since $G_\tau$ are contractive operators. In particular, for each $z,\tau>0$,  $S^+_{z,\tau}$ is a positive, trace class operator with spectrum contained in $[0,z^{-1}]$, range contained in $H_+$, and nullspace containing $H_-$ and $H_0$.
We can thus view $S^+_{z,\tau}$ as an operator on $H_+$ that admits an eigendecomposition
\begin{equation*}
    S^+_{z,\tau} \psi_{l,\tau} = e_{l,\tau} \psi_{l,\tau}, \quad l \in \mathbb N,
\end{equation*}
where the eigenvalues $e_{l,\tau}$ lie in $ [0, z^{-1}]$ (with 0 as the only possible accumulation point) and the corresponding eigenfunctions $\psi_{l,\tau}$ form an orthonormal basis of $H_+$. By~\eqref{eq:rzt+}, the $\psi_{l,\tau}$ are also eigenfunctions of $R_{z,\tau}^+$, corresponding to the eigenvalues
\begin{equation*}
    \vartheta_{l,\tau} = a_z^{-1}(e_{l,\tau}) \in C_z^+.
\end{equation*}
The following lemma will ensure later on that the $\vartheta_{l,\tau}$ have well-defined corresponding eigenfrequencies.
\begin{lemma}
    The eigenvalues $\vartheta_{l,\tau}$ of $R_{z,\tau}^+$ are nonzero.
    \label{lem:lambda-pos}
\end{lemma}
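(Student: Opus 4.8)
The plan is to show that $\vartheta_{l,\tau} = a_z^{-1}(e_{l,\tau})$ is nonzero by showing the underlying eigenvalue $e_{l,\tau}$ of $S^+_{z,\tau}$ is strictly positive. Indeed, from the explicit formula~\eqref{eq:ginv} we have $a_z^{-1}(x) = x e^{i(\pi/2 + \sin^{-1}(xz))}$, so $\abs{a_z^{-1}(x)} = x$, and hence $a_z^{-1}(x) = 0$ if and only if $x = 0$. Thus it suffices to prove that $0$ is not an eigenvalue of $S^+_{z,\tau}$ when viewed as an operator on $H_+$; equivalently, that $S^+_{z,\tau}$ has trivial nullspace on $H_+$.

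The argument proceeds by unpacking the definition $S^+_{z,\tau} = \sqrt{S^+_z}\, G_\tau \sqrt{S^+_z}$ in~\eqref{eq:szt}. Suppose $\psi \in H_+$ is nonzero with $S^+_{z,\tau}\psi = 0$. Then $\langle \psi, S^+_{z,\tau}\psi\rangle = \langle \sqrt{S^+_z}\,\psi, G_\tau \sqrt{S^+_z}\,\psi\rangle = 0$. The key observation is that $S^+_z = \abs{R_z^+(V)}$ is injective on $H_+$: by the spectral mapping theorem, $\sigma(R_z^+(V)) \subseteq C_z^+$ and, since $z \in \rho(V)$, the value $0$ is not in the spectrum of the restriction of $R_z(V)$ to any of the spectral subspaces — more precisely, $R_z(V) = (z-V)^{-1}$ is a bijection of $H$ onto $D(V)$, so $R_z^+(V)$ restricted to $H_+$ is injective, and hence so is $S^+_z = \sqrt{(R_z^+(V))^* R_z^+(V)}$ restricted to $H_+$ (and thus so is $\sqrt{S^+_z}$ restricted to $H_+$). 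Therefore $g := \sqrt{S^+_z}\,\psi$ is a nonzero element of $H_+$. But then $\langle g, G_\tau g\rangle = 0$ with $g \neq 0$ contradicts the strict positivity of $G_\tau$ guaranteed by \cref{prop:K4}. Hence no such $\psi$ exists, $S^+_{z,\tau}$ is injective on $H_+$, every eigenvalue $e_{l,\tau}$ is strictly positive, and consequently $\vartheta_{l,\tau} = a_z^{-1}(e_{l,\tau}) \neq 0$.

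I expect the main (minor) obstacle to be making the injectivity of $S^+_z$ on $H_+$ fully rigorous: one must be careful that $\sqrt{S^+_z}$ maps $H_+$ into $H_+$ and is injective there, using that $S^+_z$ has range in $H_+$, nullspace containing $H_- \oplus H_0$, and is strictly positive as an operator on $H_+$ because $R_z^+(V)$ is a bijective map on $H_+$ (being the restriction of the resolvent of a skew-adjoint operator with $z$ in its resolvent set). Once this is in place, the strict positivity property \cref{prop:K4} of $G_\tau$ delivers the conclusion immediately. An alternative phrasing avoiding square roots altogether: since $G_\tau$ is strictly positive it is injective, and $S^+_z$ restricted to $H_+$ is injective, so the composition $\sqrt{S^+_z} G_\tau \sqrt{S^+_z}$ is injective on $H_+$ as a composition of injective maps on $H_+$; either route is routine.
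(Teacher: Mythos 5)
Your proof is correct and takes essentially the same route as the paper: both arguments reduce the claim to showing that every eigenvalue $e_{l,\tau}$ of $S^+_{z,\tau}$ is strictly positive, which follows from injectivity of $S^+_z = \lvert R_z^+(V)\rvert$ on $H_+$ (via the resolvent being a bijection onto $D(V)$) combined with strict positivity of $G_\tau$ from \cref{prop:K4}. Your phrasing via the quadratic form $\langle \psi, S^+_{z,\tau}\psi\rangle = \langle \sqrt{S^+_z}\psi, G_\tau \sqrt{S^+_z}\psi\rangle$ is a clean way to package the step that the paper states slightly more informally.
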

% are ordered as $z^{-1} > e_{1, \tau} \geq e_{2, \tau} \geq \cdots \searrow 0^+$,

\begin{proof}
    Since the resolvent function $R_z : i \mathbb R \to \mathbb C$ maps $[0, \infty)$ to $C_z^+ \setminus \{ 0 \}$, 0 is not an eigenvalue of $R_z(V)$ and thus it is also not an eigenvalue of $S^+_z$. Therefore, for every nonzero $f \in H_+$, $S_z^{1/2} f $ is nonzero and $ S^+_{z,\tau} f $ is nonzero so long as $G_\tau S_z^{1/2} f$ is nonzero. The latter follows from the fact that $G_\tau$ is a strictly positive operator (\cref{prop:K4}), and we conclude that $S^+_{z,\tau} f \neq 0$ and thus that every eigenvalue $e_{l,\tau}$ of $S^+_{z,\tau}$ is strictly positive. Therefore, every eigenvalue $\vartheta_{l,\tau} = a_z^{-1}(e_{l,\tau})$ of $R_{z,\tau}^+$ is nonzero.
\end{proof}

We can build a compact approximation $R_{z,\tau}^-$ of the negative-frequency resolvent $R_z^-(V)$ by defining
\begin{displaymath}
    S_z^- = \lvert R_z^-(V) \rvert
\end{displaymath}
and carrying out analogous steps to~\eqref{eq:Sz}--\eqref{eq:rzt+} used to obtain $R_{z,\tau}^+$, replacing $S_z^+$ by $S_z^-$. Analogously to~\eqref{eq:r+-}, this operator satisfies
\begin{equation}
    R_{z,\tau}^- = J \circ R_{z,\tau}^+ \circ J.
    \label{eq:rzt-}
\end{equation}
Thus, we can conveniently write down an eigendecomposition
\begin{displaymath}
    R_{z,\tau}^- \vartheta_{l,\tau} = \vartheta_{l,\tau} \psi_{l,\tau}, \quad l \in \{ -1, -2, \ldots \},
\end{displaymath}
with eigenvalues $\vartheta_{l,\tau} \in C_z^-$ and corresponding orthonormal eigenfunctions $\psi_{l,\tau} \in H_-$ using the eigendecomposition of $R_{z,\tau}^+$:
\begin{displaymath}
    \vartheta_{l,\tau} = \vartheta_{-l,\tau}^*, \quad \psi_{l,\tau} = \psi_{-l,\tau}^*, \quad l \in \{ -1, -2, \ldots \}.
\end{displaymath}

Using~\eqref{eq:rz0}, \eqref{eq:rzt+}, and~\eqref{eq:rzt-}, we define, for each $z,\tau > 0$, $R_{z,\tau} : H \to H$ as the compact operator given by
\begin{equation}
    R_{z,\tau} = R_{z,\tau}^{-} + R_z^0 + R_{z,\tau}^+.
    \label{eq:rzt}
\end{equation}
This operator will serve as our compact approximation of the resolvent $R_z(V)$. Defining $\psi_{0,\tau} = \bm 1$, $\vartheta_{0,\tau} = z^{-1}$, and using the eigendecompositions of $R_{z,\tau}^+$ and $R_{z,\tau}^-$ just described along with \cref{lem:lambda-pos}, we deduce that $R_{z,\tau}$ is diagonal in the orthonormal eigenbasis $ \{ \psi_{l,\tau} \}_{l\in \mathbb Z}$ of $H$ with corresponding eigenvalues $ \{ \vartheta_{l,\tau} \}_{l\in\mathbb Z}$ lying in $C_z \setminus \{ 0 \}$. Correspondingly,
\begin{equation}
    V_{z,\tau} := \beta_z(R_{z,\tau})
    \label{eq:vzt}
\end{equation}
is an unbounded skew-adjoint operator on $H$ with compact resolvents $R_{\tilde{z}}(V_{z,\tau})$, $\tilde z \in \rho(V_{z,\tau})$. For $\tilde z = z$, the resolvent $R_z(V_{z,\tau})$ is equal to $R_{z,\tau}$. In particular, $V_{z,\tau}$ admits the eigendecomposition in~\eqref{eq:vzt-eig} for the eigenfunctions $\psi_{l,\tau}$ and the eigenfrequencies
\begin{equation}
    \omega_{l,\tau} = \frac{1}{i} \beta_z(\vartheta_{l,\tau}) = \frac{1}{i} \left(\frac{1}{\vartheta_{l,\tau}} + z\right).
\end{equation}

\begin{remark}
    An alternative way of compactifying $S_z^+$ would be to define $\tilde S^+_{z,\tau} = G_\tau^{1/2} S_z G_\tau^{1/2}$. This operator is positive, compact, and its spectrum is contained in $[0,z^{-1}]$ (in fact, $\sigma(\tilde S^+_{z,\tau}) = \sigma(S^+_{z,\tau})$), and thus $\tilde R_{z,\tau}^+ = a_z^{-1}(\tilde S^+_{z,\tau})$ is a well-defined compact approximation of $R_z^+(V)$. However, unless $S_z$ and $G_\tau$ happen to commute, $\tilde S^+_{z,\tau}$ does not map $H_+$ into itself, and is thus it is not suitable for providing an eigenbasis of this subspace.
\end{remark}

\subsection{Spectral convergence}
\label{sec:proof}

Our main spectral approximation result is as follows.

\begin{theorem}
    For each $z>0$, the operators $V_{z,\tau}$ in~\eqref{eq:vzt} satisfy \crefrange{prop:p1}{prop:p5}. As a result, as $\tau\to0^+$, $V_{z,\tau}$, converges spectrally to $V$ in the sense of \cref{thm:spec-conv}.
    \label{thm:main}
\end{theorem}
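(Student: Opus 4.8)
The plan is to verify properties \crefrange{prop:p1}{prop:p5} for the operator family $V_{z,\tau}$ constructed in \cref{sec:mainalg}, since once these hold, the spectral convergence conclusion follows immediately from \cref{thm:spec-conv} (strong resolvent convergence of skew-adjoint operators implies the stated spectral convergence). Properties \ref{prop:p1}--\ref{prop:p4} are largely structural and should follow by unwinding the construction. For \ref{prop:p1} (skew-adjointness): by \eqref{eq:vzt}, $V_{z,\tau} = \beta_z(R_{z,\tau})$ where $R_{z,\tau}$ is a compact operator that is diagonal in the orthonormal eigenbasis $\{\psi_{l,\tau}\}_{l\in\mathbb{Z}}$ with eigenvalues $\vartheta_{l,\tau} \in C_z \setminus \{0\}$; applying $\beta_z(\lambda) = 1/\lambda + z$ maps $C_z \setminus \{0\}$ bijectively onto $i\mathbb{R}$ (this is the inverse of the resolvent function $R_z$), so $V_{z,\tau}$ is self-adjoint-times-$i$, i.e.\ skew-adjoint, with $V_{z,\tau}\psi_{l,\tau} = i\omega_{l,\tau}\psi_{l,\tau}$ for real $\omega_{l,\tau}$. \ref{prop:p2} (compact resolvent): one checks that $R_z(V_{z,\tau}) = R_{z,\tau}$, which is compact by construction (built from the trace-class operators $S^+_{z,\tau}$ via the continuous function $a_z^{-1}$ and the rank-one $R^0_z$); since a single resolvent being compact suffices, \ref{prop:p2} holds. \ref{prop:p3} (reality): this should follow from \cref{lem:conj} and the symmetric roles of $R^+_{z,\tau}$ and $R^-_{z,\tau}$ via \eqref{eq:rzt-}, together with the fact that $G_\tau$ and $S^\pm_z$ are real operators (the kernel $p_\tau$ is real-valued), so conjugation intertwines $R_{z,\tau}$ with itself and hence $V_{z,\tau}$ too. \ref{prop:p4} ($V_{z,\tau}\bm 1 = 0$): by construction $\psi_{0,\tau} = \bm 1$ is an eigenfunction with $\vartheta_{0,\tau} = z^{-1}$, so $\omega_{0,\tau} = \tfrac{1}{i}(z + z) $... more precisely $\beta_z(z^{-1}) = z + z$, wait — one must check $\beta_z(z^{-1}) = z + z$? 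No: $\beta_z(1/z) = z + z$ is wrong; $\beta_z(1/z) = 1/(1/z) + z$ — careful, the correct normalization from \eqref{eq:rz0} gives $R^0_z$ with eigenvalue $1/z$ on $\bm 1$, and $\beta_z(1/z) = z + z = 2z$?? This needs the definition reconciled; in fact $\beta_z$ should be the inverse of $\lambda \mapsto 1/(z-\lambda)$, so $\beta_z(1/z)$ corresponds to $\lambda$ with $1/(z-\lambda) = 1/z$, i.e.\ $\lambda = 0$, giving $V_{z,\tau}\bm 1 = 0$. I would state this cleanly using $\beta_z = R_z^{-1}$.

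The substantive content is \ref{prop:p5}: strong resolvent convergence $V_{z,\tau} \srto V$ as $\tau \to 0^+$. By \cref{def:src}(i), it suffices to show $R_z(V_{z,\tau}) = R_{z,\tau} \sto R_z(V)$ strongly for this one fixed $z > 0$ (which lies in $\rho(V)$ since $\sigma(V) \subseteq i\mathbb{R}$). Using the decompositions \eqref{eq:res-decomp} and \eqref{eq:rzt}, and noting $R^0_z$ is common to both, this reduces to showing $R^+_{z,\tau} \sto R^+_z(V)$ and $R^-_{z,\tau} \sto R^-_z(V)$; by \eqref{eq:r+-}, \eqref{eq:rzt-} and the isometry $J$, the negative-frequency statement follows from the positive-frequency one. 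So the crux is $R^+_{z,\tau} = a_z^{-1}(S^+_{z,\tau}) \sto a_z^{-1}(S^+_z) = R^+_z(V)$ on $H_+$. The plan here: first establish $S^+_{z,\tau} \sto S^+_z$ strongly as $\tau \to 0^+$. By \eqref{eq:szt}, $S^+_{z,\tau} = \sqrt{S^+_z}\, G_\tau \sqrt{S^+_z}$, and since $G_\tau \sto I$ strongly (\cref{prop:K3}) with $\|G_\tau\| \le 1$ (Markovianity/contractivity), for any $f$ we have $\sqrt{S^+_z}\, G_\tau \sqrt{S^+_z} f = \sqrt{S^+_z}\, G_\tau g$ with $g = \sqrt{S^+_z} f$, and $G_\tau g \to g$ so $\sqrt{S^+_z}\,G_\tau g \to \sqrt{S^+_z}\,g = S^+_z f$ by boundedness and continuity of $\sqrt{S^+_z}$. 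Hence $S^+_{z,\tau} \sto S^+_z$. Then I must transfer this to $a_z^{-1}(S^+_{z,\tau}) \sto a_z^{-1}(S^+_z)$. Since $a_z^{-1}: [0, z^{-1}] \to C_z^+$ from \eqref{eq:ginv} is continuous (indeed, it extends continuously to a neighborhood, and is bounded), and the relevant operators are uniformly bounded (spectra in $[0,z^{-1}]$ throughout), I can invoke the standard fact that strong convergence of uniformly bounded self-adjoint operators with spectra in a fixed compact set $K$ implies strong convergence of $h(\cdot)$ for any continuous $h$ on $K$ — this is proved first for polynomials (using boundedness and a telescoping argument), then by Weierstrass approximation for general continuous $h$. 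This gives $R^+_{z,\tau} \sto R^+_z(V)$, completing \ref{prop:p5}.

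The main obstacle I anticipate is the functional-calculus continuity step, $S^+_{z,\tau} \sto S^+_z \implies a_z^{-1}(S^+_{z,\tau}) \sto a_z^{-1}(S^+_z)$, done carefully enough to be rigorous. Two subtleties: (a) $a_z^{-1}$ is a function of a self-adjoint (positive) operator producing a \emph{non-self-adjoint} operator on $H_+$ (its values lie on the semicircle $C_z^+$, not the real line), so one should phrase it as $a_z^{-1}(x) = x e^{i(\pi/2 + \arcsin(xz))}$ being a continuous, bounded, complex-valued function on $[0,z^{-1}]$ and appeal to continuity of the \emph{continuous} functional calculus for bounded self-adjoint operators in the strong operator topology restricted to a bounded spectral window — which is legitimate since we can write $a_z^{-1}(x) = u(x) + i v(x)$ with $u, v$ real continuous and handle each by the polynomial-approximation argument; and (b) one must make sure the operators $S^+_{z,\tau}$ and $S^+_z$ really do all have spectra inside the \emph{fixed} compact set $[0, z^{-1}]$ so that a single Weierstrass approximation works uniformly — this is exactly the contractivity of $G_\tau$ that the text already flagged after \eqref{eq:rzt+}, so I would cite that. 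A minor additional point worth a sentence: verifying $R_z(V_{z,\tau}) = R_{z,\tau}$ rigorously (that $\beta_z$ applied to $R_{z,\tau}$ yields an operator whose resolvent at $z$ is $R_{z,\tau}$), which follows from $\beta_z$ being a left inverse of $R_z$ and the diagonalization of $R_{z,\tau}$ — essentially a bookkeeping check in the eigenbasis. None of \ref{prop:p1}--\ref{prop:p4} or the reduction of \ref{prop:p5} to the positive-frequency block should present real difficulty; the entire weight of the proof rests on the two-line strong-convergence argument for $S^+_{z,\tau}$ plus the functional-calculus continuity.
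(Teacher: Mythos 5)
Your proposal is correct and follows essentially the same route as the paper's proof: properties \crefrange{prop:p1}{prop:p4} are treated as structural, and for \cref{prop:p5} the paper uses exactly your argument of first establishing $S^+_{z,\tau} \sto S^+_z$ from $G_\tau \sto I$ and contractivity, then transferring to $a_z^{-1}(\cdot)$ via a uniform polynomial approximation of the (complex-valued) continuous function $a_z^{-1}$ on the fixed compact interval $[0,z^{-1}]$ together with a telescoping estimate, with the negative-frequency block handled through \cref{lem:conj} and the zero-frequency block trivially. Incidentally, your observation that \eqref{eq:res-inv} should read $\beta_z(\lambda) = z - 1/\lambda$ (so that $\beta_z(1/z) = 0$, consistent with $\omega_{0,\tau} = 0$) correctly identifies a sign typo in the paper.
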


\begin{proof}
    \Crefrange{prop:p1}{prop:p3} follow by construction of $V_{z,\tau}$. \Cref{prop:p4} follows from the fact that $\psi_{\tau,0} = \bm 1$ and $\omega_{\tau,0} = 0$.

    To establish \cref{prop:p5}, fix a nonzero $f \in H_+$, and note that since $a_z^{-1}$ is a continuous function on the closed interval $[0, z^{-1}]$, for any $\epsilon > 0$, there exists a polynomial $p : [0,z^{-1}] \to \mathbb C$ such that $\lVert a_z^{-1} - p \rVert_\infty < \epsilon / (3 \lVert f \rVert_H)$. We have
    \begin{multline*}
        \lVert (R_z(V) - R_z(V_{z,\tau})) f \rVert_H \\
        \begin{aligned}
            &= \lVert (R_z^+(V) - R_{z,\tau}^+)f \rVert_H \\
            &= \lVert (a_z^{-1}(S^+_z) - a_z^{-1}(S^+_{z,\tau})) f\rVert_H \\
            &= \lVert (a_z^{-1}(S^+_z) - p(S^+_z) + p(S^+_z) - p(S^+_{z,\tau}) + p(S^+_{z,\tau}) - a_z^{-1}(S^+_{z,\tau})) f\rVert_H \\
            & \leq \lVert a_z^{-1}(S^+_z) - p(S^+_z)\rVert \lVert f\rVert_H + \lVert (p(S^+_z) - p(S^+_{z,\tau})) f\rVert_H + \lVert p(S^+_{z,\tau}) - a_z^{-1}(S^+_{z,\tau})\rVert \lVert f\rVert_H \\
            & < \frac{2}{3} \epsilon + \lVert (p(S^+_z) - p(S^+_{z,\tau})) f\rVert_H.
        \end{aligned}
    \end{multline*}
    Since, as $\tau \to 0^+$, $G_\tau$ converges strongly to the identity, we have that $S^+_{z,\tau} \sto S^+_z$ and thus $p(S^+_{z,\tau}) \sto p(S^+_z)$. In particular, there exists $\delta > 0$ such that for all $\tau \in (0, \delta)$, $\lVert  (p(S^+_z) - p(S^+_{z,\tau})) f\rVert_H < \epsilon / 3$, and thus
    \begin{equation*}
        \lVert (R_z(V) - R_z(V_{z,\tau})) f \rVert_H < \epsilon.
    \end{equation*}
    It therefore follows that $R_z(V_{z,\tau}) \sto R_z(V)$ on $H_+$. The case $f \in H_-$ follows from the fact that $J \circ (R_z^-(V) - R_{z,\tau}^-) f = (R_z^+(V) - R_{z,\tau}^+) \circ Jf$. The case $f \in H_0$ is trivial.
\end{proof}

As noted in \cref{sec:spectral-approx}, an immediate corollary of \cref{thm:main} is that the unitary evolution groups $\{U^t_{z,\tau} := e^{t V_{z,\tau}}\}_{t\in \mathbb R}$ generated by $V_{z,\tau}$ converge to the Koopman group $ \{ U^t \}_{t \in \mathbb R}$ in the strong operator topology of $H$. In particular, for every $t\in \mathbb R$ and $f \in H$, we have (cf.\ \eqref{eq:approx-koopman})
\begin{equation}
    U^tf = \lim_{\tau \to 0^+} U^t_\tau f, \quad U^t_\tau f = \sum_{l\in \mathbb Z} e^{i\omega_{l,\tau}t} \langle \psi_{l,\tau}, f\rangle \psi_{l,\tau}.
    \label{eq:approx-koopman2}
\end{equation}
Equation~\eqref{eq:approx-koopman2} indicates that, collectively, the eigenvalues and eigenfunctions of $U^t_{z,\tau}$ well-approximate the action of the Koopman operator $U^t$ on observables in $H$.

\subsection{Slowly decorrelating observables and $\epsilon$-approximate Koopman spectra}
\label{sec:pseudospec}

In a number of applications, e.g. \cite{FroylandEtAl21}, one is interested in identifying observables $f \in \tilde H$ with slow decay of their time-autocorrelation function,
\begin{equation}
    \label{eq:autocorr}
    C_f(t) \equiv C_{ff}(t) = \langle f, U^t f\rangle.
\end{equation}
Note that in~\eqref{eq:autocorr} we have taken $f$ to have zero mean. For observables $f \in H$ with $\bar f \equiv \int_{\mathcal M} f \, d\mu \neq 0$ one would consider the anomaly correlation $C_{f'}$, computed via~\eqref{eq:autocorr} for $f' = f - \bar f$. For a Koopman eigenfunction $\psi \in \tilde H$ with unit norm and corresponding eigenfrequency $\omega \in \mathbb{R}$, the autocorrelation function behaves as a pure complex phase,
\begin{equation}
    C_\psi(t) = e^{i\omega t},
    \label{eq:eig-autocorr}
\end{equation}
whose modulus $\abs{C_\psi(t)} = 1$ does not decay as $\abs{t}$ increases. In systems with mixing dynamics, non-trivial such elements of $\tilde H$ do not exist (see \cref{sec:spectral-ergodic}), so it is natural to seek observables $\psi$ that satisfy~\eqref{eq:eig-autocorr} approximately in some sense.

Due to the strong dynamical convergence in~\eqref{eq:approx-koopman2}, the eigenfunctions of $\psi_{l,\tau}$ of $U^t_{z,\tau}$ are natural candidate observables to have slow decay of their time-autocorrelation modulus $\lvert C_{\psi_{l,\tau}}(t)\rvert$. However, one should keep in mind that, in general, strong convergence of $U^t_{z,\tau}$ to $U^t$ does not imply that any given eigenpair $(\omega_{l,\tau}, \psi_{l,\tau})$ will approximately satisfy~\eqref{eq:eig-autocorr} with high accuracy. Our approach is therefore to identify appropriate eigenpairs from the spectra of $V_{z,\tau}$ \emph{a posteriori}, by means of the following procedure.

Given a time parameter $T_c>0$, and a candidate eigenpair $(\omega, \psi)$ with $\omega \in \mathbb R$ and $\psi$ a unit vector in $H$, we compute the non-negative number
\begin{equation}
    \varepsilon_{T_c}(\omega, \psi) = \max_{t\in[-T_c, T_c]} \re \left(1 - e^{-i\omega t}C_\psi(t)\right).
    \label{eq:eig-epsilon}
\end{equation}

Note that $\varepsilon_{T_c}(\omega, \psi) = 0$ for all $T_c \geq 0$ whenever $\psi$ is a Koopman eigenfunction satisfying~\eqref{eq:eig-autocorr}; otherwise $\varepsilon_{T_c}(\omega, \psi) \in (0,1]$. Moreover, we have $\varepsilon_{T_c}(\omega, \psi) = \varepsilon_{T_c}(-\omega, \psi^*)$. Based on these facts, for a given $T_c$, we can order the eigenpairs $ \{ (\omega_{l,\tau}, \psi_{l,\tau}) \}_{l \in \mathbb Z}$ from the spectrum of $V_{z,\tau}$ as
\begin{equation*}
    (\omega_{0,\tau}, \psi_{0,\tau}),\;(\omega_{1,\tau}, \psi_{1,\tau}),\; (\omega_{-1,\tau}, \psi_{-1,\tau}),\; (\omega_{2,\tau}, \psi_{2,\tau}),\; (\omega_{-2,\tau}, \psi_{-2,\tau}),\; \ldots,
\end{equation*}
where $0 = \varepsilon_{T_c}(\omega_{0,\tau}, \psi_{0,\tau}) \leq \varepsilon_{T_c}(\omega_{\pm 1,\tau}, \psi_{\pm 1,\tau}) \leq \varepsilon_{T_c}(\omega_{\pm 2,\tau}, \psi_{\pm 2,\tau}) \leq \cdots \leq 1$. In fact, ordering the eigenpairs in terms of $\varepsilon_{T_c}(\omega_{l,\tau}, \psi_{l,\tau})$ is closely related to identifying elements in the $\epsilon$-approximate spectrum of the Koopman operator, as we now describe.

First, we recall (e.g., \cite{ChaitinHarrabi98,Chatelin11}) that the $\epsilon$-approximate spectrum $\sigma_{\text{ap},\epsilon}(A)$ of a closed operator $A: D(A) \to \mathbb H$ on a Hilbert space $\mathbb H$ is the set of complex numbers $\lambda$ for which there exists a nonzero element $f \in \mathbb H$ such that
\begin{equation*}
    \lVert A f - \lambda f\rVert_{\mathbb H} < \epsilon \lVert f\rVert_{\mathbb H};
\end{equation*}
in other words, $\lambda \in \sigma_{\text{ap},\epsilon}(A)$ can be thought as behaving as an element of the point spectrum up to tolerance $\epsilon$. As $\epsilon$ increases, $\sigma_{\text{ap},\epsilon}(A)$ forms an increasing family of open subsets of the complex plane such that $\bigcup_{\epsilon>0}\sigma_{\text{ap},\epsilon}(A) = \mathbb C$. Moreover, if $A$ is normal and bounded then $\bigcap_{\epsilon>0} \sigma_{\text{ap},\epsilon}(A) = \sigma(A)$. In that case, $\sigma_{\text{ap},\epsilon}(A)$ coincides with the $\epsilon$-pseudospectrum of $A$, the latter defined as the set of complex numbers $\lambda$ such that $\lVert (A-\lambda)^{-1}\rVert > 1/\epsilon$, with the convention that $\lVert (A-\lambda)^{-1}\rVert = \infty$ when $\lambda \in \sigma(A)$ \cite{TrefethenEmbree05}.

One readily verifies that the eigenpairs $(\omega_{l,\tau},\psi_{l,\tau})$ from~\eqref{eq:approx-koopman2} satisfy
\begin{equation*}
    \lVert U^t \psi_{l,\tau} - e^{i\omega_{l,\tau}t} \psi_{l,\tau} \rVert_H^2 \leq \varepsilon_{T_c}(\omega_{l,\tau}, \psi_{l,\tau})
\end{equation*}
It therefore follows that $e^{i\omega_{l,\tau}t}$ lies in the closure $\overline{\sigma_{\text{ap},\epsilon}(U^t)}$ of the $\epsilon$-approximate point spectrum of the Koopman operator $U^t$ for tolerance $\epsilon = \sqrt{2 \varepsilon_{T_c}(\omega_{l,\tau}, \psi_{l,\tau})}$ for every time $t \in [-T_c, T_c]$.

Intuitively, we would like that $\epsilon_{T_c}(\psi_{l,\tau})$ is small for large values of $T_c$; that is, $\psi_{l,\tau}$ should behave approximately like a Koopman eigenfunction uniformly over large time intervals. Moreover, since $U^t_\tau$ is a unitary (and hence normal, bounded) operator, the same conclusions hold for the $\epsilon$-pseudospectrum. While criteria for selecting $T_c$ will depend on the application at hand (e.g., $T_c$ could be chosen on the basis of relevant prior knowledge about the system such as decorrelation and/or Lyapunov timescales), any given $T_c$ induces an ordering the eigenfunctions in terms of $\epsilon_{T_c}(\psi_{l,\tau})$ that we find useful in practice.

\subsection{Proof of \cref{lem:conj}}
\label{sec:proof-conj}

We recall Stone's formula for computing the action of the spectral measure $\hat E : \mathcal B(\mathbb R) \to B(\mathbb H)$ of a self-adjoint operator $A : D(A) \to \mathbb H$ on a Hilbert space $\mathbb H$ (e.g., \cite[Chapter~9]{Oliveira09}),
\begin{equation*}
    \frac{1}{2}(\hat E((a,b)) + \hat E([a,b])) f = \lim_{\epsilon\to 0^+} \int_{[a, b]} (R_{\lambda+i\epsilon}(A) - R_{\lambda-i\epsilon}(A)) f\, d\lambda, \quad \forall f \in \mathbb H,
\end{equation*}
where $-\infty < a < b < \infty$ and $A = \int_{\mathbb R} \lambda\, d\tilde E(\lambda)$. Using this result for $\mathbb H = \tilde H$, $A= \tilde V / i$ with $\tilde V = V|_{\tilde H}$ (i.e., $\tilde V$ is the restriction of the generator on the subspace $\tilde H \subset H$ of zero-mean observables), and the spectral measure $\tilde E : \mathcal B(i \mathbb R) \to B(\tilde H)$ of $\tilde V$, we get
\begin{equation*}
    \frac{1}{2}(\hat E((a,b)) + \hat E([a,b])) f = \lim_{\epsilon\to 0^+} \int_{i[a,b]} (R_{\lambda+\epsilon}(\tilde V) - R_{\lambda-\epsilon}(\tilde V)) f\, d\lambda, \quad \forall f \in \tilde H.
\end{equation*}

Next, choosing a sequence $b_n$ of positive numbers increasing to infinity which are not eigenfrequencies of $\tilde V$, and noting that $a=0$ is not an eigenvalue of $\tilde V$, we have
\begin{equation}
    \frac{1}{2}(\tilde E(i(0,b_n)) + \tilde E(i[0,b_n])) = \tilde E(i[a,b_n])
\end{equation}
and thus, for any $f \in \tilde H$,
\begin{equation}
    \begin{aligned}
        \Pi_+ f &= \lim_{n\to\infty}  \tilde E(i[0, b_n]) f = \lim_{n\to\infty}\lim_{\epsilon\to 0^+}\int_{i[0,b_n]} (R_{\lambda+\epsilon}(\tilde V) - R_{\lambda-\epsilon}(\tilde V)) f\, d\lambda,\\
        \Pi_- f &= \lim_{n\to\infty} \tilde E(i[-b_n, 0]) f = \lim_{n\to\infty}\lim_{\epsilon\to 0^+}\int_{i[-b_n, 0]} (R_{\lambda+\epsilon}(\tilde V) - R_{\lambda-\epsilon}(\tilde V)) f\, d\lambda.
    \end{aligned}
    \label{eq:pi-plus-minus}
\end{equation}
Meanwhile, we have
\begin{align*}
    (\tilde E(i[-b_n, 0]) \circ J) f
        &= \tilde E(i[-b_n, 0]) f^* \\
        &= \lim_{\epsilon \to 0^+} \int_{i[-b_n, 0]}\left(\frac{1}{(\lambda-\epsilon) - \tilde V} - \frac{1}{(\lambda+\epsilon) - \tilde V}\right) f^* \, d\lambda \\
        &= \left(\lim_{\epsilon \to 0^+} \int_{i[-b_n, 0]}\left(\frac{1}{(-\lambda-\epsilon) - \tilde V} - \frac{1}{(-\lambda+\epsilon) - \tilde V} \right) f \, d\lambda\right)^* \\
        &= \left(\lim_{\epsilon \to 0^+} \int_{i[0, b_n]}\left(\frac{1}{(\lambda-\epsilon) - \tilde V} - \frac{1}{(\lambda+\epsilon) - \tilde V} \right) f \, d\lambda \right) ^*\\
        &= (J \circ \tilde E(i[0,b_n]))f.
\end{align*}
With the above and~\eqref{eq:pi-plus-minus}, we obtain
\begin{equation*}
    (\Pi_- \circ J) f = \lim_{n\to\infty} (\tilde E(i[-b_n, 0]) \circ J) f =  \lim_{n\to\infty} (J \circ \tilde E(i[0, b_n])) f = (J \circ \Pi_+) f,
\end{equation*}
proving the lemma.

\section{Finite-rank approximation}
\label{sec:finite-rank}

Invariably, usage of the regularized generators $V_{z,\tau}$ from \eqref{eq:vzt} in practical applications requires access to spectrally consistent finite-rank approximations of these operators. In this section, we address the problem of building such finite-rank approximations in a manner that is amenable to data-driven approximation. In \cref{sec:numimplement}, we will take the finite-rank objects built here and translate them into those computable in a data-driven manner. For readers content with the construction in \cref{sec:theory}, one may find the overview in \cref{alg:numerical} sufficient.

\subsection{Spectral approximation of compact operators and practical obstructions}

Our objective is to build a sequence of operators on $H$ that (i) are resolvents of finite-rank, skew-adjoint operators; and (ii) converge strongly to $R_{z,\tau}$ from~\eqref{eq:rzt}. The following result (e.g., \cite[sections~3.6 and~5.1]{Chatelin11}) will provide a working definition of spectrally convergent approximations of compact operators that we will employ in our construction.

\begin{theorem}
    Let $A: \mathbb E \to \mathbb E$ be a compact operator on a Banach space $\mathbb E$ and $A_1, A_2, \ldots$ a sequence of compact operators on $\mathbb E$ that converges compactly to $A$; that is, $A_n \sto A$ and for every bounded sequence $f_1, f_2, \ldots \in \mathbb E$ the sequence $(A - A_n) f_n$ has compact closure. Then, the following hold for every nonzero eigenvalue $\lambda$ of $A$ and every open neighborhood $O \subseteq \mathbb C$ such that $\sigma(A) \cap O = \{ \lambda \}$:
    \begin{enumerate}
        \item There exists $n_* \in \mathbb N$ such that for all $n > n_*$ the set $\sigma(A_n) \cap O$ contains at most $m$ eigenvalues of $A_n$, where $m$ is the multiplicity of $\lambda$. Moreover, the multiplicities of these eigenvalues sum to $m$, and every $\lambda_n \in \sigma(A_n) \cap O$ converges to $\lambda$ as $n\to\infty$.
        \item The spectral projections to the eigenspaces corresponding to  $\sigma(A_n) \cap O$ converge strongly to the spectral projection to the eigenspace of $A$ corresponding to $\lambda$.
    \end{enumerate}
    \label{thm:spec_compact}
\end{theorem}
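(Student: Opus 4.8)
The plan is to localize to a single nonzero eigenvalue of $A$, compare the Riesz spectral projections of $A$ and $A_n$ around it, and replace the norm‑resolvent estimates that power the classical proof (valid under $\lVert A_n - A\rVert \to 0$) by a compactness argument that invokes the hypothesis ``$(A - A_n)f_n$ has compact closure'' exactly where norm control is unavailable. Fix $\lambda \in \sigma(A)\setminus\{0\}$; by Riesz--Schauder theory $\lambda$ is isolated in $\sigma(A)$ with finite algebraic multiplicity $m$ (the rank of its Riesz projection). Since $\lambda$ is isolated we may take $O = D$ a bounded open disk centred at $\lambda$ with $\overline D \cap \sigma(A) = \{\lambda\}$ and $0\notin\overline D$, and set $\Gamma = \partial D$. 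Then $P = \tfrac{1}{2\pi i}\int_\Gamma R_z(A)\,dz$ satisfies $\rank P = m$, and once $\Gamma \subset \rho(A_n)$ the operator $P_n = \tfrac{1}{2\pi i}\int_\Gamma R_z(A_n)\,dz$ is the Riesz projection of $A_n$ for $\sigma(A_n)\cap D$; because $0\notin\overline D$ and $A_n$ is compact, $\ran P_n$ is finite‑dimensional, with $\dim\ran P_n$ equal to the total algebraic multiplicity of $\sigma(A_n)\cap D$.

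The first substantive step is a uniform resolvent bound: for every compact $K\subset\rho(A)$ there are $n_*$ and $c>0$ with $K\subset\rho(A_n)$ and $\sup_{z\in K,\,n\ge n_*}\lVert R_z(A_n)\rVert \le 1/c$. I would establish the lower bound $\lVert(zI-A_n)f\rVert \ge c\lVert f\rVert$ by contradiction: otherwise there are $z_n\to z_*\in K$ and unit vectors $f_n$ with $(z_nI-A_n)f_n\to 0$, and writing $z_n f_n = A f_n + (A_n-A)f_n + o(1)$, compactness of $A$ and the compact‑convergence hypothesis each furnish a convergent subsequence, so $f_n\to f$ with $\lVert f\rVert = 1$; then $A_n f\to Af$ (strong convergence) forces $(z_*I-A)f = 0$, contradicting $z_*\in\rho(A)$. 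Since $A_n/z_n$ is compact, the lower bound (injectivity with closed range) upgrades to invertibility via the Fredholm alternative. Applying the bound with $K = \Gamma$ makes $P_n$ well defined for large $n$; applying it to annuli $\{\delta\le|z-\lambda|\le r\}$ ($r$ the radius of $D$, $\delta$ small) shows that for large $n$ all of $\sigma(A_n)\cap D$ lies within $\delta$ of $\lambda$, so any sequence $\lambda_n\in\sigma(A_n)\cap D$ converges to $\lambda$.

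Next I would show $P_n\to P$, both strongly and \emph{compactly}. The resolvent identity gives $R_z(A_n)y - R_z(A)y = R_z(A_n)(A_n-A)R_z(A)y$, so $\lVert R_z(A_n)y - R_z(A)y\rVert \le \tfrac1c\lVert(A_n-A)R_z(A)y\rVert$; since $z\mapsto R_z(A)y$ has compact image on $\Gamma$ and $A_n\to A$ uniformly on compacta (strong convergence plus the uniform bound $\sup_n\lVert A_n\rVert<\infty$), the right side tends to $0$ uniformly in $z\in\Gamma$, and integrating over $\Gamma$ gives $P_n y\to P y$. For compact convergence one checks that $(P_n-P)f_n$ has compact closure whenever $(f_n)$ is bounded: splitting $R_z(A)f_n = \tfrac1z f_n + \tfrac1z A R_z(A)f_n$, the term $\tfrac1z(A_n-A)f_n$ has compact closure by hypothesis ($\tfrac1z$ is bounded on $\Gamma$ since $0\notin\overline D$), the term $\tfrac1z(A_n-A)AR_z(A)f_n$ is norm‑null because $\lVert(A_n-A)A\rVert\to 0$, and applying the uniformly bounded $R_z(A_n)$ and integrating over the compact contour preserves relative compactness.

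The crux, and the step I expect to be the main obstacle, is rank stabilization: $\rank P_n = m$ for large $n$. Strong convergence gives the easy half $\liminf_n \rank P_n \ge m$, since the images under $P_n$ of a basis of $\ran P$ stay linearly independent for large $n$ and lie in $\ran P_n$. For the reverse, suppose $\rank P_n \ge m+1$ along a subsequence; since $P$ maps $\ran P_n$, of dimension at least $m+1$, into the $m$‑dimensional $\ran P$, there is a unit vector $f_n\in\ran P_n$ with $P f_n = 0$, hence $(P_n - P)f_n = P_n f_n - 0 = f_n$; compact convergence of $P_n$ makes $(f_n)$ relatively compact, and any limit $f$ has $\lVert f\rVert = 1$, $Pf = 0$, yet $f_n = P_n f_n \to Pf = 0$ (strong convergence, using $\sup_n\lVert P_n\rVert<\infty$) — a contradiction. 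Therefore $\rank P_n = m$ for large $n$, so $\sigma(A_n)\cap D$ consists of at most $m$ eigenvalues whose algebraic multiplicities sum to $m$, all converging to $\lambda$ by the previous paragraph; this proves part~(1), and part~(2) is precisely the strong convergence $P_n\to P$ already obtained. For a general open $O$ with $\sigma(A)\cap O = \{\lambda\}$ one runs the argument for a disk $D\subset O$ as above, the same uniform resolvent bound confining the relevant eigenvalues of $A_n$ to $D$ for large $n$.
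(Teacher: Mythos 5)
Your proof is correct, and it follows the standard Anselone–Chatelin treatment (Riesz projections, uniform resolvent lower bound on compact subsets of $\rho(A)$ extracted from the compact-convergence hypothesis, and rank stabilization via compact convergence of $P_n - P$), which is exactly the approach in the reference the paper cites for this result (Chatelin, \S\S 3.6 and 5.1) — the paper itself states the theorem without proof. Your handling of the rank bound $\rank P_n \le m$ via the normalized $f_n \in \ran P_n \cap \ker P$ and the identity $(P_n-P)f_n = f_n$ is the key compactness step and is carried out correctly.
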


As motivation for our approach, we begin by noting that a tentative way of building approximations of $R_{z,\tau}$ would be to approximate the compact operators $S^+_{z,\tau}$ and $S^-_{z,\tau}$ separately by projection onto finite-dimensional subspaces $H_{+,L} \subset H_+$ and $H_{-,L} \subset H_-$; that is, employ $\hat S^+_{z,\tau,L} := \Pi_{+, L} S^+_{z,\tau} \Pi_{+, L}$ and $\hat S^-_{z,\tau,L} := \Pi_{-,L} S^-_{z,\tau} \Pi_{-,L}$ as approximations of $S^+_{z,\tau}$ and $S^-_{z,\tau}$, respectively, where $\Pi_{\pm, L} : H \to H$ are orthogonal projections mapping onto $H_\pm$. With these operators, one could define $\hat R^+_{z,\tau,L} = a_z^{-1}(\hat S^+_{z,\tau,L})$ and $\hat R^-_{z,\tau,L} = J \circ \hat R^+_{z,\tau,L} \circ J$ analogously to the construction of $R^+_{z,\tau}$ and $R^-_{z,\tau}$ from \cref{sec:mainalg}, respectively, and assemble a finite-rank resolvent
\begin{equation}
    \hat R_{z,\tau,L} = \hat R^-_{z,\tau,L} \oplus R_z^0 \oplus \hat R^+_{z,\tau,L},
    \label{eq:rztl_tentative}
\end{equation}
acting on $H_{-,L} \oplus H_0 \oplus H_{+,L}$, with a corresponding finite-rank, skew-adjoint approximate generator $\hat V_{z,\tau,L} = \beta_z(\hat R_{z,\tau,L})$. Note that for well-definition of $\hat R_{z,\tau,L}$ via~\eqref{eq:rztl_tentative} it is important that $\hat R^+_{z,\tau,L}$ and $\hat R^-_{z,\tau,L}$ have $H_+$ and $H_-$ as mutually orthogonal invariant subspaces, respectively.  Arranging for the subspaces $H_{+,L}$ and $H_{-,L}$ to form increasing families towards $H_+$ and $H_-$, respectively, would then imply that $\hat R_{z,\tau,L}$ converges strongly to $R_{z,\tau}$ as $L \to \infty$.

Despite its apparent simplicity, a shortcoming of this approach is that it is difficult to construct appropriate bases for the approximation spaces $H_{+,L}$ and $H_{-,L}$. In particular, aside from special cases involving integral operators $G_\tau$ that commute with $\Pi_{\pm}$, there are no kernel integral operator techniques known to us that can produce orthonormal bases of $H_\pm$ via eigendecomposition (analogously to the eigenbases of $H$ associated with $G_\tau$). In the absence of such methods, we must content ourselves with approximations of $S^+_{z,\tau}$ and $S^-_{z,\tau}$ that do not preserve the invariance of the positive- and negative-frequency subspaces $H_\pm$ under these operators. Building the resolvent and finite-rank generator from these approximations will require solution of an additional eigenvalue problem, as we describe in the following subsection.

\subsection{Finite-rank approximation of the compactified resolvent and associated generator}
\label{sec:finite-rank_generator}

Throughout this section, we restrict the compactified resolvent $R_{z,\tau}$ from~\eqref{eq:rzt} to the codimension-1 subspace $\tilde H \subset H$ of zero mean functions. This restriction is without loss, since we can trivially recover $R_{z,\tau}$ on $H$ by adding to the restricted operator $R_{z,\tau} |_{\tilde H}$ the rank-1 operator $R_z^0(V)$ from~\eqref{eq:rz0}.

Viewing, then, $R_{z,\tau}$ as an operator on $\tilde H$, it follows from the fact that $\lVert G_\tau f\rVert_{\tilde H} < \lVert f \rVert_{\tilde H}$ for any nonzero $f \in \tilde H$ that the spectra of $S^+_{z,\tau}$ and $S^-_{z,\tau}$ are strict, closed subsets of $[0, z^{-1}]$ that do not contain $z^{-1}$. Thus, using $a_z : C_z^+ \to [0, z^{-1}] $ from \cref{sec:mainalg} and similarly defining $\bar a_z : C_z^- \to [0, z^{-1}]$ as $\bar a_z(\lambda) = \lvert \lambda \rvert$, we have that $\sigma(R_{z,\tau}) = a^{-1}_z(\sigma(S^+_{z,\tau})) \cup \bar a^{-1}_z(\sigma(S^-_{z,\tau}))$  is a subset of a closed arc in $C_z$ that does not contain $z^{-1}$; see Fig.~\ref{fig:gammaschem} for a schematic illustration.

\begin{figure}
    \centering
    \includegraphics[width=0.7\textwidth]{"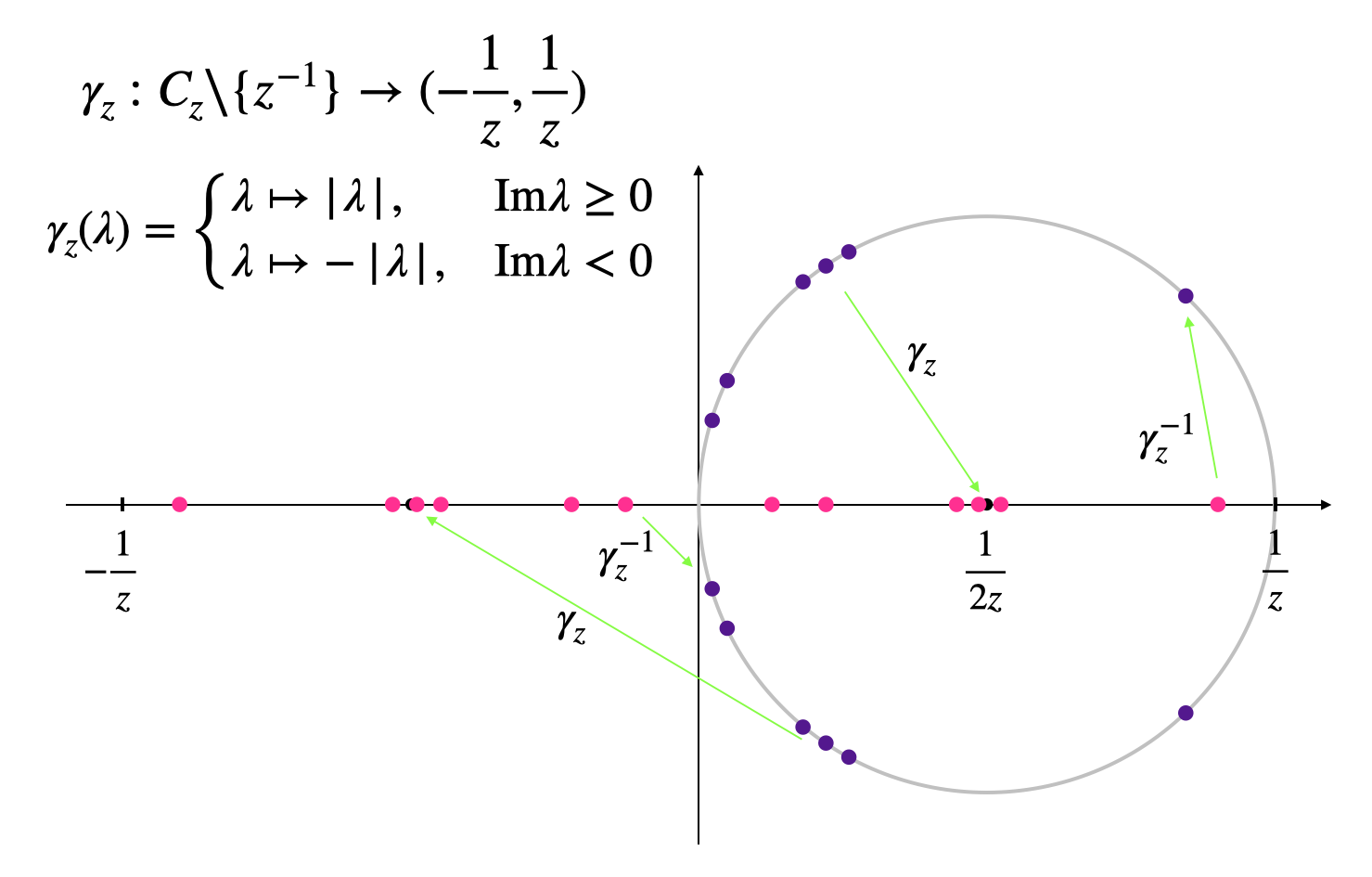"}
    \caption{Schematic of the bijective correspondence of the spectrum of the compactified resolvent $R_{z, \tau}$ (illustrated with purple dots), which lies in the circular arc $C_z \setminus \{ z^{-1} \}$, and the spectrum of the operator $S_{z,\tau}^+ - S_{z,\tau}^-$ (illustrated with magenta dots), which lies in the interval $(-z^{-1}, z^{- 1})$. This correspondence is realized through the function $\gamma_z$ as described in \cref{sec:finite-rank_generator}.}
    \label{fig:gammaschem}
\end{figure}

Let $\tilde C_z = C_z \setminus \{ z^{-1} \}$ and consider the map $\gamma_z : \tilde C_z \to (-z^{-1}, z^{-1})$ defined as
\begin{displaymath}
    \gamma_z(\lambda) =
    \begin{cases}
        \abs{\lambda}, \quad \im\lambda \geq 0,\\
        - \abs{\lambda}, \quad \im\lambda < 0.
    \end{cases}
\end{displaymath}
One readily verifies that $\gamma_z$ is a continuous, injective mapping that maps $\sigma(R_{z,\tau})$ into the closed interval $I_{z,\tau} = [-e_{1,\tau}, e_{1,\tau}]$ (i.e., $I_{z,\tau}$ is the smallest interval containing $\sigma(-S^-_{z,\tau}) \cup \sigma(S^+_{z,\tau})$). Moreover, we have $\gamma_z(R_{z,\tau}) = S^+_{z,\tau} - S^-_{z,\tau}$ and thus
\begin{displaymath}
    R_{z,\tau} = \gamma_z^{-1}(S^+_{z,\tau} - S^-_{z,\tau}).
\end{displaymath}
Note the explicit formula for the inverse of $\gamma_z$,
\begin{displaymath}
    \gamma_z^{-1}(x) =
    \begin{cases}
        a_z^{-1}(x), \quad x \in [0, z^{-1}),\\
        (a_z^{-1}(-x))^*, \quad x \in (-z^{-1}, 0),
    \end{cases}
\end{displaymath}
where $a_z^{-1}$ was defined in~\eqref{eq:ginv}. By continuity of $\gamma_z^{-1}$, we can build approximations of $R_{z,\tau}$ that are spectrally consistent in the sense of \cref{thm:spec_compact} by first approximating $S_{z,\tau} \equiv S^+_{z,\tau} - S^-_{z,\tau}$ by finite-rank operators with spectra contained in closed subintervals of $(-z^{-1}, z^{-1})$, and then applying $\gamma_z^{-1}$ to these operators, as we now describe.

Using the kernel eigenbasis $ \{ \phi_j \}_{j=1}^\infty$ of $\tilde H$ from \cref{sec:kernel}, define the $L$-dimensional approximation spaces $H_L = \spn \{ \phi_1, \ldots, \phi_L \}$ and the corresponding orthogonal projections $\Pi_L : \tilde H \to \tilde H$ with $\ran \Pi_L = H_L$. Define also the finite-rank, projected resolvents $\tilde R_{z,L}^+ : \tilde H \to \tilde H$ as $\tilde R_{z,L}^+ = \Pi_L R_z^+(V) \Pi_L$, and the corresponding positive operators $\tilde S_{z,L}^+ := \lvert \tilde R_{z,L}^+\rvert$. We then have:

\begin{lemma}
    For any $L \in \mathbb N$, the spectrum of $\tilde S_{z,L}^+$ is contained in the interval $[0, z^{-1}]$.
    \label{lem:szl_spec}
\end{lemma}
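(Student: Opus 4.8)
The plan is to reduce the statement to an operator-norm bound. By construction $\tilde S_{z,L}^+ = \lvert \tilde R_{z,L}^+\rvert$ is a non-negative self-adjoint operator on the finite-dimensional space $H_L$, so $\sigma(\tilde S_{z,L}^+) \subseteq [0,\infty)$ is automatic, and the only content of the lemma is the upper bound, i.e. $\lVert \tilde S_{z,L}^+\rVert \leq z^{-1}$. Since $\lVert \lvert A\rvert\rVert = \lVert A\rVert$ for any bounded operator $A$ (because $\lVert \lvert A\rvert\rVert^2 = \lVert \lvert A\rvert^2\rVert = \lVert A^*A\rVert = \lVert A\rVert^2$), it suffices to prove $\lVert \tilde R_{z,L}^+\rVert \leq z^{-1}$.

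To that end I would write $\tilde R_{z,L}^+ = \Pi_L \Pi_+ R_z(V) \Pi_L$, with all operators understood as acting on the $V$-invariant subspace $\tilde H$ (here I use $R_z^+(V) = \Pi_+ R_z(V)$ from \cref{sec:freq}, together with the fact that $\Pi_+$ maps $\tilde H$ into $H_+ \subset \tilde H$). Bounding the operator norm of a product by the product of the operator norms, and using that the orthogonal projections $\Pi_L$ and $\Pi_+$ are contractions, gives $\lVert \tilde R_{z,L}^+\rVert \leq \lVert R_z(V)\rVert$. Finally, since $V$ is skew-adjoint, the spectral theorem gives $\lVert R_z(V)\rVert = 1/\operatorname{dist}(z, \sigma(V))$, and because $\sigma(V) \subseteq i\mathbb R$ while $z > 0$ is real we have $\operatorname{dist}(z,\sigma(V)) \geq \operatorname{dist}(z, i\mathbb R) = z$, hence $\lVert R_z(V)\rVert \leq z^{-1}$ (the identical bound applies to the restriction $\tilde V = V|_{\tilde H}$, whose spectrum is likewise contained in $i\mathbb R$). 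Chaining these inequalities yields $\lVert \tilde S_{z,L}^+\rVert = \lVert \tilde R_{z,L}^+\rVert \leq z^{-1}$, which is the claim.

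There is no genuinely hard step here: the argument is just ``contraction $\times$ contraction $\times$ resolvent of norm at most $z^{-1}$.'' The only points requiring a modicum of care are (i) checking that every operator in sight actually restricts to $\tilde H$ — in particular that $R_z(V)$ leaves $\tilde H$ invariant and that $\Pi_+$ still has range inside $\tilde H$ — so that the composition $\Pi_L \Pi_+ R_z(V)\Pi_L$ makes sense on $H_L$; and (ii) invoking the functional-calculus identity $\lVert R_z(V)\rVert = \operatorname{dist}(z,\sigma(V))^{-1}$ for the normal operator $V$. Neither is an obstacle, so I expect the proof to be very short.
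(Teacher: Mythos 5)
Your proof is correct and follows essentially the same route as the paper: both arguments reduce the claim to $\lVert \tilde R_{z,L}^+\rVert \leq z^{-1}$ via the identity $\lVert \lvert A\rvert\rVert = \lVert A\rVert$ (the paper writes this as $\lVert \tilde S_{z,L}^+\rVert = \sqrt{\lVert \tilde R_{z,L}^{+*}\tilde R_{z,L}^+\rVert}$), then use that $\Pi_L$ and $\Pi_+$ are contractions together with the resolvent bound $\lVert R_z(V)\rVert \leq z^{-1}$ for the skew-adjoint $V$ with $z>0$ real (equivalently, $\lVert S_z^+\rVert \leq z^{-1}$, which the paper cites). The only cosmetic difference is that the paper carries out the contraction step through a chain of quadratic-form inequalities whereas you invoke submultiplicativity directly, which is a bit tidier.
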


\begin{proof}
    Since $\tilde S_{z,L}^+$ is self-adjoint, we have $\lVert \tilde S_{z,L}^+\rVert = \sqrt{\lVert (\tilde S_{z,L}^+)^2\rVert} = \sqrt{\lVert \tilde R_{z,L}^{+*} \tilde R_{z,L}^+\rVert}$. Moreover, for any $f \in \tilde H$, we have
    \begin{align*}
        \langle f, \tilde R_{z,L}^{+*} \tilde R_{z,L}^+ f\rangle &= \langle \Pi_L R_z^+(V) \Pi_L f, \Pi_L R_z^+(V) \Pi_L f \rangle \\
                                                            &\leq \langle R_z^+(V) \Pi_L f, R_z^+(V) \Pi_L f\rangle = \langle \Pi_L f, (S_z^+)^2 \Pi_L\rangle \\
                                                            &\leq \lVert S_z^+\rVert^4 \lVert f\rVert_H^2,
    \end{align*}
    and therefore $\lVert \tilde R_{z,L}^{+*} \tilde R_{z,L}^+\rVert \leq \lVert S_z^+\rVert^2$. We thus conclude that $\lVert \tilde S_{z,L}^+\rVert \leq \lVert S_z^+\rVert$ and $\sigma(\tilde S_{z,L}^+) \subseteq \sigma(S_z^+) \subset [0, z^{-1}]$ since $S_{z,L}^+$ is positive.
\end{proof}

Analogously to $S_z^+$ from~\eqref{eq:Sz}, we can obtain $\tilde S_{z,L}^+$ from a polar decomposition of $\tilde R_{z,L}^+$,
\begin{displaymath}
    \tilde R_{z,L}^+ = W_{z,L} \tilde S_{z,L}^+,
\end{displaymath}
where $W_{z,L} \in B(\tilde H)$ is a partial isometry. Moreover, we introduce operators $\tilde R_{z,L}^-, \tilde S_{z,L}^- \in B(\tilde H)$ associated with the negative-frequency subspace similarly. Using $\tilde S_{z,L}^+$ and $\tilde S_{z,L}^-$, we define the self-adjoint operators $S^+_{z,\tau,L}: \tilde H \to \tilde H$ and $S^-_{z,\tau,L}: \tilde H \to \tilde H$ as
\begin{equation}
        S^+_{z,\tau,L} = \sqrt{\tilde S^+_{z,L}} G_\tau \sqrt{\tilde S^+_{z,L}}, \quad S^-_{z,\tau,L} = \sqrt{\tilde S^-_{z,L}} G_\tau \sqrt{\tilde S^-_{z,L}}.
    \label{eq:sztl}
\end{equation}
By \cref{lem:szl_spec} and the fact that $\Pi_L$ are orthogonal projections, the spectra of $S^+_{z,\tau,L}$ and $-S^-_{z,\tau,L}$ are subsets of the positive and negative halves of $I_{z,\tau}$, respectively. However, note that $\ran S^+_{z,\tau,L}$ and $\ran S^-_{z,\tau,L}$  may not be linearly independent subspaces of $H_L$.

We would like to assemble a single operator that approximates $S_{z,\tau}$ from $S^+_{z,\tau,L}$ and $S^-_{z,\tau,L}$, while avoiding issues due to possible colinearity of $\ran S^+_{z,\tau,L}$ and $\ran S^-_{z,\tau,L}$ (e.g., creation of a non-trivial nullspace of $S_{z,\tau,L}^+ - S_{z,\tau,L}^-$ with formally infinite corresponding eigenfrequencies). With that in mind, we further reduce the rank of $S^+_{z,\tau,L}$ and $S^-_{z,\tau,L}$ and subtract the resulting operators to yield our final approximation of $S_{z,\tau}$. In what follows, $\Psi_\tau^{+,(M)} : \tilde H \to \tilde H$ will denote the orthogonal projection onto the $M$-dimensional subspace of $\tilde H$ spanned by the leading $M$ eigenfunctions of $S^+_{z,\tau}$, and $S^{+, (M)}_{z,\tau} : \tilde H \to \tilde H$ will be the rank-$M$ operator $S^{+,(M)}_{z,\tau} = \Psi_\tau^{+,(M)} S^+_{z,\tau} \Psi_\tau^{+,(M)}$. We similarly define $S^{-,(M)}_{z,\tau} = \Psi_\tau^{-,(M)} S^-_{z,\tau} \Psi_\tau^{-,(M)}$, where $\Psi_\tau^{-,(M)}$ is the orthogonal projection onto $\spn \{ \psi_{1,\tau}^*, \ldots,\psi_{M,\tau}^* \}$, and set $S^{(M)}_{z,\tau} = S^{+,(M)}_{z,\tau} - S^{-,(M)}_{z,\tau}$. For later convenience, we also define $\Psi_\tau^{(M)} = \Psi_\tau^{-,(M)} + \Psi_\tau^{+,(M)}$.

Let us write down an eigendecomposition
\begin{displaymath}
    S^+_{z,\tau,L} \xi_{j,\tau,L} = e_{j,\tau,L} \xi_{j,\tau,L}, \quad j \in \mathbb N,
\end{displaymath}
where the eigenvalues $e_{j,\tau,L}$ are ordered in decreasing order (note that $e_{j,\tau,L} = 0$ for all $j > L$), and the corresponding eigenfunctions $\xi_{j,\tau,L}$ are chosen to be orthonormal in $\tilde H$. By \eqref{eq:rzt-}, $S^-_{z,\tau,L}$ admits the eigendecomposition
\begin{displaymath}
    S^-_{z,\tau,L} \xi^*_{j,\tau,L} = e_{j,\tau,L} \xi^*_{j,\tau,L}.
\end{displaymath}
Fixing a parameter $M \leq L$, we define the reduced-rank operators $S^{+,(M)}_{z,\tau,L}$ and $S^{-,(M)}_{z,\tau,L}$ as
\begin{equation}
    S^{+,(M)}_{z,\tau,L} = \Xi_{\tau,L}^{(M)} S^+_{z,\tau,L} \Xi_{\tau,L}^{(M)}, \quad S^{-,(M)}_{z,\tau,L} = \bar \Xi_{\tau,L}^{(M)} S^-_{z,\tau,L} \bar \Xi_{\tau,L}^{(M)},
    \label{eq:sztlm_pm}
\end{equation}
where $\Xi_{\tau,L}^{(M)} : \tilde H \to \tilde H$ and $\bar \Xi_{\tau,L}^{(M)}: \tilde H \to \tilde H$ are the orthogonal projections onto $\spn \{ \xi_{1,\tau,L}, \ldots, \xi_{M,\tau,L} \}$ and $\spn \{ \xi_{1,\tau,L}^*, \ldots, \xi_{M,\tau,L}^* \}$, respectively.
We then introduce the operators
\begin{equation}
    S^{(M)}_{z,\tau,L} = S_{z,\tau,L}^{+,(M)} - S_{z,\tau,L}^{-,(M)}.
    \label{eq:sztlm}
\end{equation}
These operators will provide our finite-rank approximation of $S_{z,\tau}$. This approximation converges spectrally in the sense of the following proposition.

\begin{proposition}
    With notation as above, let $M \in \mathbb N$ be such that $e_{M+1,\tau} \neq e_{M,\tau}$; that is, $\Psi_\tau^{(M)}$ projects onto an $2M$-dimensional union of eigenspaces of $S^{(M)}_{z,\tau}$. Then, there exists $L_* \in \mathbb N$ such that for all $L > L_*$ the following hold.
    \begin{enumerate}
        \item $S_{z,\tau,L}^{(M)}$ has rank $2M$.
        \item There exists a closed subinterval $\tilde I_{z,\tau}$ of $(-z^{-1}, z^{-1})$ that contains $\sigma(S_{z,\tau,L}^{(M)})$ and $\sigma(-S^-_{z,\tau}) \cup \sigma(S^+_{z,\tau})$.
        \item As $L\to\infty$, $S^{(M)}_{z,\tau,L}$ converges in operator norm, and thus spectrally in the sense of \cref{thm:spec_compact}, to $S_{z,\tau}^{(M)}$.
        \item As $M \to \infty$, $S_{z,\tau}^{(M)}$ converges in operator norm, and thus spectrally in the sense of \cref{thm:spec_compact}, to $S_{z,\tau}$.
    \end{enumerate}
    \label{prop:spec_s}
\end{proposition}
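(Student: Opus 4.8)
\emph{Overall strategy and Part (4).} The plan is to prove the four assertions in the order (4), (3), (1), (2), exploiting throughout the conjugation symmetry of the construction. Since $\Pi_L$ commutes with $J$ (its range is spanned by real functions), $G_\tau$ commutes with $J$ (its kernel is real), and $R_z^-(V) = J R_z^+(V) J$ by~\eqref{eq:r+-}, one checks that $\tilde R_{z,L}^- = J \tilde R_{z,L}^+ J$, hence $S^-_{z,\tau,L} = J S^+_{z,\tau,L} J$ and $\bar\Xi_{\tau,L}^{(M)} = J \Xi_{\tau,L}^{(M)} J$; every statement about the positive-frequency operators therefore transfers to its negative-frequency counterpart under the isometry $J$, and I argue only the ``$+$'' side. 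Part (4) is immediate: $\Psi_\tau^{+,(M)}$ commutes with $S^+_{z,\tau}$, so $S^{+,(M)}_{z,\tau} = \sum_{l=1}^{M} e_{l,\tau}\langle \psi_{l,\tau}, \cdot\rangle \psi_{l,\tau}$ is the rank-$M$ spectral truncation of $S^+_{z,\tau}$ and $\lVert S^+_{z,\tau} - S^{+,(M)}_{z,\tau}\rVert = e_{M+1,\tau}$; because the ``$+$'' and ``$-$'' pieces have ranges in the orthogonal subspaces $H_+$ and $H_-$, this gives $\lVert S_{z,\tau} - S^{(M)}_{z,\tau}\rVert = e_{M+1,\tau} \to 0$ as $M \to \infty$ ($S^+_{z,\tau}$ is trace class). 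Operator-norm convergence of compact operators trivially implies compact convergence, hence spectral convergence in the sense of \cref{thm:spec_compact}; the same remark closes (3) and (1) once the relevant norm convergence is in hand.

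\emph{The main obstacle: norm convergence of $S^+_{z,\tau,L}$.} The delicate point is that $\tilde R_{z,L}^+ = \Pi_L R_z^+(V) \Pi_L$ converges to $R_z^+(V)$ only in the strong operator topology, since $R_z^+(V)$ need not be compact when $\sigma(V)$ is non-discrete; operator-norm convergence must be recovered from the compactness of $G_\tau$. I would proceed as follows. Since $\Pi_L \to I$ strongly on $\tilde H$, one has $(\tilde S_{z,L}^+)^2 = \Pi_L R_z^+(V)^* \Pi_L R_z^+(V) \Pi_L \to R_z^+(V)^* R_z^+(V) = (S^+_z)^2$ strongly; these operators are uniformly bounded in norm by $z^{-2}$ (\cref{lem:szl_spec}), and as $\sqrt{\,\cdot\,}$ is a uniform limit of polynomials on $[0, z^{-1}]$, strong continuity of the square root on this bounded family yields $\sqrt{\tilde S_{z,L}^+} \to \sqrt{S^+_z}$ strongly. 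Writing $B_L = \sqrt{\tilde S_{z,L}^+}$, $B = \sqrt{S^+_z}$, and splitting $B_L G_\tau B_L - B G_\tau B = (B_L - B) G_\tau B_L + B G_\tau (B_L - B)$, both terms vanish in operator norm: $\lVert (B_L - B) G_\tau\rVert \to 0$ because a uniformly bounded sequence converging strongly to $0$ composed with a compact operator converges in norm, while the remaining factors ($B_L$ in the first term, $B$ after taking an adjoint in the second) are bounded by $z^{-1/2}$. Hence $S^+_{z,\tau,L} \to S^+_{z,\tau}$ in operator norm.

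\emph{Parts (3) and (1).} Under the gap hypothesis $e_{M,\tau} > e_{M+1,\tau}$, the norm convergence $S^+_{z,\tau,L} \to S^+_{z,\tau}$ forces, for $L$ large, the leading $M$ eigenvalues of $S^+_{z,\tau,L}$ to be isolated from the remainder of its spectrum, so $\Xi_{\tau,L}^{(M)}$ is the Riesz projection of $S^+_{z,\tau,L}$ around them; since the corresponding resolvents converge uniformly on a fixed contour enclosing $[e_{M,\tau}, \lambda_1^\tau z^{-1}]$, it follows that $\Xi_{\tau,L}^{(M)} \to \Psi_\tau^{+,(M)}$ in operator norm. A three-term splitting, using $\lVert \Xi_{\tau,L}^{(M)}\rVert, \lVert \Psi_\tau^{+,(M)}\rVert \le 1$ and the uniform bound on $\lVert S^+_{z,\tau,L}\rVert$, then gives $S^{+,(M)}_{z,\tau,L} \to S^{+,(M)}_{z,\tau}$ in norm, and subtracting the conjugate statement proves (3). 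For (1), $\Xi_{\tau,L}^{(M)}$, being norm-close to the rank-$M$ projection $\Psi_\tau^{+,(M)}$, has rank exactly $M$ for $L$ large, whence $\rank S^{(M)}_{z,\tau,L} \le 2M$; conversely $S^{(M)}_{z,\tau} = \sum_{l=1}^{M} e_{l,\tau}\big(\langle \psi_{l,\tau}, \cdot\rangle \psi_{l,\tau} - \langle \psi_{l,\tau}^*, \cdot\rangle \psi_{l,\tau}^*\big)$ is diagonal with exactly $2M$ nonzero eigenvalues, each of modulus $\ge e_{M,\tau} > 0$ by the strict positivity of the $e_{l,\tau}$ (\cref{lem:lambda-pos}), so by Weyl's inequality for singular values together with (3), $S^{(M)}_{z,\tau,L}$ has at least $2M$ eigenvalues of modulus $\ge e_{M,\tau}/2$ once $L$ is large, forcing $\rank S^{(M)}_{z,\tau,L} = 2M$.

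\emph{Part (2).} This is the routine part. Since $G_\tau$ is a strict contraction on $\tilde H$ with $\lVert G_\tau|_{\tilde H}\rVert = \lambda_1^\tau < 1$, for every $f \in \tilde H$ we have $\langle f, S^+_{z,\tau,L} f\rangle = \langle \sqrt{\tilde S_{z,L}^+} f,\, G_\tau \sqrt{\tilde S_{z,L}^+} f\rangle \le \lambda_1^\tau \langle f, \tilde S_{z,L}^+ f\rangle \le \lambda_1^\tau z^{-1} \lVert f\rVert_H^2$, using $\lVert \tilde S_{z,L}^+\rVert \le z^{-1}$ from \cref{lem:szl_spec}; thus $0 \le S^+_{z,\tau,L} \le \lambda_1^\tau z^{-1} I$ uniformly in $L$, and compressing by the orthogonal projection $\Xi_{\tau,L}^{(M)}$ preserves this bound, so $0 \le S^{+,(M)}_{z,\tau,L} \le \lambda_1^\tau z^{-1} I$ and, by the conjugation symmetry, $0 \le -S^{-,(M)}_{z,\tau,L} \le \lambda_1^\tau z^{-1} I$, whence $-\lambda_1^\tau z^{-1} I \le S^{(M)}_{z,\tau,L} \le \lambda_1^\tau z^{-1} I$. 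The identical estimate with $S^+_z$ in place of $\tilde S_{z,L}^+$ (and $\lVert S^+_z\rVert \le \lVert R_z(V)|_{\tilde H}\rVert \le z^{-1}$) bounds $\sigma(S^+_{z,\tau})$ and $\sigma(S^-_{z,\tau})$. Hence $\tilde I_{z,\tau} := [-\lambda_1^\tau z^{-1}, \lambda_1^\tau z^{-1}]$, a closed subinterval of $(-z^{-1}, z^{-1})$, contains $\sigma(S^{(M)}_{z,\tau,L})$ for all $L$ and $M$, as well as $\sigma(-S^-_{z,\tau}) \cup \sigma(S^+_{z,\tau})$, which completes the proof.
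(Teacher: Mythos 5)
Your proof is correct and, for claim~(iii), more careful than the paper's. The paper appeals to \cref{lem:compact_conv} to deduce norm convergence of $S^+_{z,\tau,L}$ to $S^+_{z,\tau}$, but that lemma governs operators of the form $B_n A B_n$ with $B_n \to I$ strongly, and $S^+_{z,\tau,L} = \sqrt{\tilde S^+_{z,L}}\,G_\tau\sqrt{\tilde S^+_{z,L}}$ is not of the form $\Pi_L S^+_{z,\tau}\Pi_L$ (the modulus and square root do not commute with $\Pi_L$), so the lemma does not apply verbatim. Your two-step fix---first deriving strong convergence $\sqrt{\tilde S^+_{z,L}} \to \sqrt{S^+_z}$ from $(\tilde S^+_{z,L})^2 \to (S^+_z)^2$ via polynomial approximation, then the splitting $B_L G_\tau B_L - B G_\tau B = (B_L - B)G_\tau B_L + B G_\tau (B_L - B)$ with the compact factor $G_\tau$ upgrading strong to norm convergence---supplies the missing argument; it is the same mechanism underlying \cref{lem:compact_conv}, adapted to $B_L \to B$ rather than $B_L \to I$. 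Your treatments of (ii) and (iv) also diverge usefully: for (ii), the uniform operator bound $\lVert S^+_{z,\tau,L}\rVert \le \lambda_1^\tau z^{-1}$, valid for every $L$ and independent of (iii), replaces the paper's reliance on compact convergence (which only yields the inclusion for $L > L_*$); for (iv), you obtain the explicit rate $\lVert S_{z,\tau} - S^{(M)}_{z,\tau}\rVert = e_{M+1,\tau}$ rather than merely invoking \cref{lem:compact_conv}. One small slip: where you write $0 \le -S^{-,(M)}_{z,\tau,L} \le \lambda_1^\tau z^{-1} I$, the intended inequality is $0 \le S^{-,(M)}_{z,\tau,L} \le \lambda_1^\tau z^{-1} I$; the resulting two-sided bound on $S^{(M)}_{z,\tau,L}$ is nonetheless correct.
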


\begin{proof}
    See \cref{sec:proof_spec_s}.
\end{proof}

By \cref{prop:spec_s}(iii), for fixed $M$ and sufficiently large $L$, $S_{z,\tau,L}^{(M)}$ has exactly $2M$ nonzero eigenvalues that lie in the interval $\tilde I_{z,\tau}$. We will denote these eigenvalues as $e^{(M)}_{j,\tau,L}$ with $j \in \{ -M, \ldots, -1, 1, \ldots, M \}$, and order them as
\begin{displaymath}
    e_{1,\tau,L}^{(M)} \geq \cdots \geq e_{M,\tau,L}^{(M)} > 0 > e_{-M,\tau,L}^{(M)} \geq \cdots \geq e_{-1,\tau,L}^{(M)}.
\end{displaymath}
We let $\psi_{j,\tau,L}^{(M)} \in \tilde H$ be an orthonormal set of eigenfunctions corresponding to $e_{j,\tau,L}^{(M)}$. From~\eqref{eq:sztlm}, we see that the eigenvalues satisfy $e^{(M)}_{-j,\tau,L} = - e^{(M)}_{j,\tau,L}$, and the corresponding eigenfunctions can be chosen so as to satisfy $\psi^{(M)}_{-j,\tau,L} = \psi^{(M)*}_{j,\tau,L}$. Let $\Psi_{\tau,L}^{(M)} : \tilde H \to \tilde H$ be the orthogonal projection onto $\ran S_{z,\tau,L}^{(M)}$ (i.e., the $2M$-dimensional span of the $\psi_{j,\tau,L}^{(M)}$). By \cref{prop:spec_s}(iv), we have that $\Psi_{\tau,L}^{(M)}$ converges strongly to $\Psi_\tau^{(M)}$ as $L \to \infty$.

Next, define the finite-rank operator $\tilde R_{z,\tau,L}^{(M)} = \gamma_z^{-1}(S_{z,\tau,L}^{(M)})$. Note that the well-definition of $\tilde R_{z,\tau,L}^{(M)}$ relies on the fact that the spectrum of $S_{z,\tau,L}^{(M)}$ is a subset of $\tilde I_{z,\tau}$. In particular, $\tilde R_{z,\tau,L}^{(M)}$ has exactly $2M$ nonzero eigenvalues
\begin{equation}
    \vartheta_{j,\tau,L}^{(M)} := \gamma_z^{-1}(e_{j,\tau,L}^{(M)}),
    \label{eq:lambda_approx}
\end{equation}
which come in complex-conjugate pairs, $\vartheta_{j,\tau,L}^{(M) *} = \vartheta_{-j,\tau,L}^{(M)}$, and have $\psi_{j,\tau,L}^{(M)}$ as corresponding eigenfunctions. For these eigenvalues, we define the eigenfrequencies (cf.\ \eqref{eq:vzt})
\begin{equation}
    \omega_{j,\tau,L}^{(M)} = \frac{1}{i} \beta_z(\vartheta_{j,\tau,L}^{(M)}).
    \label{eq:omega_approx}
\end{equation}
By continuity of $\gamma_z^{-1}$ on the closed interval $\tilde I_{z,\tau}$ and \cref{prop:spec_s}(iii), we have that $\tilde R_{z,\tau,L}^{(M)}$ converges in operator norm to $\tilde R_{z,\tau}^{(M)} := \Psi_\tau^{(M)} R_{z,\tau} \Psi_\tau^{(M)}$ as $L \to\infty$.

Letting $\Psi_{\tau}^{(M),\perp} = I - \Psi_\tau^{(M)}$ and $ \Psi_{\tau,L}^{(M),\perp} = I - \Psi_{\tau,L}^{(M)}$ be the complementary projectors to $\Psi_\tau^{(M)}$ and $\Psi_{\tau,L}^{(M)}$, respectively, we define the operators
\begin{displaymath}
    R_{z,\tau,L}^{(M)} = \tilde R_{z,\tau,L}^{(M)} + z^{-1} \Psi_{\tau,L}^{(M),\perp}, \quad R_{z,\tau}^{(M)} = \tilde R_{z,\tau}^{(M)} + z^{-1} \Psi_{\tau}^{(M),\perp}.
\end{displaymath}
These operators have discrete spectra contained in $C_z \setminus \{ 0 \}$ and are resolvents of skew-adjoint operators
\begin{displaymath}
    V_{z,\tau,L}^{(M)} = \beta_z(R_{z,\tau,L}^{(M)}), \quad V_{z,\tau}^{(M)} = \beta_z(R_{z,\tau}^{(M)}) \equiv \Psi_\tau^{(M)} V_{z,\tau} \Psi_\tau^{(M)},
\end{displaymath}
respectively, that have rank $2M$. In particular, the nonzero eigenfrequencies of $V_{z,\tau,L}^{(M)}$ are given by $\omega_{j,\tau,L}^{(M)}$ from~\eqref{eq:omega_approx}, and have $\psi_{j,\tau,L}^{(M)}$ as corresponding orthonormal eigenfunctions. The nonzero eigenfrequencies of $V_{z,\tau}^{(M)}$ are identical to the first $2M$ eigenfrequencies $\omega_{j,\tau}$ of $V_{z,\tau}$ ranked in order of increasing modulus.

We will employ the resolvent $R_{z,\tau,L}^{(M)}$ as an approximation of the compactified resolvent $R_{z,\tau}$ that is accessible from the approximation spaces $\tilde H_L$. By the convergences $\tilde R_{z,\tau,L}^{(M)} \nto \tilde R_{z,\tau}^{(M)}$ and $\Psi_{\tau,L}^{(M)} \sto \Psi_\tau^{(M)}$, we have $R_{z,\tau,L}^{(M)} \sto R_{z,\tau}^{(M)}$ as $L\to\infty$. Moreover, from $\Psi_\tau^{(M)} \sto I$ (which implies $\tilde R_{z,\tau}^{(M)} \nto R_{z,\tau} $) and $\Psi_{\tau}^{(M),\perp} \sto 0$, we get $R_{z,\tau}^{(M)} \sto R_{z,\tau}$ as $M\to\infty$. We therefore conclude that our approximation of $V_{z,\tau}$ by the finite-rank operators $V_{z,\tau,L}^{(M)}$ converges in strong resolvent sense in the iterated limit of $M\to\infty$ after $L\to\infty$.

\subsection{Proof of \cref{prop:spec_s}}
\label{sec:proof_spec_s}

We begin by recalling the following Hilbert space result.

\begin{lemma}
    Let $A: \mathbb H \to \mathbb H$ be a compact, self-adjoint operator on a Hilbert space $\mathbb H$ and $B_1, B_2, \ldots$ a uniformly bounded sequence of self-adjoint operators on $\mathbb H$ that converges strongly to the identity. Then, the sequence $A_1, A_2, \ldots$ with $A_n = B_n A B_n$ converges to $A$ in operator norm.
    \label{lem:compact_conv}
\end{lemma}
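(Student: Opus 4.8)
The plan is to split $A_n - A$ into two pieces, each of which vanishes in operator norm because $A$ is compact and the $B_n$ are uniformly bounded. Set $C := \sup_n \lVert B_n\rVert < \infty$ and start from the identity
\begin{equation*}
    A_n - A = B_n A B_n - A = B_n A (B_n - I) + (B_n - I) A .
\end{equation*}
The second term is the essential one; the first will be reduced to it using self-adjointness and uniform boundedness.

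The key (and, I expect, the only nonroutine) ingredient is the standard fact that composing a compact operator on the right with a uniformly bounded, strongly null sequence gives norm convergence: if $K \in B(\mathbb H)$ is compact, $\sup_n \lVert T_n\rVert < \infty$, and $T_n \sto 0$, then $\lVert T_n K\rVert \nto 0$. I would prove this by contradiction: otherwise there are $\epsilon > 0$, a subsequence (relabeled $T_n$), and unit vectors $x_n$ with $\lVert T_n K x_n\rVert \geq \epsilon$; by compactness of $K$ we may pass to a further subsequence along which $K x_n \to y$ for some $y \in \mathbb H$, and then $\lVert T_n K x_n\rVert \leq \lVert T_n\rVert \, \lVert K x_n - y\rVert + \lVert T_n y\rVert \to 0$, a contradiction.

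Applying this fact with $K = A$ and $T_n = B_n - I$ — which is uniformly bounded (by $C+1$) and converges strongly to $0$ by hypothesis — yields $\lVert (B_n - I) A\rVert \nto 0$, disposing of the second term. For the first term, since $A$ and every $B_n$ are self-adjoint we have $(B_n A (B_n - I))^* = (B_n - I)^* A^* B_n^* = (B_n - I) A B_n$, hence
\begin{equation*}
    \lVert B_n A (B_n - I)\rVert = \lVert (B_n - I) A B_n\rVert \leq \lVert (B_n - I) A\rVert \, \lVert B_n\rVert \leq C \, \lVert (B_n - I) A\rVert \nto 0 .
\end{equation*}
Combining the two estimates gives $\lVert A_n - A\rVert \leq (C + 1)\,\lVert (B_n - I) A\rVert \to 0$, which is the claim. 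The only point worth stating explicitly is that the adjoint manipulation for the first term uses $A^* = A$ and $B_n^* = B_n$; since the $B_n$ are self-adjoint by hypothesis this is immediate, so no extra assumption on the adjoints is needed.
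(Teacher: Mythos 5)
Your proof is correct and follows essentially the same route as the paper's: the same splitting of $A_n - A$ into two pieces, the same reduction of the ``$B_n$-on-the-left'' term to the ``$B_n$-on-the-right'' term via self-adjointness, and the same appeal to the fact that a compact operator left-multiplied by a uniformly bounded strongly-null sequence tends to zero in norm. The only difference is that you supply the subsequence argument for that last fact, which the paper simply cites as standard.
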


\begin{proof}
    Since $A$ is compact and $B_n$ is a uniformly bounded sequence converging strongly to the identity, the sequence $B_n A$ converges to $A$ in operator norm; that is,
    \begin{equation}
        \lim_{n\to\infty} \lVert (I - B_n) A\rVert = 0.
        \label{eq:AB_conv}
    \end{equation}
    Letting $\beta = \sup_n \lVert B_n\rVert$, we have
    \begin{align*}
        \lVert A - A_n \rVert &= \lVert A - B_n A B_n \rVert = \lVert A - B_n A + B_n A - B_n A B_n A \rVert \\
                              &\leq \lVert (I - B_n) A \rVert + \lVert B_n A (I - B_n)\rVert  \leq \lVert (I - B_n) A \rVert + \beta \lVert A(I - B_n)\rVert \\
                              &= (1 + \beta) \lVert (I - B_n) A \rVert,
    \end{align*}
    where we have used self-adjointness of $A$ and $B_n$ to obtain the last equality. The claim of the lemma follows from~\eqref{eq:AB_conv}.
\end{proof}

We use \cref{lem:compact_conv} to prove claims~(iii) and~(iv) of \cref{prop:spec_s}.

\subsubsection*{Proof of claims~(iii) and~(iv).}

Consider first the operators $S_{z,\tau,L}^+$ from~\eqref{eq:sztl}. Since $S_{z,\tau}^+$ is self-adjoint and compact, and $\Pi_L$ is a sequence of orthogonal projections converging strongly to the identity on $\tilde H$, it follows from \cref{lem:compact_conv} that, as $L\to\infty$, $S_{z,\tau,L}^+$ converges to $S_{z,\tau}^+$ in operator norm. It is known that operator norm convergence implies compact convergence \cite[section~3.3]{Chatelin11}. Thus, $S_{z,\tau,L}^+$ converges spectrally to $S_{z,\tau}^+$ in the sense of \cref{thm:spec_compact}. In particular, for any $M$ such that $\Psi_\tau^{+,(M)}$ projects onto a union of eigenspaces of $S_{z,\tau}^+$, we have that the projections $\Xi_{\tau,L}^{(M)}$ converge strongly to $\Psi_\tau^{+,(M)}$. Together, the norm convergence of $S_{z,\tau,L}^+$ to $S_{z,\tau}^+$ and the strong convergence of $\Xi_{\tau,L}^{(M)}$ to $\Psi_\tau^{+,(M)}$ imply that, as $L\to \infty$, $S^{+,(M)}_{z,\tau,L}$ converges to $S_{z,\tau}^{+,(M)}$ in norm.

Repeating these arguments for $S_{z,\tau,L}^-$, we can deduce that $S_{z,\tau,L}^{-,(M)}$ converges to $S_{z,\tau}^{-,(M)}$ in norm. We therefore conclude that $S^{(M)}_{z,\tau,L}$ converges to $S_{z,\tau}^{(M)}$ in norm, and thus spectrally in the sense of \cref{thm:spec_compact}. This proves claim~(iii). The norm convergence of $S_{z,\tau}^{(M)} = \Psi_\tau^{(M)} S_{z,\tau} \Psi_\tau^{(M)}$ in claim~(iv) follows similarly from the fact that $S_{z,\tau}$ is self-adjoint and compact and $\Psi_\tau^{(M)}$ is a sequence of orthogonal projections converging strongly to the identity on $\tilde H$ as $M\to\infty$.

We now continue with the proof of the remaining claims.

\subsubsection*{Proof of claims~(i) and~(ii).}

First, note that the nonzero eigenvalues $e_{j,\tau}^{(M)}$ of $S_{z,\tau}^{(M)}$ satisfy $\lvert e_{j,\tau}^{(M)}\rvert \leq e_{1,\tau}$ and $\lvert e_{j,\tau}^{(M)}\rvert \geq e_{M,\tau}$ by construction.

Fix any $e_\text{max} \in (e_{1,\tau}, z^{-1})$, $e_\text{min} \in (e_{M+1,\tau}, e_{M,\tau})$, and define $\tilde I_{z,\tau} = [-e_\text{max}, e_\text{max}]$. By claim~(iii) and \cref{thm:spec_compact}(i), there exists $L_* \in \mathbb N$ such that for every $L > L_*$, the multiplicities of the eigenvalues of $S_{z,\tau,L}^{(M)}$ in the set $(-e_\text{max}, -e_\text{min}) \cup (e_\text{min}, e_\text{max})$ sum up to $2M$, i.e., $\rank S_{z,\tau,L}^{(M)} \geq 2M$. Since $\rank S_{z,\tau,L}^{(M)} \leq 2M$ by construction, we have $\rank S_{z,\tau,L}^{(M)} = 2M$, proving claim~(ii). Moreover, we have  $\sigma(S_{z,\tau,L}^{(M)}) \subset (-e_\text{max}, -e_\text{min}) \cup \{ 0 \} \cup (e_\text{min}, e_\text{max})$ and $\sigma(S_{z,\tau}) \subset I_{z,\tau} \subset \tilde I_{z,\tau} $. This verifies claim~(i) and completes our proof of \cref{prop:spec_s}.

\section{Numerical implementation}
\label{sec:numimplement}

In this section, we describe a numerical procedure for approximating the compactified resolvent and associated approximate generator from \cref{sec:theory,sec:finite-rank} using data sampled along dynamical trajectories. The first step in the procedure is to build a data-driven dictionary of observables that asymptotically approximates an orthonormal basis of the infinite-dimensional Hilbert space $H$ using eigenfunctions of kernel integral operators. This step is identical to the procedure employed in \cite{DasEtAl21}. In \cref{sec:basis}, we give a cursory overview of the basis construction following a description of the training dataset in \cref{sec:training}. In \cref{sec:op-approx}, we describe a scheme for data-driven operator approximation that employs this basis. Using this scheme and the finite-rank approximations developed in \cref{sec:finite-rank}, we build an approximation of the projection $\Pi_+$ to the positive-frequency subspace and an approximation of the integral formula~\eqref{eq:resolventintegral} for the resolvent via numerical quadrature. These steps are discussed in \cref{sec:filtering,sec:resolvent-quad}, respectively. In \cref{sec:num-compactification}, we describe data-driven analogs of the resolvent compactification and rank reduction schemes from \cref{sec:theory,sec:finite-rank} leading to the approximate resolvent $R_{z,\tau,L}^{(M)}$ and finite-rank generator $V_{z,\tau,L}^{(M)}$. Finally, in \cref{sec:num-eig} we build data-driven approximations of $R_{z,\tau,L}^{(M)}$ and $V_{z,\tau,L}^{(M)}$, and compute the corresponding eigenfrequency--eigenfunction pairs. A high-level summary of the entire procedure is displayed in \cref{alg:numerical}.

\begin{algorithm}
    \caption{Numerical operator approximation for selected resolvent parameter $z >0$, regularization parameter $\tau > 0$, number of basis functions $L \in \mathbb N$, rank parameter $M \leq L$, number of resolvent quadrature nodes $Q \in \mathbb N$, and number of samples $N \in \mathbb N$. The algorithm computes the $L\times L$ matrix representation $\bm V_{z,\tau,N}^{(M)}$ of the finite-rank generator $V_{z,\tau,L,N}^{(M)}$ from~\eqref{eq:res-final-approx}, along with its nonzero eigenfrequencies $\omega_{j,\tau,L,N}^{(M)}$ from~\eqref{eq:eig_gamma_omega} and the corresponding eigenfunctions $\psi_{j,\tau,L,N}^{(M)}$ from~\eqref{eq:psi_data_driven}. $\overline{(\cdot)}$ denotes elementwise complex-conjugation of matrices.}
  \label{alg:numerical}
  \begin{algorithmic}[1]
      \STATE{Compute kernel eigenvectors $ \{ \phi_{j,N}\}_{j=1}^L$ and corresponding eigenvalues $ \{ \lambda_{j,N}^{\tau} \}_{j=1}^L$ from~\eqref{eq:eigGN}. Set $\bm \Lambda_{\tau,N} = \diag(\lambda_{1,N}^{\tau}, \ldots, \lambda_{L,N}^{\tau}) \in \mathbb R^{L\times L}$.}
      \STATE{Filter basis to have only positive Fourier frequencies using~\eqref{eq:Pi_DFT_time_series}: $\phi_{j,N}^+ = \hat \Pi_{+,N} \phi_{j,N} $}.
      \STATE{Compute projected resolvent matrix $\bm{R}_{z, N}^+ \in \mathbb C^{L\times L}$ using the quadrature formula \eqref{eq:res-mat} or~\eqref{eq:res-mat2} with $Q$ nodes and projected shift operator matrix $\bm U_N^+$ obtained from $\phi_{j,N}^{+}$.}
      \STATE{Compute polar decomposition $\bm{R}_{z, N}^{+} = \bm{W}_{z,N} \bm{S}^+_{z,N}$, where $\bm{W}_{z,N}$ is unitary and $\bm{S}^+_{z, N}$ is positive.}
  \STATE{Compute matrix square root $\sqrt{\bm S^+_{z,N}}$}.
  \STATE{Apply regularization: $\bm{S}^+_{z,\tau,N} = \sqrt{\bm{S}^+_{z,N}} \bm\Lambda_{\tau,N}\sqrt{\bm{S}^+_{z,N}}$.}
  \STATE{Compute eigendecomposition of $\bm{S}^+_{z,\tau,N}$, with eigenvalues $e_{l,\tau,L,N}$ and orthonormal eigenvectors $\bm c_{l,\tau,N} \in \mathbb C^L$. }
  \STATE{Set $\bm C_{\tau,N}^{(M)} = (\bm c_{1,\tau,N}, \ldots, \bm c_{M,\tau,N}) \in \mathbb C^{L\times M}$  and $\bm E_{z,\tau,N}^{(M)} = \diag(e_{1,\tau,L,N}, \ldots, e_{M,\tau,L,N}) \in \mathbb R^{M\times M}$. Form the reduced-rank matrices $\bm S_{z,\tau,N}^{+,(M)} = \bm C_{\tau,N}^{(M)} \bm E_{z,\tau,N}^{(M)} \bm C_{\tau,N}^{(M)*}$, $\bm S_{z,\tau,N}^{-,(M)} = \overline{\bm S_{z,\tau,N}^{+,(M)}}$, and $\bm S_{z,\tau,N}^{(M)} = \bm S_{z,\tau,N}^{+,(M)} - \bm S_{z,\tau,N}^{-,(M)}$.}
  \STATE{Compute eigendecomposition of $\bm S_{z,\tau,N}^{(M)}$ with eigenvalues $\vartheta_{l,\tau,L,N}^{(M)}$ and orthonormal eigenvectors $\bm q_{l,\tau,N}^{(M)} \in \mathbb C^L$. Form eigenfunctions $\psi_{l,\tau,L,N}^{(M)}$ with expansion coefficients $\bm q_{l,\tau,N}^{(M)}$ with respect to the $\phi_{j,N}$ basis.}
  \STATE{Set $\bm \Omega_{\tau,N}^{(M)} = \diag(\omega_{-M,\tau,L,N}^{(M)}, \ldots, \omega_{-1,\tau,L,N}^{(M)}, \omega_{1,\tau,L,N}^{(M)}, \ldots, \omega_{M,\tau,L,N}^{(M)}) \in \mathbb R^{L\times L}$, $\bm Q^{(M)}_{\tau,N} = (\bm q_{-M,\tau,N}^{(M)}, \ldots, \bm q_{-1,\tau,N}^{(M)}, \bm q_{1,\tau,N}^{(M)}, \ldots, \bm q_{M,\tau,N}^{(M)}) \in \mathbb C^{L\times 2M}$. Form the matrix $V_{z,\tau,N}^{(M)} = i \bm Q^{(M)}_{\tau,N} \bm\Omega_{\tau,N}^{(M)} \bm Q^{(M)*}_{\tau,N}$.}
    \RETURN{Generator matrix $\bm V_{z,\tau,N}^{(M)}$ and eigenpairs $ \{ (\omega_{j,\tau,L,N}^{(M)}, \psi_{j,\tau,L,N}^{(M)})\}$}.
  \end{algorithmic}
\end{algorithm}

\subsection{Training data}
\label{sec:training}

We make the following standing assumptions on the training data.
\begin{enumerate}[label=(A\arabic*)]
    \item \label[assump]{assump:a1} We have access to time-ordered samples $y_0, \ldots, y_{N-1} $ in a data space $Y$ obtained from a continuous map $F: \mathcal M \to Y$ that is injective on the forward-invariant set $M \subseteq \mathcal M$; that is, $y_n = F(x_n)$ with $x_n \in \mathcal M$. The samples are taken at a fixed sampling interval $\Delta t > 0$ along a dynamical trajectory $x_0, \ldots, x_{N-1} \in \mathcal M$ with $x_n = \Phi^{n\,\Delta t}(x_0)$.
    \item The invariant measure $\mu$ is ergodic under the discrete-time map $\Phi^{\Delta t}$.
    \item \label[assump]{assump:a3} The initial point $x_0$ (and thus the entire orbit $x_0,x_1,\ldots$) lies in $M$, and moreover it lies in the basin of $\mu$. This means that
        \begin{equation}
            \label{eq:mu-av}
            \lim_{N\to\infty}\frac{1}{N} \sum_{n=0}^{N-1} f(x_n) = \int_X f \, d\mu, \quad \forall f \in C(M).
        \end{equation}
    \setcounter{assump}{\value{enumi}}
\end{enumerate}

Note that we do not require knowledge of the state space trajectory $x_0, \ldots, x_{N-1}$. Moreover, while~\eqref{eq:mu-av} holds for $\mu$-a.e.\ $x_0 \in \mathcal M$ by ergodicity, we do not assume that the $x_n $ lie in the support of $\mu$ (which may be a null set with respect to an ambient measure on $\mathcal M$, such as an attractor of a dissipative system). Invariant measures for which~\eqref{eq:mu-av} holds for initial conditions $x_0$ drawn from a positive-measure set with respect to an ambient measure (e.g., Lebesgue measure) are called observable, or physical; e.g., \cite{Young02,Blank17}.

Henceforth, we will let $\mu_N = \sum_{n=0}^{N-1} \delta_{x_n} / N$ denote the sampling measure supported on the trajectory $x_0, \ldots, x_{N-1}$. Equation~\eqref{eq:mu-av} can then be understood as weak-$^*$ convergence of $\mu_N$ to $\mu$ as $N\to\infty$ at fixed $\Delta t$. We also let $\hat H_N = L^2(\mu_N)$ be the $L^2$ space associated with $\mu_N$, equipped with the inner product $\langle f, g \rangle_N = \sum_{n=0}^{N-1} f^*(x_n) g(x_n)/N$, where the elements of $\hat H_N$ are equivalence classes of complex-valued functions on $\mathcal M$ with common values on the trajectory $x_0, \ldots, x_{N-1}$. For simplicity, we shall assume that the points $x_0, \ldots, x_{N-1}$ are distinct---under \crefrange{assump:a1}{assump:a3} this holds aside from trivial cases such as $\mu$ being supported on a singleton set containing a fixed point. The Hilbert space $\hat H_N$ is then $N$-dimensional, and there is a bijective correspondence between elements $f \in \hat{H}_N $ and $N$-dimensional column vectors $\bm f = (f(x_0), \ldots, f(x_{N-1}) )^\top \in \mathbb C^N$. Note that if $f$ is a unit vector with $\lVert f\rVert_{\hat H_N} = 1$ then $\bm f$ has 2-norm $\lVert \bm f\rVert_2 = \sqrt{N}$. We let $\iota_N : C(M) \to \hat H_N$ be the restriction map on continuous functions into $\hat H_N$, i.e., $\iota_N f = \hat f$ where $\hat f(x_n) = f(x_n)$ for all $n \in \{ 0, \ldots, N-1 \}$ (cf.\ $\iota: C(M) \to H$ from \cref{sec:dynamics}). We also let $\tilde H_N$ be the $(N-1)$-dimensional subspace of $\hat H_N$ consisting of zero-mean functions with respect to the sampling measure, i.e., $\tilde H_N = \{ f \in \hat H_N : \langle \bm 1, f \rangle_N \equiv \int_{\mathcal M} f \, d\mu_N = 0 \} $ (cf.\ $\tilde H \subset H$ from \cref{sec:dynamics}). In accordance with the notation laid out in \cref{sec:dynamics}, $\lVert \hat A\rVert$ will denote the operator norm of $\hat A \in B(\hat H_N)$, which is equivalent in this case to any norm on $N\times N$ complex matrices.

\begin{remark}[single vs.\ multiple trajectories]
    While in \crefrange{assump:a1}{assump:a3} we consider that the samples are taken on a single dynamical trajectory, the methods described below can be generalized to training with data from ensembles of shorter trajectories. The basic requirements are that the corresponding sampling measures converge to $\mu$ in the sense of~\eqref{eq:mu-av} and the individual trajectories in the ensemble are sufficiently long for the frequency filtering and quadrature methods in \cref{sec:filtering,sec:resolvent-quad} to be applied.
\end{remark}

\subsection{Basis construction}
\label{sec:basis}

Recall from \cref{sec:kernel} that the integral operators $G_\tau$ used for resolvent compactification are induced from Markovian kernels $p_\tau: \mathcal M \times \mathcal M \to \mathbb R_+$. For the purposes of data-driven approximation, we assume that $p_\tau$ is the pullback of a kernel $q_\tau : Y \times Y \to \mathbb R_+ $ on data space; that is, $p_\tau(x,x') = q_\tau(F(x),F(x'))$. We also assume that available to us is a family of continuous kernels $q_{\tau,N} : Y \times Y \to \mathbb R_+$ and their pullbacks $p_{\tau,N} : \mathcal M \times \mathcal M \to \mathbb R_+$, $p_{\tau,N}(x,x') = q_{\tau,N}(F(x),F(x'))$, such that $p_{\tau,N}$ is symmetric and Markovian with respect to $\mu_N$, i.e.,
\begin{equation*}
    p_{\tau,N}(x,x') = p_{\tau,N}(x',x), \quad p_{\tau,N}(x, x') \geq 0, \quad \int_M p_{\tau,N}(x,\cdot)\,d\mu_N = 1, \quad \forall x, x' \in \mathcal M.
\end{equation*}
We require that, as $N\to\infty$, $p_{\tau,N}$ converges to $p_\tau$, uniformly on $M \times M$. The data-dependent kernels $p_{\tau,N}$ can be constructed analogously to $p_\tau$ by normalization of a data-independent kernel $k: \mathcal M \times \mathcal M \to \mathbb R_+$; see \ref{app:markov}.

Associated with each $p_{\tau,N}$ is a self-adjoint Markov operator $G_{\tau,N} : \hat H_N \to \hat H_N$ defined as $G_{\tau,N} f  = \int_M p_{\tau,N}(\cdot,x) f(x)\,d\mu_N(x)$. Concretely, we represent $G_{\tau,N}$ by the $N\times N$ kernel matrix $\bm G_\tau = [G_{ij}]$ with entries $G_{ij} = p_{\tau,N}(x_i,x_j) / N$. This matrix is a bistochastic matrix (i.e., $\bm G^\top_\tau = \bm G_\tau$ and $\sum_{j=0}^{N-1} G_{ij} = 1$ for all $i \in \{ 0, \ldots, N-1 \}$), and it has the property that if $f \in \hat H_N$ is represented by the column vector $\bm f = (f(x_0), \ldots, f(x_{N-1}))^\top \in \mathbb C^N$, then $\bm g = \bm G_\tau \bm f$ with $\bm g = (g_0, \ldots, g_{N-1})^\top$ is the column matrix representation of $g = G_{\tau,N} f$, i.e., $g_i = g(x_i)$.

Correspondingly, an eigendecomposition of $\bm G_\tau$,
\begin{equation*}
    \bm G_\tau \bm \phi_j = \lambda^\tau_{j,N} \bm \phi_j, \quad \bm \phi_j = (\phi_{0,j,N}, \ldots, \phi_{N-1,j,N})^\top,
\end{equation*}

where the eigenvalues are ordered as $1=\lambda^\tau_{0,N} \geq \lambda^\tau_{1,N} \geq \cdots \geq \lambda^\tau_{N-1,N}$ and the eigenvectors are chosen such that $\bm \phi_0 = (1, \ldots, 1)$ and $\bm \phi_i^\top \bm \phi_j = N \delta_{ij}$, provides an orthonormal basis $ \{ \phi_{0,N},\ldots,\phi_{N-1,N}\}$ of $\hat H_N$ with $\phi_{j, N}(x_n) = \phi_{n, j, N}$, consisting of eigenvectors of $G_{\tau,N}$,
\begin{equation}
    G_{\tau,N} \phi_{j,N} = \lambda^\tau_{j,N} \phi_{j,N}.
    \label{eq:eigGN}
\end{equation}
Similarly to the eigenvectors of $G_\tau$ corresponding to nonzero eigenvalues (see \cref{sec:kernel}), the eigenvectors $\phi_{j,N}$ corresponding to $\lambda_{j,N}^\tau \neq 0$ have continuous representatives $\varphi_{j,N} \in C(M)$ given by
\begin{equation}
    \label{eq:varphi}
    \varphi_{j,N}(x) = \frac{1}{\lambda_{j,N}^\tau} \int_M p_{\tau,N}(x,x')\phi_{j,N}(x')\,d\mu_N(x') = \frac{1}{\lambda_{j,N}^\tau N} \sum_{n=0}^{N-1} p_{\tau,N}(x,x_n) \phi_{n, j, N}.
\end{equation}
Note that since $p_{\tau,N}(x,x_n) = q_{\tau,N}(F(x), y_n)$ and $q_{\tau,N}(\cdot,y_n)$ are known functions, the eigenfunctions $\varphi_{j,N}$ can be evaluated at any $x \in M$ given the corresponding value $y = F(x)$ in data space.

The following convergence result for the eigenbasis of $G_{\tau,N}$ is based on spectral approximation techniques for kernel integral operators on spaces of continuous functions \cite{VonLuxburgEtAl08}.
\begin{lemma}
    \label{lem:basis}
    Under the assumptions on the kernel and training data stated in \cref{sec:kernel,sec:training,sec:basis} the following hold.
    \begin{enumerate}
        \item For every (nonzero) eigenvalue $\lambda^\tau_j$ of $G_\tau$,  $\lim_{N\to\infty} \lambda_{j,N}^\tau = \lambda^\tau_j$, including multiplicities in the sense of \cref{thm:spec_compact}(i).
        \item For every eigenfunction $\phi_j \in H$ of $G_\tau$ corresponding to nonzero eigenvalue $\lambda^\tau_j$, there exists a sequence of eigenfunctions $\phi_{j,N} \in \hat H_N$ of $G_{\tau,N}$ such that $\lim_{N\to\infty} \lVert \varphi_{j,N} - \varphi_j \rVert_{C(M)} = 0$, where $\varphi_j, \varphi_{j,N} \in C(M)$ are the continuous representatives of $\phi_j$  and $\phi_{j,N}$, respectively.
    \end{enumerate}
\end{lemma}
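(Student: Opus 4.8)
The plan is to recast $G_\tau$ and the finite-rank operators $G_{\tau,N}$ as integral operators on the single Banach space $C(M)$, establish \emph{compact convergence} there, apply \cref{thm:spec_compact}, and transfer the conclusions back to $H$ and $\hat H_N$ through the representative formula \eqref{eq:varphi}. Concretely, I would introduce $\mathcal G_\tau,\mathcal G_{\tau,N}\colon C(M)\to C(M)$ defined by $\mathcal G_\tau f = \int_M p_\tau(\cdot,x) f(x)\,d\mu(x)$ and $\mathcal G_{\tau,N} f = \int_M p_{\tau,N}(\cdot,x) f(x)\,d\mu_N(x)$. Since $p_\tau$ and each $p_{\tau,N}$ are continuous on the compact set $M\times M$, Arzel\`a--Ascoli shows these are compact operators on $C(M)$. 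As recalled in \cref{sec:kernel}, the nonzero spectrum of $\mathcal G_\tau$ on $C(M)$ coincides with that of $G_\tau$ on $H$, with eigenspaces in bijection ($\varphi\mapsto\iota\varphi$, the inverse being the representative formula); likewise the nonzero eigenvalues of $\mathcal G_{\tau,N}$ are those of $G_{\tau,N}$, with eigenfunctions related by \eqref{eq:varphi}. It therefore suffices to prove (i) and (ii) with $G_\tau,G_{\tau,N},\phi_j,\phi_{j,N}$ replaced by $\mathcal G_\tau,\mathcal G_{\tau,N},\varphi_j,\varphi_{j,N}$.

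The heart of the argument is to show $\mathcal G_{\tau,N}$ converges to $\mathcal G_\tau$ compactly on $C(M)$, so that the hypotheses of \cref{thm:spec_compact} hold. This needs two ingredients. First, strong convergence: for fixed $f\in C(M)$, split $\mathcal G_{\tau,N}f - \mathcal G_\tau f$ into a \emph{kernel} error $\int_M(p_{\tau,N}(\cdot,x)-p_\tau(\cdot,x))f(x)\,d\mu_N(x)$, bounded in $C(M)$ by $\lVert f\rVert_{C(M)}\sup_{M\times M}\lvert p_{\tau,N}-p_\tau\rvert\to 0$ by the assumed uniform convergence of the kernels, and a \emph{quadrature} error $\int_M p_\tau(\cdot,x)f(x)\,d\mu_N(x)-\int_M p_\tau(\cdot,x)f(x)\,d\mu(x)$, which tends to $0$ uniformly in the evaluation point because $\{p_\tau(x,\cdot)f(\cdot):x\in M\}$ is a compact subset of $C(M)$ and the weak-$^*$ convergence $\mu_N\to\mu$ from \eqref{eq:mu-av} is uniform over compact subsets of $C(M)$. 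Second, collective compactness: the set $\{\mathcal G_{\tau,N}f:\lVert f\rVert_{C(M)}\le 1,\ N\in\mathbb N\}$ is uniformly bounded (Markovianity, \cref{prop:K2}, gives $\lVert\mathcal G_{\tau,N}f\rVert_{C(M)}\le\lVert f\rVert_{C(M)}$) and equicontinuous (the $p_{\tau,N}$, being a uniformly convergent sequence of continuous functions on the compact set $M\times M$, are uniformly equicontinuous), hence relatively compact by Arzel\`a--Ascoli. Strong convergence plus collective compactness imply that $(\mathcal G_\tau-\mathcal G_{\tau,N})f_N$ has compact closure for every bounded sequence $f_N$, which is precisely the compact-convergence hypothesis of \cref{thm:spec_compact}.

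With this, \cref{thm:spec_compact} applied to $A=\mathcal G_\tau$, $A_n=\mathcal G_{\tau,N}$ gives, for each nonzero eigenvalue $\lambda^\tau_j$ of multiplicity $m$ and each small enough open neighborhood $O$ with $\sigma(\mathcal G_\tau)\cap O=\{\lambda^\tau_j\}$, that for large $N$ the eigenvalues of $\mathcal G_{\tau,N}$ in $O$ have multiplicities summing to $m$ and converge to $\lambda^\tau_j$; transferring back to $G_{\tau,N}$ yields (i). It also gives that the spectral projections $P_N$ of $\mathcal G_{\tau,N}$ onto the eigenspaces associated with $\sigma(\mathcal G_{\tau,N})\cap O$ converge strongly in $C(M)$ to the spectral projection $P$ of $\mathcal G_\tau$ onto the $\lambda^\tau_j$-eigenspace. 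When $\lambda^\tau_j$ is simple, $\ran P_N$ is one-dimensional and spanned by a single eigenfunction, and since $P_N\varphi_j\to P\varphi_j=\varphi_j\neq 0$, one may take $\varphi_{j,N}:=P_N\varphi_j$ (an admissible normalization), obtaining $\lVert\varphi_{j,N}-\varphi_j\rVert_{C(M)}\to 0$, which is (ii).

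The step I expect to require the most care is (ii) when $\lambda^\tau_j$ is \emph{degenerate}: then $\ran P_N$ is $m$-dimensional but is an orthogonal sum of eigenspaces of several distinct (though clustering) eigenvalues of $G_{\tau,N}$, so a generic element of $\ran P_N$ — including $P_N\varphi_j$ — is not an eigenfunction, and a prescribed $\varphi_j$ in the limiting eigenspace generally cannot be matched by a scalar multiple of a single eigenfunction from the cluster. The remedy is to choose the enumeration of the approximating eigenfunctions within each cluster (and, where needed, the eigenbasis of the limiting eigenspace) compatibly: using strong convergence of $P_N$ together with a compactness argument — every $C(M)$-normalized sequence drawn from $\ran P_N$ subconverges to an element of the limiting eigenspace, and orthogonality in $\hat H_N$ passes to the limit because $\langle\cdot,\cdot\rangle_{\hat H_N}\to\langle\cdot,\cdot\rangle_H$ along uniformly convergent sequences — one selects, for each $N$, an orthonormal system of eigenfunctions whose continuous representatives converge to a prescribed orthonormal eigenbasis of the limiting eigenspace, as in the corresponding argument of \cite{DasEtAl21}. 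The remaining pieces (the Arzel\`a--Ascoli estimates, the $C(M)$-versus-$L^2$ spectral correspondence, and the uniform weak-$^*$ convergence over compact families) are routine.
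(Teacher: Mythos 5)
The paper's proof is a bare citation to \cite[Appendix~A]{Giannakis21a}, so there is no in-paper argument to compare against line by line. That said, your route is exactly the standard one for this class of results (cf.\ \cite{VonLuxburgEtAl08}) and the technical pieces are sound: passing to the integral operators $\mathcal G_\tau,\mathcal G_{\tau,N}$ on $C(M)$; establishing collective compactness via Markov contractivity (uniform boundedness) and uniform equicontinuity of the kernels $p_{\tau,N}$ (which follows from their uniform convergence to the continuous $p_\tau$); splitting $\mathcal G_{\tau,N}f-\mathcal G_\tau f$ into a kernel error controlled by $\sup_{M\times M}\lvert p_{\tau,N}-p_\tau\rvert$ and a quadrature error controlled by the fact that weak-$^*$ convergence $\mu_N\to\mu$ is automatically uniform over the compact family $\{p_\tau(x,\cdot)f : x\in M\}\subset C(M)$; and combining strong convergence with collective compactness to obtain compact convergence, which is exactly the hypothesis of \cref{thm:spec_compact}. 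The $C(M)$-to-$H$ spectral correspondence via the representative formula \eqref{eq:varphi} is correct because the eigenvalue equation for a nonzero eigenvalue upgrades an $L^2(\mu)$ identity to an identity on all of $M$ through the continuous kernel.

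Your flag on the degenerate-eigenvalue case is the one genuine subtlety, and you have identified it correctly. When $\lambda^\tau_j$ has multiplicity $m>1$ and the approximating eigenvalues of $G_{\tau,N}$ split the cluster, no sequence of single eigenfunctions of $G_{\tau,N}$ converges to an \emph{arbitrarily prescribed} $\varphi_j$ in the limiting eigenspace; the set of subsequential limit points of cluster eigenfunctions is generically a proper subset of the unit sphere of that eigenspace. So the literal reading of ``for every eigenfunction $\phi_j$'' is too strong. What actually holds --- and all the paper uses downstream, e.g.\ in \cref{sec:num-eig} --- is the weaker statement that one may \emph{choose} an orthonormal eigenbasis $\{\phi_j\}$ of $G_\tau$ compatibly with the data-driven cluster, via the subsequence/diagonal extraction you describe (using that $\hat H_N$-orthogonality passes to the $H$-limit for $C(M)$-convergent sequences). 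Your proposal is therefore a correct proof of the statement that is actually needed; the one thing to make explicit is that the phrase ``chosen compatibly'' is doing real work, i.e.\ the multiplicity-$m$ case yields convergence to \emph{some} orthonormal eigenbasis of the limiting eigenspace rather than to a basis fixed in advance.
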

\begin{proof}
    See \cite[Appendix~A]{Giannakis21a}.
\end{proof}

\Cref{lem:basis} in conjunction with ergodicity of $G_\tau$ (\cref{prop:K5}) imply that for sufficiently large $N$, $\lambda_{0,N}^\tau$ is a simple eigenvalue. Henceforth, we will assume that this is the case. A sufficient condition for $\lambda_{0,N}^\tau$ to be simple is that $\bm G_\tau$ has strictly positive elements.

\subsection{Operator approximation}
\label{sec:op-approx}

We use the results of \cref{sec:basis} to perform data-driven approximation of operators on $\tilde H$.

Similarly to \cref{sec:finite-rank_generator}, for an integer parameter $L$ consider the $L$-dimensional subspace $H_L \subset \tilde H$ spanned by the leading $L$ nonconstant eigenfunctions of $G_\tau$, i.e., $H_L = \spn \{\phi_1, \ldots, \phi_L \}$ with corresponding orthogonal projection $\Pi_L : \tilde H \to \tilde H$. Henceforth, we shall assume that $L$ is chosen such that $\lambda^\tau_L \neq \lambda^\tau_{L+1}$ so that $H_L$ is a union of eigenspaces of $G_\tau$. Given a bounded operator $A \in B(\tilde H)$, we define its compression $A_L \in B(\tilde H)$ as $A_L = \Pi_L A \Pi_L$. Since $\{\phi_1, \ldots \}$ is an orthonormal basis of $\tilde H$, as $L\to\infty$, the operators $A_L$ converge to $A$ in the strong operator topology of $B(\tilde H)$. Note that we can equivalently view $A_L$ as an operator from $H_L$ into itself. In particular, $A_L$ is completely characterized through its $L\times L$ matrix representation $\bm A = [A_{ij}]_{i,j=1}^L$ with $A_{ij} = \langle \phi_i, A \phi_j \rangle$.

To make an analogous construction in the data-driven setting, we define $\hat{H}_{L,N} = \spn\{\phi_{1,N}, \ldots, \phi_{L,N}\} \subseteq \tilde H_N$ via the eigenvectors $\phi_{j,N}$ of $G_{\tau,N}$, and use the orthogonal projections $\Pi_{L,N} : \hat H_N \to \hat H_N$ with $\ran\Pi_{L,N} = \hat{H}_{L,N}$ to define compressions $\hat{A}_{L,N} := \Pi_{L,N} \hat A_N \Pi_{L,N}$ of operators $\hat A_N \in B(\hat H_N)$. The operators $\hat{A}_{L,N}$ are represented by $L\times L$ matrices $\bm A_N = [\hat A_{ij}]_{i,j=1}^L$ with entries $A_{ij} = \langle \phi_{i,N}, \hat A_N \phi_{j,N}\rangle_N$. Note that we do not ask that $\hat A_N$ preserves the subspace of zero-mean functions $\tilde H_N \subset \hat H_N$. Note also that $\bm A_N$ can be computed numerically so long as the action of $\hat A_N$ on the basis elements $\phi_{j,N}$ is known.

Next, consider a bounded operator $A \in B(\tilde H)$ and a sequence of operators $\hat A_1, \hat A_2, \ldots$ on $\hat H_N$ that approximates $A$ in the following sense:
\begin{enumerate}[label=(A\arabic*)]
    \setcounter{enumi}{\value{assump}}
    \item \label[assump]{assump:op-approx1} The family $\hat A_N$ is uniformly bounded, i.e., $\sup_N \lVert \hat A_N\rVert < \infty $.
    \item \label[assump]{assump:op-approx2} There is an operator $\tilde A : \tilde C(M) \to \tilde C(M)$ on zero-mean continuous functions such that
        \begin{enumerate}
            \item $ \iota \tilde A = A \iota $.
            \item For every $ f \in \tilde C(M)$, $\lim_{N\to\infty}\lVert (\iota_N \tilde A - \hat A_N \iota_N) f\rVert_{\hat H_N} = 0$.
        \end{enumerate}
\end{enumerate}
Given such an approximation, and a choice of dimension parameter $L$, the $L\times L$ matrix representations of the compressions $A_{L,N}$ consistently approximate the corresponding matrix representation of $A_L$.

\begin{lemma}
    \label{lem:op-approx}
    Let $\bm A$ and $\bm A_N$ be the $L\times L$ matrix representations of $\hat{A}_L$ and $\hat{A}_{L,N}$, respectively, constructed as above. Then under \cref{assump:op-approx1,assump:op-approx2}, and the assumptions of \cref{lem:basis}, we have $\lim_{N\to \infty} \bm A_N = \bm A$ in any matrix norm.
\end{lemma}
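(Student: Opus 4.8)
The plan is to reduce the statement to entrywise convergence of the two matrices. Since $\bm A_N$ and $\bm A$ both have the fixed size $L\times L$, it suffices to show $(\bm A_N)_{ij}\to(\bm A)_{ij}$ for each fixed pair $i,j\in\{1,\ldots,L\}$, after which equivalence of norms on $\mathbb C^{L\times L}$ closes the argument. By construction $(\bm A)_{ij}=\langle\phi_i,A\phi_j\rangle$ and $(\bm A_N)_{ij}=\langle\phi_{i,N},\hat A_N\phi_{j,N}\rangle_N$, so everything comes down to analyzing the scalar sequence $\langle\phi_{i,N},\hat A_N\phi_{j,N}\rangle_N$ and showing it converges to $\langle\phi_i,A\phi_j\rangle$.

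First I would rewrite the target value in terms of continuous functions. Let $\varphi_i,\varphi_j\in C(M)$ be the continuous representatives of $\phi_i,\phi_j$ furnished by \cref{sec:kernel}; since $i,j\ge 1$ these are mean-zero, $\int_M\varphi_j\,d\mu=\langle\bm 1,\phi_j\rangle=0$, so $\varphi_i,\varphi_j\in\tilde C(M)$. Then \cref{assump:op-approx2}(a) gives $A\phi_j=A\iota\varphi_j=\iota\tilde A\varphi_j$, hence $\langle\phi_i,A\phi_j\rangle=\int_M\varphi_i^*\,(\tilde A\varphi_j)\,d\mu$. Because $\varphi_i^*\cdot(\tilde A\varphi_j)\in C(M)$, the ergodic averaging property~\eqref{eq:mu-av} of the sampling measures identifies this integral with $\lim_{N\to\infty}\langle\iota_N\varphi_i,\iota_N\tilde A\varphi_j\rangle_N$. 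It therefore remains to compare $\langle\phi_{i,N},\hat A_N\phi_{j,N}\rangle_N$ with $\langle\iota_N\varphi_i,\iota_N\tilde A\varphi_j\rangle_N$ and show the difference vanishes.

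The heart of the matter is the estimate $\lVert\hat A_N\phi_{j,N}-\iota_N\tilde A\varphi_j\rVert_{\hat H_N}\to 0$. Using $\iota_N\varphi_{j,N}=\phi_{j,N}$, I would split $\hat A_N\phi_{j,N}-\iota_N\tilde A\varphi_j=(\hat A_N\iota_N-\iota_N\tilde A)\varphi_j+\hat A_N\iota_N(\varphi_{j,N}-\varphi_j)$. The first term tends to zero by \cref{assump:op-approx2}(b) applied to the \emph{fixed} function $\varphi_j\in\tilde C(M)$; the second is bounded by $(\sup_N\lVert\hat A_N\rVert)\,\lVert\varphi_{j,N}-\varphi_j\rVert_{C(M)}$, which tends to zero by the uniform bound in \cref{assump:op-approx1}, the convergence $\lVert\varphi_{j,N}-\varphi_j\rVert_{C(M)}\to0$ from \cref{lem:basis}(ii), and the elementary bound $\lVert\iota_N h\rVert_{\hat H_N}\le\lVert h\rVert_{C(M)}$. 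In the same way $\lVert\phi_{i,N}-\iota_N\varphi_i\rVert_{\hat H_N}=\lVert\iota_N(\varphi_{i,N}-\varphi_i)\rVert_{\hat H_N}\to0$. Substituting these two approximations into $\langle\phi_{i,N},\hat A_N\phi_{j,N}\rangle_N$ and applying Cauchy--Schwarz in $\hat H_N$, while noting that $\lVert\iota_N\tilde A\varphi_j\rVert_{\hat H_N}\le\lVert\tilde A\varphi_j\rVert_{C(M)}$ and $\lVert\iota_N\varphi_i\rVert_{\hat H_N}\le\lVert\varphi_i\rVert_{C(M)}$ are $N$-independent constants, shows that $\langle\phi_{i,N},\hat A_N\phi_{j,N}\rangle_N-\langle\iota_N\varphi_i,\iota_N\tilde A\varphi_j\rangle_N\to0$; combined with the previous paragraph this yields $(\bm A_N)_{ij}\to(\bm A)_{ij}$.

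The one genuinely delicate point is the $N$-dependence of the basis functions $\varphi_{j,N}$: \cref{assump:op-approx2}(b) only controls $(\hat A_N\iota_N-\iota_N\tilde A)f$ for a single \emph{fixed} $f$, so it cannot be fed $\varphi_{j,N}$ directly. The remedy, as above, is to peel off the fixed limiting function $\varphi_j$ first and absorb the moving remainder $\varphi_{j,N}-\varphi_j$ through the uniform operator-norm bound of \cref{assump:op-approx1}; note in particular that no boundedness of $\tilde A$ on $C(M)$ is required, since $\tilde A$ is only ever evaluated on the fixed function $\varphi_j$. The remaining ingredients — the reduction to matrix entries, the rewriting of the limit via~\eqref{eq:mu-av}, and the Cauchy--Schwarz bookkeeping — are routine.
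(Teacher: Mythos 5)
The paper delegates the proof of this lemma to the Supplementary Information of ref.~\cite{FreemanEtAl23}, so there is no in-paper argument to compare against. Your proof is correct and self-contained: the reduction to entries, the rewriting of the target entry via \cref{assump:op-approx2}(a) and the ergodic limit~\eqref{eq:mu-av}, and the key splitting $\hat A_N\phi_{j,N}-\iota_N\tilde A\varphi_j=(\hat A_N\iota_N-\iota_N\tilde A)\varphi_j+\hat A_N\iota_N(\varphi_{j,N}-\varphi_j)$ all hold up, and you correctly flag and handle the one delicate point, namely that \cref{assump:op-approx2}(b) is only available for a fixed function, so the moving remainder $\varphi_{j,N}-\varphi_j$ must be absorbed via \cref{assump:op-approx1} and the $C(M)$-convergence of \cref{lem:basis}(ii) rather than fed into (A4)(b). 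You also correctly observe that no boundedness of $\tilde A$ on $C(M)$ is needed since it is only evaluated on the fixed $\varphi_j$, and that $\iota_N$ and $\hat A_N$ act on all of $C(M)$ and $\hat H_N$ respectively, so the fact that $\varphi_{j,N}-\varphi_j$ need not lie in $\tilde C(M)$ causes no trouble. This is exactly the kind of ``fix the limit, absorb the moving part by uniform boundedness'' argument one expects for a two-sided discretization lemma, and it is almost certainly how the cited supplement proceeds.
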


\begin{proof}
    See Supplementary Information (SI) of ref.~\cite{FreemanEtAl23}.
\end{proof}

For our purposes, the main utility of \cref{lem:op-approx} is that it allows us to consistently approximate Koopman operators on $\tilde H$ by shift operators on $\tilde H_N$ \cite{BerryEtAl15,Giannakis19}. For that, let $\hat U_N : \hat H_N \to \hat H_N$ be the circular left shift operator defined as
\begin{equation}
    \label{eq:shift-op}
    \hat U_N f(x_n) =
    \begin{cases}
        f(x_{n+1}), & 0 \leq n \leq N-2,\\
        f(x_{0}), & n = N -1.
    \end{cases}
\end{equation}
As $\hat U_N$ is unitary, the family $\{\hat U_N \}_{N\geq 1}$ is uniformly bounded, $\lVert \hat U_N\rVert = 1$, and satisfies \cref{assump:op-approx1}. Moreover, letting $\tilde U^t : \tilde C(M) \to \tilde C(M)$ be the time-$t$ Koopman operator on zero-mean continuous functions, $\tilde U^t f = f \circ \Phi^t$, we have $\iota \tilde U^t = U^t \iota$ since $\mu$ is an invariant measure of the flow $\Phi^t$. One also verifies using \cref{assump:a3} that $\lim_{N\to\infty}\lVert (\iota_N \tilde U^{\Delta t} - \hat U_N \iota_N) f\rVert_{\hat H_N}$ for any $f \in \tilde C(M)$, and thus that \cref{assump:op-approx2} holds.

By \cref{lem:op-approx}, we conclude that the $L\times L$ projected shift operator matrices $\bm U_N = [\hat U_{ij}]_{i,j=1}^L$ with $\hat U_{ij} = \langle \phi_{i,N}, \hat U_N \phi_{j,N}\rangle_N$ consistently approximate the projected Koopman operator matrix $\bm U = [U_{ij}]_{i,j=1}^L$ with $U_{ij} = \langle \phi_i, U^{\Delta t} \phi_j\rangle$, i.e., $\lim_{N\to\infty} \bm U_N = \bm U$. Note that the matrix elements $\hat U_{ij}$ can be computed directly by applying~\eqref{eq:shift-op} to the kernel eigenvectors $\phi_{j,N}$; explicitly,
\begin{equation}
    \hat U_{ij} = \frac{1}{N} \sum_{n=0}^{N-2} \phi_{n, i, N} \phi_{n + 1, j,N} + \frac{1}{N} \phi_{N - 1, i, N} \phi_{0, j, N}.
    \label{eq:shift_op_ij}
\end{equation}
In the following two subsections we will use our data-driven approximation of the Koopman operator to derive an approximation of (i) the projection $\Pi_+$ onto the positive-frequency subspace (\cref{sec:filtering}); and (ii) the resolvent $R_z(V)$ of the generator (\cref{sec:resolvent-quad}).

\begin{remark}[circular vs.\ non-circular shift operators]
    In addition to $\hat U_N$ from~\eqref{eq:shift-op}, consistent approximations of the Koopman operator can also be obtained using non-circular shift operators, e.g., $\hat U_N' : \hat H_N \to \hat H_N$, where
    \begin{equation}
        \hat U_N' f(x_n) =
        \begin{cases}
            f(x_{n+1}), & 0 \leq n \leq N-2,\\
            0, & n = N -1.
        \end{cases}
        \label{eq:shift-op-noncirc}
    \end{equation}
    The corresponding $L \times L$ shift operator matrices $\bm U_N' = [\hat U'_{ij}]_{i,j=1}^L$, $\hat U_{ij}' = \langle \phi_{i,N}, \hat U_N' \phi_{j,N}\rangle_N$, satisfy $\lim_{N\to\infty} \bm U_N' = \bm U$; however, the $\hat U_N'$ are not unitary operators. Unitarity of $\hat U_N$ will turn out to be useful when we perform data-driven approximation of the resolvent $R_z(V)$ in \cref{sec:resolvent-quad} (see, in particular, \eqref{eq:res-mat}). Therefore, in this work we opt to approximate the Koopman operator using the circular shift operator from~\eqref{eq:shift-op}.
\end{remark}

\subsection{Positive-frequency filtering}
\label{sec:filtering}

Let $ \{ A, \tilde A, \hat A_1, \hat A_2, \ldots\}$ with $A: \tilde H \to \tilde H$, $\tilde A: \tilde C(M) \to \tilde C(M)$, and $\hat A_N : \hat H_N \to \hat H_N$ be a family of operators satisfying \cref{assump:op-approx1,assump:op-approx2}. In this subsection, we build a data-driven approximation of the projected operator
\begin{equation}
    A_+ := \Pi_+ A \Pi_+
    \label{eq:a-pos}
\end{equation}
onto the positive-frequency subspace $H_+$. We will arrive at \eqref{eq:a-pos-data-driven}, which employs the discrete Fourier transform (DFT) to filter out negative frequencies.

For a time unit $S>0$ and natural numbers $m$ and $n$ define the frequency and time domains
\begin{equation}
    \Omega_m = \left[-\frac{2\pi m}{S}, \frac{2 \pi m}{S}\right], \quad T_{m,n} = \left[-\frac{n^2S}{m(2n+1)}, \frac{n^2S}{m(2n+1)}\right],
    \label{eq:freq-time}
\end{equation}
respectively. Fix also a family  $\{h_m : \mathbb R \to \mathbb R_+\}_{m \in \mathbb N}$ of smooth, compactly supported functions with $\supp h_m \subseteq [0, 2\pi m/S]$ that converge pointwise to the characteristic function $\chi_{[0,\infty)}$, i.e., $\lim_{m\to\infty}h_m(\omega) = \chi_{[0,\infty)}(\omega)$ for every $\omega\in \mathbb R$. With this family, define the operators $P_{m,n} : H \to H$ as
\begin{equation}
    P_{m,n} f = \int_{\mathbb R^2} h_m(\omega) \chi_{T_{m,n}}(t) e^{-i\omega t} U^t f \, d(\omega, t).
    \label{eq:Pmn}
\end{equation}
We then have the following approximation lemma.

\begin{lemma}
    \label{lem:pos-freq-approx}
    The operators $P_{m,n}$ converge strongly to $\Pi_+$ in the sense of the iterated limit
    \begin{displaymath}
        \Pi_+ f = \lim_{m\to\infty}\lim_{n\to\infty} P_{m,n} f, \quad \forall f \in H.
    \end{displaymath}
\end{lemma}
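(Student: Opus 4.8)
The plan is to push everything through the Borel functional calculus of the skew-adjoint generator $V$. Write $U^t=\int_{\mathbb R}e^{i\omega t}\,d\hat E(\omega)$, where $\hat E$ is the spectral measure of $V/i$ on $\mathbb R$, so that $\Pi_+=\hat E((0,\infty))$ and $\Pi_0=\hat E(\{0\})$. For fixed $f\in H$ the integrand in \eqref{eq:Pmn} is strongly continuous in $(\omega,t)$ and supported on the compact rectangle $\supp h_m\times T_{m,n}$, so the Bochner integral exists; pairing with an arbitrary $w\in H$, substituting $U^tf=\int_{\mathbb R}e^{i\omega' t}\,d\hat E(\omega')f$, and using scalar Fubini (legitimate since $\langle w,\hat E(\cdot)f\rangle$ has finite total variation and the rectangle has finite Lebesgue measure) one finds $P_{m,n}=g_{m,n}(V)$ with multiplier $g_{m,n}(\omega')=\int_{\mathbb R}h_m(\omega)\,\frac{2\sin(b_{m,n}(\omega'-\omega))}{\omega'-\omega}\,d\omega$, where $b_{m,n}=|T_{m,n}|/2$; equivalently (up to the $2\pi$ in the Fourier normalization implicit in \eqref{eq:Pmn}), $g_{m,n}$ is the truncation of $h_m$ to the frequency window $[-b_{m,n},b_{m,n}]$. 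Note that only three facts about the data are used: $b_{m,n}\to\infty$ as $n\to\infty$ for each fixed $m$; the $h_m$ are uniformly bounded, continuous, and supported in $[0,\infty)$; and $h_m\to\chi_{(0,\infty)}$ pointwise. (The last is the correct statement of the hypothesis, since continuity of $h_m$ together with $\supp h_m\subseteq[0,\infty)$ forces $h_m(0)=0$; this is exactly what makes the atom of $\hat E$ at $0$, i.e.\ the constant subspace, drop out at the end.)

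The convergence is then a two-step scalar argument, in each step invoking the bounded convergence theorem for the scalar spectral measures $\langle f,\hat E(\cdot)f\rangle$, $f\in H$. First, fix $m$ and let $n\to\infty$, so $b_{m,n}\to\infty$: because $h_m$ is smooth with compact support, its Fourier transform is integrable, which yields simultaneously $\sup_n\|g_{m,n}\|_\infty<\infty$ and, by dominated convergence, $g_{m,n}(\omega')\to h_m(\omega')$ for every $\omega'\in\mathbb R$; hence $P_{m,n}f=g_{m,n}(V)f\to Q_mf:=\int_{\mathbb R}h_m(\omega)\,d\hat E(\omega)f$. Second, let $m\to\infty$: the $h_m$ are uniformly bounded (one may take $0\le h_m\le 1$) and $h_m\to\chi_{(0,\infty)}$ pointwise, so a second application of bounded convergence gives $Q_mf\to\int_{\mathbb R}\chi_{(0,\infty)}\,d\hat E\,f=\hat E((0,\infty))f=\Pi_+f$. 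Combining the two limits proves $\Pi_+f=\lim_{m\to\infty}\lim_{n\to\infty}P_{m,n}f$ for all $f\in H$.

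I expect the only genuinely delicate points to be: (a) the Fubini-type interchange underlying the identification $P_{m,n}=g_{m,n}(V)$, which has to be carried out weakly (against test vectors $w$) using strong continuity of $t\mapsto U^tf$ and finiteness of the total variation of $\langle w,\hat E(\cdot)f\rangle$; and (b) the uniform bound $\sup_n\|g_{m,n}\|_\infty<\infty$, which is the one place where smoothness of $h_m$ is essential --- the truncated Dirichlet kernels $\tfrac{2\sin(b_{m,n}\,\cdot)}{\cdot}$ are not uniformly $L^1$, so the bound cannot be read off the convolution and must instead come from integrability of the Fourier transform of $h_m$. Everything else --- the pointwise convergence of $g_{m,n}$, the behaviour at $\omega'=0$, and the two applications of the bounded convergence theorem for spectral measures --- is routine.
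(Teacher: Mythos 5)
Your proof takes essentially the same route as the paper's: realize $P_{m,n}$ as a Borel function of $V$ whose multiplier $g_{m,n}$ is a frequency-truncation of $h_m$, pass to the limit $n\to\infty$ inside the spectral integral, and then let $m\to\infty$. Where you improve on the paper's (rather terse) argument is precisely at the two points you flag as delicate, and both of your observations are correct and address genuine gaps. First, the paper's proof writes $h_m(\omega')=\lim_{n}h_{m,n}(\omega')$ and silently invokes dominated convergence for the spectral integral, but never records the uniform bound $\sup_n\lVert h_{m,n}\rVert_\infty<\infty$ that this requires; your identification of the $L^1$ bound $\lVert\mathcal F[h_m]\rVert_{L^1}$, available because $h_m$ is smooth and compactly supported, is exactly what justifies it (the Dirichlet kernels alone do not, as you say). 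Second, the paper's statement that $\lim_m h_m=\chi_{[0,\infty)}$ pointwise is inconsistent with the standing hypotheses: continuity of $h_m$ together with $\supp h_m\subseteq[0,2\pi m/S]$ forces $h_m(0)=0$ for every $m$, so the actual pointwise limit is $\chi_{(0,\infty)}$. This is not a defect in the lemma but a correction to its proof --- indeed $\chi_{(0,\infty)}$ is precisely what one wants, since $\Pi_+=E(i(0,\infty))$ omits the atom at $0$, whereas the paper's written identity $\Pi_+=\chi_{i[0,\infty)}(V)$ would capture $\Pi_++\Pi_0$. One last small point you touch on: as written, the formula~\eqref{eq:Pmn} for $P_{m,n}$ is missing an overall $1/(2\pi)$ (the $n\to\infty$ limit of your Dirichlet-kernel convolution is $2\pi h_m$, not $h_m$), consistent with the $1/(2\pi)$ appearing in the paper's definition of $\xi_{m,n}$; this is a harmless normalization slip that affects neither your argument nor the paper's.
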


% \begin{proof}
%     See \cref{sec:proof-pos-freq-approx}.
% \end{proof}
\begin{proof}

Define $\xi_{m,n} \in C_0(\mathbb R)$ as
\begin{displaymath}
    \xi_{m,n}(\omega) = \frac{1}{2\pi} \int_{-nS/m}^{nS/m} e^{-i\omega t} \, dt = \frac{1}{2\pi} \int_{\mathbb R} e^{-i\omega t} \chi_{T_{m,n}}(t)\, dt.
\end{displaymath}
Since $h_m$ is smooth and compactly supported, for every $\omega' \in \mathbb R$ we have $h_m(\omega') = \lim_{n\to\infty} h_{m,n}(\omega')$, where $h_{m,n} \in C_0(\mathbb R)$ is defined by
\begin{displaymath}
    h_{m,n}(\omega') = \int_{\mathbb R}  \xi_{m,n}(\omega-\omega') h_m(\omega)\, d\omega = \int_{\mathbb R^2} h_m(\omega)  \chi_{T_{m,n}}(t) e^{-i\omega t} e^{i\omega' t}\, d(\omega, t).
\end{displaymath}
Since $h_m$ converges to $\chi_{[0,\infty)}$ pointwise, for every $f \in H$ we have
\begin{displaymath}
    \Pi_+ f = \chi_{i[0,\infty)}(V) f = \chi_{[0,\infty]}(V/i) f = \lim_{m\to\infty} h_m(V / i) f = \lim_{m\to\infty} \lim_{n\to\infty} h_{m,n}(V/i) f,
\end{displaymath}
where
\begin{displaymath}
    h_{m,n}(V/i) f = \int_{\mathbb R^2} h_m(\omega)  \chi_{T_{m,n}}(t) e^{-i\omega t} e^{V t} f \, d(\omega, t) = \int_{\mathbb R^2} h_m(\omega)  \chi_{T_{m,n}}(t) e^{-i\omega t} U^t f \, d(\omega, t).
\end{displaymath}
The claim of the lemma follows.
\end{proof}

\begin{remark}
    The particular choice of frequency--time domain in~\eqref{eq:freq-time} is made in order to make contact between our approach and the DFT. The convergence results in \cref{lem:pos-freq-approx}, as well as \cref{prop:pos-freq-approx} stated below, remain valid for other domain choices so long as $\Omega_m$ increases to $\mathbb R$ as $m \to \infty$ and $T_{m,n}$ increases to $\mathbb R$ as $n \to\infty$ at fixed $m$.
\end{remark}

\subsubsection{Numerical quadrature}

We approximate $P_{m,n}$ by a family of operators $ \{ P_{m,n,K} \}_{K \in \mathbb N}$ obtained by replacing the integral over the frequency--time domain $\Omega_m \times T_{m,n}\subseteq \mathbb R^2$ by quadrature. Defining $\tilde N = (2n+1) K^2/ n^2$ and the gridpoints $ \{(\omega_k, t_j)\}_{k,j=-K}^K$ where

\begin{equation*}
    \omega_k = \frac{\len(\Omega_m) k}{2K} = \frac{2\pi mk}{S K}, \quad t_j = \frac{\len(T_{m,n}) j}{2K} = \frac{n^2 S j}{m(2n + 1)K}, \quad \omega_k t_j = \frac{2\pi k j }{\tilde N},
\end{equation*}
we set
\begin{equation}
    P_{m,n,K} := \frac{1}{\tilde N} \sum_{k,j=-K}^K h_m(\omega_k) \chi_{T_{m,n}}(t_j) e^{-2\pi i k j / \tilde N} U^{t_j}.
    \label{eq:PmnK}
\end{equation}

By continuity of the cross-correlation function $C_{fg}(t) = \langle g, U^t f\rangle$ for any $f, g \in H$, it follows that $\lim_{K\to\infty}\langle g, P_{m,n,K} f \rangle = \langle g, P_{m,n} f\rangle$, and thus that $P_{m,n,K}$ converges to $P_{m,n}$ strongly. Combining this fact with \cref{lem:pos-freq-approx}, we can express the projected operator $A_+$ from~\eqref{eq:a-pos} as the iterated strong limit
\begin{displaymath}
    A_+ f = \lim_{m\to\infty}\lim_{n\to\infty}\lim_{K\to\infty} A_{m,n,K} f, \quad \forall f \in H,
\end{displaymath}
where $A_{m,n,K} = P_{m,n,K}^* A P_{m,n,K}$.

Consider now the operators $\tilde P_{m,n,K} : \tilde C(M) \to \tilde C(M)$ $\tilde P_{m,n,K}' : \tilde C(M) \to \tilde C(M)$ and $\hat P_{m,n,K,N} : \hat H_N \to \hat H_N$ defined as
\begin{align}
    \nonumber\tilde P_{m,n,K} &= \frac{1}{\tilde N} \sum_{k,j=-K}^K h_m(\omega_k) e^{-2\pi i k j / \tilde N} \tilde U^{t_j}, \\
    \nonumber\tilde P'_{m,n,K} &= \frac{1}{\tilde N} \sum_{k,j=-K}^K h_m(\omega_k) \chi_{T_{m,n}}(t_j) e^{2\pi i k j / \tilde N} \tilde U^{-t_j},\\
    \label{eq:PmnKN}\hat P_{m,n,K,N} &= \frac{1}{\tilde N} \sum_{k,j=-K}^K h_m(\omega_k) e^{-2\pi i k j / \tilde N} \hat U^j_N,
\end{align}
where in the definition of $\hat P_{m,n,K,N}$ we used the shift operator $\hat U_N$ based on the sampling interval $\Delta t = nS/(mK)$. One readily verifies that for each $m,n,K \in \mathbb N$, the family of operators $ \{ P_{m,n,K}, \tilde P_{m,n,K}, \hat P_{m,n,K,1}, \hat P_{m,n,K,2}, \ldots\}$ satisfies \cref{assump:op-approx1,assump:op-approx2}. Similarly, these assumptions are satisfied by the family $ \{ P^*_{m,n,K}, \tilde P'_{m,n,K}, \hat P^*_{m,n,K,1}, \hat P^*_{m,n,K,2}, \ldots\}$.
We thus conclude that $ \{ A^+_{m,n,K}, \tilde A^+_{m,n,K}, \hat A^+_{m,n,K,1}, \hat A^+_{m,n,K,2},\ldots \}$ with
\begin{gather*}
    A^+_{m,n,K} = P^*_{m,n,K} A \tilde P_{m,n,K}, \quad \tilde A^+_{m,n,K} = \tilde P_{m,n,K}^* \tilde A \tilde P_{m,n,K}, \\
    \hat A^+_{m,n,K,N} =  \hat P^*_{m,n,K,N} \hat A_N \hat P_{m,n,K,N}
\end{gather*}
also satisfies \cref{assump:op-approx1,assump:op-approx2}.

Define now the functions $ \{ \phi^+_j \in H_+ \}_{j\in \mathbb N}$ and $\{\phi^+_{j,m,n,K,N} \in \hat H_N\}_{j=1}^{N-1}$ as $\phi^+_j = \Pi_+ \phi_j$ and $\phi^+_{j,m,n,K,N} = \hat P_{m,n,K,N} \phi_{j,N}$. The following is a corollary of \cref{lem:op-approx} that summarizes our approximation of positive-frequency projected operators.

\begin{proposition}
    Given $m,n,K,L,N \in \mathbb N$ and with notation as above, let $\bm A^+ = [A^+_{ij}]_{i,j=1}^L$ and $\bm A_{m,n,K,N}^+ = [\hat A^+_{ij}]_{i,j=1}^L$ be the $L \times L$ matrix representations of $A_L^+ := \Pi_L A^+ \Pi_L$ and $ A^+_{m,n,K,L,N} := \Pi_{L,N} \hat A_{m,n,K,N} \Pi_{L,N}$, i.e.,
    \begin{align*}
        A^+_{ij} &= \langle \phi_i, A^+ \phi_j\rangle = \langle \phi^+_i, A \phi^+_j\rangle,\\
        \hat A^+_{ij} &= \langle \phi_{i,N}, \hat A_{m,n,K,N}^+ \phi_{j,N}\rangle_N = \langle \phi^+_{i,m,n,K,N}, \hat A_{m,n,K,N} \phi^+_{j,m,n,K,N}\rangle_N.
    \end{align*}
    Then, $\bm A_{m,n,K,N}^+$ converges to $\bm A^+$ in the iterated limit
    \begin{displaymath}
        \lim_{m\to\infty}\lim_{n\to\infty}\lim_{K\to\infty}\lim_{N\to\infty} \bm A_{m,n,K,N}^+ = \bm A^+.
    \end{displaymath}
    \label{prop:pos-freq-approx}
\end{proposition}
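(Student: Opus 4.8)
The plan is to peel the four nested limits off one at a time, in exactly the order in which they are written, so that no interchange of limits is ever required. The innermost limit $N\to\infty$, at fixed $(m,n,K)$, is handled directly by \cref{lem:op-approx}; the three outer limits $K\to\infty$, then $n\to\infty$, then $m\to\infty$ are then obtained from the iterated strong limit $A_+f=\lim_{m\to\infty}\lim_{n\to\infty}\lim_{K\to\infty}A_{m,n,K}f$ ($f\in H$, with $A_{m,n,K}=P^*_{m,n,K}AP_{m,n,K}$) already established earlier in this subsection, combined with the fact that the passage $B\mapsto$ (matrix representation of $\Pi_L B\Pi_L$) from $B(\tilde H)$ to $\mathbb C^{L\times L}$ is continuous from the weak operator topology to any matrix norm.

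More concretely, first fix $m,n,K\in\mathbb N$. As noted just before the statement, the family $\{A^+_{m,n,K},\tilde A^+_{m,n,K},\hat A^+_{m,n,K,1},\hat A^+_{m,n,K,2},\ldots\}$ satisfies \cref{assump:op-approx1,assump:op-approx2}, and the kernel and training-data hypotheses of \cref{lem:basis} are in force; applying \cref{lem:op-approx} with the fixed dimension $L$ (chosen so that $\lambda^\tau_L\neq\lambda^\tau_{L+1}$) yields $\lim_{N\to\infty}\bm A^+_{m,n,K,N}=\bm B_{m,n,K}$ in any matrix norm, where $\bm B_{m,n,K}=[\langle\phi_i,A^+_{m,n,K}\phi_j\rangle]_{i,j=1}^L$ is the matrix of the compression $\Pi_L A^+_{m,n,K}\Pi_L$. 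Next I would record the (elementary) continuity claim from the first paragraph: if $B_\alpha\to B$ in the weak operator topology of $B(\tilde H)$, then $\langle\phi_i,B_\alpha\phi_j\rangle\to\langle\phi_i,B\phi_j\rangle$ for each $i,j$ in the finite range, so the matrices of $\Pi_L B_\alpha\Pi_L$ converge in every norm to that of $\Pi_L B\Pi_L$ (and, since $\phi_i,\phi_j\in H_L$, the matrix of $\Pi_L B\Pi_L$ equals that of $B$). Since $A^+_{m,n,K}$ coincides on $\tilde H$ with the operator $A_{m,n,K}$ appearing in the established iterated strong limit --- the $\tilde P_{m,n,K}$ in its defining formula being the $C(M)$-level surrogate of $P_{m,n,K}$ via $\iota\tilde P_{m,n,K}=P_{m,n,K}\iota$ --- we have $\bm B_{m,n,K}=$ matrix of $A_{m,n,K}$. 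Applying the continuity claim to $\lim_{m}\lim_{n}\lim_{K}A_{m,n,K}=A_+$ then gives $\lim_{m}\lim_{n}\lim_{K}\bm B_{m,n,K}=\bm A^+$, and substituting $\bm B_{m,n,K}=\lim_{N\to\infty}\bm A^+_{m,n,K,N}$ and composing the nested limits produces the asserted four-fold iterated limit.

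I do not expect a genuine analytic obstacle here; the proof is essentially bookkeeping on top of \cref{lem:op-approx} and the already-proved iterated strong limit for $A_+$. The one point that warrants care is confirming that the innermost $N\to\infty$ limit can legitimately be taken at fixed $(m,n,K)$, i.e., that the triple $\{A^+_{m,n,K},\tilde A^+_{m,n,K},\hat A^+_{m,n,K,N}\}$ really satisfies \cref{assump:op-approx1,assump:op-approx2} --- uniform boundedness in $N$ and the $\iota$-compatibility of \cref{assump:op-approx2} --- which was already verified when these operators were introduced (the boundedness following from $\|\hat U_N\|=1$ and $\sup_m\|h_m\|_\infty<\infty$). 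Should one instead want a self-contained derivation of the three outer limits rather than quoting the pre-established iterated strong limit, the only additional work is to check that strong convergence and uniform boundedness of $P_{m,n,K}$ as $K\to\infty$, of $P_{m,n}$ as $n\to\infty$, and of $P_m=h_m(V/i)$ as $m\to\infty$ propagate through the sandwiched products $P^*_{(\cdot)}AP_{(\cdot)}$; this is routine because the middle factor $A$ is a fixed bounded operator and strong convergence is stable under multiplication by uniformly bounded factors.
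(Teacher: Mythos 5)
Your argument is correct and is precisely the reasoning the paper intends: the text labels the proposition ``a corollary of \cref{lem:op-approx}'' and leaves the proof implicit, and you supply exactly the two ingredients needed—\cref{lem:op-approx} for the innermost $N\to\infty$ limit at fixed $(m,n,K)$, followed by the continuity of the matrix-element map $B\mapsto[\langle\phi_i,B\phi_j\rangle]_{i,j=1}^L$ under strong (hence weak) operator convergence applied to the already-established iterated limit $A_+f=\lim_m\lim_n\lim_K A_{m,n,K}f$. The one bookkeeping point you flag (that the verification of \cref{assump:op-approx1,assump:op-approx2} for the sandwiched family was already done in the text just before the proposition) is indeed the only thing that needed checking, so no gap remains.
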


\begin{remark}
    Aside from~\eqref{eq:PmnKN}, other schemes could be employed to build the approximate projections $\hat{P}_{m,n,K,N}$, e.g., high-order quadrature, or convolution-based methods \cite{ColbrookEtAl21}. Such methods may be able to attain a sufficiently fast rate of convergence with respect to $K$ that allows combining the iterated limits of $n\to\infty$ after $K\to\infty$ to a single $K\to\infty$ limit over an increasing integration domain $T_{m,n_K}$. At the same time, however, the use of high-order quadrature imposes regularity conditions on the cross-correlation function $C_{fg}$ that may not hold for arbitrary $f, g \in H$ (effectively restricting the class of basis functions for which \cref{prop:pos-freq-approx} holds). For instance, if $g$ does not lie in the domain of the generator $V$ then $C_{fg}$ is not continuously differentiable, which creates obstacles to approximation of $\langle f, P_{m,n} g \rangle$ via high-order quadrature.
\end{remark}

When convenient, we will use the abbreviated notations $\phi_{j,N}^+ \equiv \phi_{j,m,n,K,N}^+$ for the positive-frequency projected basis functions and $\bm A_N^+ \equiv \bm A_{m,n,K,N}^+$ for the corresponding projected operator matrices. Convergence of $\bm A^+_N$ to $\bm A^+$ will be understood in the sense of \cref{prop:pos-freq-approx}.

\subsubsection{Discrete Fourier transform}

In the experiments of \cref{sec:examples}, we assume that we are given $N$ samples at a fixed sampling interval $\Delta t$ (see \cref{sec:training}), and we take $N$ to be odd for simplicity. We then use the special case of $P_{m,n,K,N}$ with $S = 2\,\Delta t$, $m=1$, and $n=K=(N-1)/2$. With this choice, the frequency domain $\Omega_1 = [-\pi/ \Delta t, \pi/\Delta t]$ is consistent with the Nyquist--Shannon theorem for the sampling interval $\Delta t$. Moreover, we have $\tilde N = N$ and the operator $ P_{1,K,K,N}$ becomes
\begin{displaymath}
    P_{1,K,K,N} = \frac{1}{N} \sum_{k,j=-K}^K h_1(\omega_k) e^{-2\pi i k j / N} \hat U_N^j.
\end{displaymath}
Choosing, further, the window function $h_1$ such that $h_1(\omega_j) = 1$ for all $j = 1, \ldots, K -1 $, our final approximation for the positive-frequency projection is given by $\hat \Pi_{+,N} : \hat H_N \to \hat H_N$, where
\begin{equation}
    \hat \Pi_{+,N} = \frac{1}{N} \sum_{k=1}^K \sum_{j=-K}^K e^{-2\pi i k j / N} \hat U_N^j.
   \label{eq:Pi_DFT}
\end{equation}
This leads to the projected operator
\begin{equation}
    \hat A_{+,N} := \hat \Pi_{+,N}^* \hat A_N \hat \Pi_{+,N},
    \label{eq:a-pos-data-driven}
\end{equation}
which provides (through its matrix representation), a data-driven approximation of $A_+$ from~\eqref{eq:a-pos}. Since, for any $f \in \hat H_N$,
\begin{equation}
    \hat \Pi_{+,N} f(x_l) = \frac{1}{N} \sum_{k=1}^K \sum_{j=-K}^K e^{-2\pi i k j / N} f(x_{j+l}),
    \label{eq:Pi_DFT_time_series}
\end{equation}
$\hat \Pi_{+,N} f$ can be directly computed by applying a fast Fourier transform (FFT) to the $N$-dimensional vector representing $f$, zeroing-out the non-positive frequencies, and then applying an inverse FFT.

Aside from~\eqref{eq:Pi_DFT_time_series}, DFT-based computation of $P_{m,n,K,N} f$ is possible for other values of $n, K > (N-1)/ 2$, using zero-padding of the vector representing $f$ as appropriate. Working with frequency window functions other than $h_1(\omega_j) =1$ used in~\eqref{eq:Pi_DFT} would also be amenable to usage of DFT. Here, we do not consider such possibilities further since we found that~\eqref{eq:Pi_DFT_time_series} yields satisfactory results, but doing so would be an interesting direction for future work.

% \subsubsection{Proof of \cref{lem:pos-freq-approx}}
% \label{sec:proof-pos-freq-approx}

% Define $\xi_{m,n} \in C_0(\mathbb R)$ as
% \begin{displaymath}
%     \xi_{m,n}(\omega) = \frac{1}{2\pi} \int_{-nS/m}^{nS/m} e^{-i\omega t} \, dt = \frac{1}{2\pi} \int_{\mathbb R} e^{-i\omega t} \chi_{T_{m,n}}(t)\, dt.
% \end{displaymath}
% Since $h_m$ is smooth and compactly supported, for every $\omega' \in \mathbb R$ we have $h_m(\omega') = \lim_{n\to\infty} h_{m,n}(\omega')$, where $h_{m,n} \in C_0(\mathbb R)$ is defined by
% \begin{displaymath}
%     h_{m,n}(\omega') = \int_{\mathbb R}  \xi_{m,n}(\omega-\omega') h_m(\omega)\, d\omega = \int_{\mathbb R^2} h_m(\omega)  \chi_{T_{m,n}}(t) e^{-i\omega t} e^{i\omega' t}\, d(\omega, t).
% \end{displaymath}
% Since $h_m$ converges to $\chi_{[0,\infty)}$ pointwise, for every $f \in H$ we have
% \begin{displaymath}
%     \Pi_+ f = \chi_{i[0,\infty)}(V) f = \chi_{[0,\infty]}(V/i) f = \lim_{m\to\infty} h_m(V / i) f = \lim_{m\to\infty} \lim_{n\to\infty} h_{m,n}(V/i) f,
% \end{displaymath}
% where
% \begin{displaymath}
%     h_{m,n}(V/i) f = \int_{\mathbb R^2} h_m(\omega)  \chi_{T_{m,n}}(t) e^{-i\omega t} e^{V t} f \, d(\omega, t) = \int_{\mathbb R^2} h_m(\omega)  \chi_{T_{m,n}}(t) e^{-i\omega t} U^t f \, d(\omega, t).
% \end{displaymath}
% The claim of the lemma follows.

\subsection{Approximation of the resolvent}
\label{sec:resolvent-quad}

Recall that the finite-rank approximation scheme discussed in \cref{sec:finite-rank_generator} employs finite-rank approximations $\tilde R_{z,L}^+$ and $\tilde R_{z,L}^-$ of the positive- and negative-frequency projected resolvents $R_z^+$ and $R_z^-$, respectively. In order to build data-driven analogs of these approximations, we take advantage of the integral representation of the resolvent \cref{eq:resolventintegral}, reproduced here for convenience:
\begin{equation*}
    R_z(V) = \int_0^\infty e^{-zt} U^t \, dt, \quad \re z > 0.
\end{equation*}
As with our approximation of the projection $\Pi_+$ in \cref{sec:filtering}, given a time unit $S'>0$, we introduce an increasing sequence $T_1,T_2,\ldots$ of time domains, $T_\ell = [0, \ell S']$, and approximate $R_z(V) \in B(\tilde H)$ by the operators $R_{z,\ell} \in B(\tilde H)$, where
\begin{equation}
    R_{z,\ell} = \int_0^{\ell S'} e^{-zt} U^t \, dt.
    \label{eq:Rzl}
\end{equation}
Due to the exponential decay of the $e^{-zt}$ term in the integrand, as $\ell\to\infty$, $R_{z,\ell}$ converges to $R_z(V)$ in operator norm.

We further approximate $R_{z,\ell}$ by a sequence of operators $\{ R_{z,\ell,1}, R_{z,\ell,2}, \ldots \} $, where $R_{z,\ell,Q} : H \to H $ is obtained from a quadrature rule with nodes $t_0, \ldots, t_Q \in T_\ell$ and corresponding weights $w_0, \ldots, w_Q \in \mathbb R_+$, viz.
\begin{equation}
    R_{z,\ell,Q} = \sum_{q=0}^Q w_q e^{-z t_q} U^{t_q}.
    \label{eq:RlQ}
\end{equation}
In the experiments of \cref{sec:examples}, we use the composite Simpson rule where the nodes are equispaced, $t_q = (q - 1) \, \Delta t$ with $\Delta t = \ell S' / Q$, and the weights have the values $w_0 = w_Q = \Delta t/3 $, $w_{2q+1} = 2\,\Delta t/3$, and $w_{2q} = 4\,\Delta t/3$. The resulting approximations from~\eqref{eq:RlQ} converge in the strong operator topology, i.e., $\lim_{Q\to\infty} R_{z,\ell,Q} f = R_{z,\ell} f$ for every $f \in \tilde H$. Explicit rates of convergence can be obtained under additional assumptions on $f$. For instance, if $f$ lies in a finite-dimensional Koopman-invariant subspace of $\tilde H$, then $\mathbb R \ni t \mapsto U^t f$ is a $C^\infty$ function in the norm of $\tilde H$ and the error $\lVert (R_{z,\ell,Q} - R_{z,\ell})f\rVert_H$ is $O(Q^{-3})$. Of course, we may have to contend with slower rates of convergence as such assumptions may not hold in practice. A natural alternative to a composite Simpson scheme given the exponentially decaying integrand in~\eqref{eq:Rzl} is Gauss--Laguerre quadrature.

Using $R_{z,\ell,Q}$, we obtain a strongly convergent approximation of the positive-frequency projected resolvent as
\begin{equation}
    R_z^+(V) f = \lim_{\ell \to \infty} \lim_{Q \to \infty} R_{z,\ell,Q}^+ f, \quad \forall f \in \tilde H,
    \label{eq:res-conv}
\end{equation}
where $R_{z,\ell,Q}^+ = \Pi_+ R_{z,\ell,Q}\Pi_+$.

Next, passing to the data-driven setting, we approximate $R_{z,\ell,Q}$ by $\hat R_{z,\ell,Q,N} : \hat H_N \to \hat H_N$,
\begin{equation}
    \hat R_{z,\ell,Q,N} = \sum_{q=0}^Q w_q e^{-z q \, \Delta t} \hat U^q_N,
    \label{eq:RlQN}
\end{equation}
where $\hat U_N$ is the shift operator on $\hat H_N$ from \eqref{eq:shift-op}. In this last approximation we have tacitly identified the interval $\Delta t$ used to build $R_{z,\ell,Q}$ in~\eqref{eq:RlQ} with the sampling interval of the data. It is also natural to set $S' = \Delta t$ so that $\ell$ and $Q$ represent the number of timesteps within the integration domain $T_\ell$.

Since $R_{z,\ell,Q}$ and $\hat R_{z,\ell,Q,N}$ are given by finite linear combinations of Koopman and shift operators, respectively (i.e., they satisfy \cref{assump:op-approx1,assump:op-approx2}), the projected operators $R_{z,\ell,Q}^+$ can be approximated by positive-frequency projections of $\hat R_{z,\ell,Q,N}$ using \cref{prop:pos-freq-approx} in conjunction with~\eqref{eq:res-conv}. Specifically, given $L \leq N - 1$ and for each $q \in \{0, \ldots, Q\}$, we can compute the $L \times L$ positive-frequency projected shift operator matrix $\bm U^{+,q}_N = [\hat U^{+,q}_{ij}]_{i,j=1}^{L-1}$ with elements $\hat U_{ij}^{+,q} = \langle \phi_{i,N}^+, \hat U_N^q \phi_{j,N}^+\rangle_N$ (cf.~\eqref{eq:shift_op_ij}), i.e.,
\begin{equation*}
    \hat{U}^{+,q}_{ij} = \frac{1}{N}\sum_{n = 0}^{N - 1 - q} \phi_{n, i, N}^{+} \phi_{n + q, j,N}^{+} + \frac{1}{N}\sum_{n=N-q}^{N-1}\phi_{n, i, N}^{+} \phi_{n-N-q, j, N}^{+},
\end{equation*}
and the matrix
\begin{equation}
    \bm R_{z,N}^+ = \sum_{q=0}^Q w_q e^{-zq\,\Delta t} \bm U_N^{+,q}.
    \label{eq:res-mat}
\end{equation}
The matrix $\bm R^+_{z,N}$ represents the projected operator
\begin{equation*}
    \hat  R^+_{z,\ell,Q,L,N} = \Pi_{L,N} \hat  R^+_{z,\ell,Q,N} \Pi_{L,N}, \quad \hat R^+_{z,\ell,Q,N} = \hat\Pi_{+,N} \hat R_{z,\ell,Q,N} \hat\Pi_{+,N},
\end{equation*}
in the $ \{ \phi_{j,N} \}$ basis of $\hat H_{L,N}$. Its computation completes step~3 of \cref{alg:numerical}. By \cref{prop:pos-freq-approx} and~\eqref{eq:res-conv}, $\bm R_{z,N}^+$ converges to the $L\times L$ matrix representation $\bm R^+_z = [R^+_{ij}]_{i,j=1}^L$ of $\tilde R^+_{z,L} = \Pi_L R^+_z(V) \Pi_L$, with $R^+_{ij} = \langle \phi_i^+, R_z(V) \phi_j^+\rangle$ in the iterated limit $\lim_{\ell\to\infty}\lim_{Q\to\infty}\lim_{m\to\infty}\lim_{n\to\infty}\lim_{K\to\infty}\lim_{N\to\infty}$. (Recall that we have suppressed the dependence of the approximate positive-frequency projection $\hat \Pi_{+,N}$ on the parameters $m$, $n$, and $K$ appearing in the iterated limit.)

In the ensuing subsections, we will use the abbreviated notation $R_{z,L,N}^+ \equiv \hat  R_{z,L,\ell,Q,N}^+$ and interpret convergence of $R_{z,L,N}^+$ to $R^+_{z,L}$, as well as convergence of related data-driven operator approximations, in that limit. We also note that a variant of~\eqref{eq:res-mat} is
\begin{equation}
    \tilde{\bm R}_{z,N}^+ = \sum_{q=0}^Q w_q e^{-zq\,\Delta t} (\bm U_N^+)^q, \quad \bm U^+_N \equiv \bm U^{+,1}_N,
    \label{eq:res-mat2}
\end{equation}
which applies iteratively the 1-step shift matrix $\bm U^+_N$ rather than using the multi-step matrices $\bm U_N^{+,q}$. This approximation also consistently recovers $\bm R_z^+$ and has the computational benefit of having to form (and store) only a single shift operator matrix (as opposed to $Q$ such matrices used in~\eqref{eq:res-mat}). For this reason, in the experiments of \cref{sec:examples} we implement \cref{alg:numerical} using $\tilde{\bm R}_z^+$. We alert the reader to the fact that the numerical stability of \eqref{eq:res-mat2} depends on the fact that $\hat U_N$ is a unitary operator. In numerical implementations using on non-circular shift operators (e.g., $\hat U_N'$ from~\eqref{eq:shift-op-noncirc}), we found that thresholding the singular values of the operator matrix to 1 is important to ensure stable behavior of the iterative approximation~\eqref{eq:res-mat2} as $Q$ increases.

\subsection{Compactification}
\label{sec:num-compactification}

Having computed the matrix representation $\bm{R}_{z, N}^{+} \in \mathbb C^{L\times L}$, steps~4--6 of \cref{alg:numerical} approximate the compact operator $S_{z,\tau}^+$ and its negative-frequency counterpart, $S_{z,\tau}^-$. Leveraging the results of \cref{sec:finite-rank,sec:op-approx}, we build these approximation using data-driven approximations of the finite-rank operators $S_{z,\tau,L}^+$ and $S_{z,\tau,L}^-$, respectively, defined in~\eqref{eq:sztl}. As one can verify using~\cref{lem:conj} in conjunction with the fact that the basis functions $\phi_j$ are real, the matrix elements of $S_{z,\tau,L}^+$ and $S_{z,\tau, L}^-$ are complex conjugates of one another, i.e., $\langle \phi_i, S_{z,\tau,L}^-\phi_j\rangle = \langle \phi_i, S_{z,\tau,L}^+ \phi_j\rangle^*$. In light of that, it suffices to consider approximations of $S_{z,\tau,L}^+$ and obtain from them approximations of $S_{z,\tau,L}^-$ by complex conjugation.

First, we approximate $\tilde S_{z,L}^+ = |R_{z,L}^+|$ from~\eqref{eq:sztl} by $\tilde S_{z,L,N}^+ := \lvert R_{z,L,N}^+ \rvert$. The latter operator is represented in the $ \{ \phi_{j,N} \}$ basis of $\hat H_{L,N}$ by the $L\times L$ matrix $\bm S_{z,N}^+ = \lvert \bm R_{z,N}^+ \rvert$, which we compute by a polar decomposition of $\bm R^+_{z,N}$; that is, $\bm R_{z,N}^+ = \bm W_{z,N} \lvert \bm R_{z,N}^+ \rvert$ where $\bm W_{z,N}$ is a unitary matrix. We then approximate $\sqrt{\tilde S_{z,L}^+}$ by $\sqrt{\tilde S_{z,L,N}^+}$ which is represented by the matrix square root  $\sqrt{\bm S_{z,N}^+}$. Since the modulus and square root functions are continuous on $\mathbb C$ and $[0,\infty)$, respectively, it follows from the convergence of $R_{z,N}^+$ to $R_z^+$ (see \cref{lem:pos-freq-approx}) that $\sqrt{\bm S_{z,N}^+}$ converges to the matrix representation $\sqrt{ \bm S_z^+ } = [\langle \phi_i, \sqrt{S_z^+}\phi_j\rangle]_{i,j=1}^L$ of $\sqrt{\tilde S_{z,L}^+}$.

The pre/post multiplication of $\sqrt{\tilde S_{z,L}^+}$ by $G_\tau$ to effect compactification (see~\eqref{eq:szt}) is approximated by pre/post multiplication of $\sqrt{\tilde S_{z,L,N}^+}$ by $G_{\tau, N} $, giving
\begin{equation*}
    S_{z,\tau,L,N}^{+} := \sqrt{\tilde S_{z,L,N}^+} G_{\tau,N} \sqrt{\tilde S_{z,L,N}^+}
\end{equation*}
as an operator on $\hat H_{L,N}$. The $L\times L$ matrix representation of this operator in the $ \{ \phi_{j,N} \}$ basis of $\hat H_{L,N}$ is given by
\begin{displaymath}
    \bm S_{z,\tau,N}^+ = \sqrt{\bm S_{z,N}^+} \bm \Lambda_{\tau,N} \sqrt{\bm S_{z,N}^+},
\end{displaymath}
where $\bm \Lambda_{\tau,N} := \diag(\lambda_{1,N}^{\tau}, \ldots, \lambda_{L,N}^{\tau})$ and $\lambda^\tau_{j,N}$ are eigenvalues of $G_{\tau,N}$ (see \ref{app:markov_semigroup}). By \cref{lem:basis} and the convergence of $\sqrt{\bm S_{z,N}^+}$ to $\sqrt{\bm S_z^+}$, it follows that $\bm S_{z,\tau,N}^+$ converges to the matrix representation $\bm S_{z,\tau}^+ = [\langle \phi_i, S_{z,\tau}^+ \phi_j\rangle]_{i,j=1}^L$ of $S_{z,\tau,L}^+$.
% is such that $G_{\tau, N} \phi_{j,N} = \lambda^\tau_{j,N} \phi_{j,N}$

Defining $\bm S_{z,\tau,N}^- = \overline{\bm S_{z,\tau,N}^+}$, where $\overline{(\cdot)}$ denotes elementwise complex conjugation, we have that $\bm S_{z,\tau,N}^-$ is the matrix representation of $S_{z,\tau,L,N}^{-} := \sqrt{\tilde S_{z,L,N}^-} G_{\tau,N} \sqrt{\tilde S_{z,L,N}^-}$, where $\tilde S_{z,L,N}^- := \lvert R_{z,L,N}^- \rvert$. The matrices $\bm S_{z,\tau,N}^-$ converge to the matrix representation $\bm S_{z,\tau}^- = [\langle \phi_i, S_{z,\tau}^- \phi_j\rangle]_{i,j=1}^L$ of $S_{z,\tau,L}^-$.

\subsection{Eigendecomposition}
\label{sec:num-eig}

The rest of \cref{alg:numerical} (steps 7--11) involves (i) approximating the reduced-rank operators $S_{z,\tau,L}^{+,(M)}$ and $S_{z,\tau,L}^{-,(M)}$ from the eigendecompositions of $S_{z,\tau,L,N}^+$ and $S_{z,\tau,L,N}^+$, respectively; (ii) forming the corresponding approximation of $S_{z,\tau,L}^{(M)}$ in~\eqref{eq:sztlm}; and (iii) computing another eigendecomposition for that operator to yield our approximations of the resolvent $R_{z,\tau,L}^{(M)}$, the finite-rank generator $V_{z,\tau,L}^{(M)}$, and their associated eigenfrequencies and eigenfunctions.

We begin by computing an eigendecomposition of $\bm{S}_{z, \tau, N}^+$,
\begin{displaymath}
    \bm{S}_{z,\tau, N}^+ \bm c_{l,\tau,N}  = e_{l,\tau,L,N} \bm c_{l,\tau,N},
\end{displaymath}
where the eigenvalues $e_{l,\tau,L,N}$ are ordered in decreasing order, $z^{-1} > e_{1,\tau,L,N} \geq e_{2,\tau,L,N} \geq \cdots \geq e_{L,\tau,L,N} \geq 0$, and the corresponding eigenvectors $\bm c_{l,\tau,N} \in \mathbb C^L$ are chosen to be orthonormal with respect to the standard Euclidean inner product, $\bm c_{k,\tau,N} \cdot \bm c_{l,\tau,N} = \delta_{kl}$. The eigenvectors $\bm c_{l,\tau,N} = (c_{1l}, \ldots, c_{Ll})^\top$ give the expansion coefficients of eigenfunctions $\xi_{l,\tau,L,N}$ of $S^+_{z,\tau,L,N}$ in the $\{ \phi_{j,N} \}$ basis of $\hat H_{L,N}$; that is,
\begin{displaymath}
    S_{z,\tau,L,N}^+ \xi_{l,\tau,L,N} = e_{l,\tau,L,N} \xi_{l,\tau,L,N},
\end{displaymath}
where $\xi_{l,\tau,L,N} = \sum_{j=1}^L c_{jl} \phi_{j,N}$ and $\langle \xi_{k,\tau,L,N}, \xi_{l,\tau,L,N}\rangle_N = \delta_{kl}$. By convergence of $\bm S_{z,\tau,N}^+$ to $\bm S_{z,\tau}^+$, the eigenvalues $e_{l,\tau,L,N}$ of $S^+_{z,\tau,L,N}$ converge to the eigenvalues $e_{l,\tau,L}$ of $S^+_{z,\tau,L}$ in the sense of \cref{thm:spec_compact}(i), and for every eigenfunction $\xi_{l,\tau,L} \in H_L$ of $S^+_{z,\tau,L}$ there exists a sequence of eigenfunctions $\xi_{l,\tau,L,N} \in \hat H_{L,N}$ converging to it in the sense of uniform convergence of their corresponding representatives in $C(M)$.

Next, let $\bm C_{\tau,N}^{(M)} = (\bm c_{1,\tau,N}, \ldots, \bm c_{L,\tau,N})$ be the $L\times M$ matrix whose columns are the leading $M$ eigenvectors $\bm c_{l,\tau,N}$, and $\bm E_{\tau,N}^{(M)} = \diag(e_{1,\tau,L,N}, \ldots, e_{M,\tau,L,N})$ the diagonal matrix whose diagonal entries are the corresponding eigenvalues. Note that $\bm \Xi^{(M)}_{\tau,N} = \bm C_{\tau,N}^{(M)} \bm C_{\tau,N}^{(M)*}$ is the matrix representation of the orthogonal projection $\Xi_{\tau,L,N}^{(M)} : \hat H_{L,N} \to \hat H_{L,N}$ onto the $M$-dimensional subspace of $\hat H_{L,N}$ spanned by $\xi_{1,\tau,L,N}, \ldots, \xi_{M,\tau,L,N}$. Define the matrix $\bm S_{z,\tau,N}^{+,(M)} = \bm C_{\tau,L,N}^{(M)} \bm E_{\tau,N}^{(M)} \bm C_{\tau,N}^{(M)*}$ that represents the reduced-rank operator $S_{z,\tau,L,N}^{+,(M)} := \Xi_{\tau,L,N}^{(M)} S_{z,\tau,L,N}^+ \Xi_{\tau,L,N}^{(M)}$. The elementwise complex conjugate $\overline{\bm S_{z,\tau,N}^{+,(M)}} =: \bm S_{z,\tau,N}^{-,(M)}$ is the matrix representation of the negative-frequency reduced-rank operator $S_{z,\tau,L,N}^{-,(M)} := \bar\Xi_{\tau,L,N}^{(M)} S_{z,\tau,L,N}^- \bar \Xi_{\tau,L,N}^{(M)}$. Correspondingly, $\bm S_{z,\tau,N}^{(M)} = \bm S_{z,\tau,N}^{+,(M)} - \bm S_{z,\tau,N}^{-,(M)}$ represents the operator $S_{z,\tau,L,N}^{(M)} = S_{z,\tau,L,N}^{+,(M)} - S_{z,\tau,L,N}^{-,(M)}$.

By the previous results, the matrices $\bm S_{z,\tau,N}^{+,(M)}$, $\bm S_{z,\tau,N}^{-,(M)}$, and $\bm S_{z,\tau,N}^{(M)}$ converge to the matrix representations of the operators $S_{z,\tau,L}^{+,(M)}$, $S_{z,\tau,L}^{-,(M)}$, and $S_{z,\tau,L}^{(M)}$, respectively, from~\eqref{eq:sztlm_pm} and~\eqref{eq:sztlm}. The operator $S_{z,\tau,L}^{(M)}$ has rank at most $2M$. By the convergence of its matrix representation to that of $S_{z,\tau,L}^{(M)}$ and \cref{prop:spec_s}, for sufficiently large $N$ and $L$, $\rank S_{z,\tau,L,N}^{(M)}$ will be equal to $2M$, and $\sigma(S_{z,\tau,L,N}^{(M)})$ will lie in a (closed) proper subinterval of $[-z^{-1}, z^{-1}]$. Henceforth, we will assume that this is the case.

We now compute an eigendecomposition of $\bm S_{z,\tau,N}^{(M)}$,
\begin{displaymath}
    \bm{S}_{z,\tau, N}^{(M)} \bm q_{j,\tau,N}^{(M)}  = e_{j,\tau,L,N}^{(M)} \bm q_{j,\tau,N}^{(M)},
\end{displaymath}
indexing, as in \cref{sec:finite-rank_generator}, the nonzero eigenvalues by $j\in \{ -M, \ldots, -1, 1, \ldots, M \}$ and ordering them as
\begin{displaymath}
    e_{-M,\tau,L,N}^{(M)} \leq \cdots \leq e_{-1,\tau,L,N}^{(M)} < 0 < e_{1,\tau,L,N}^{(M)} \leq \cdots \leq e_{M,\tau,L,N}^{(M)}.
\end{displaymath}
We choose the corresponding eigenvectors $\bm q_{j,\tau,N}^{(M)} = (q_{1j}, \ldots, q_{Lj})^\top$ to be orthonormal on $\mathbb C^L$, $\bm q_{j,\tau,N}^{(M)} \cdot \bm q_{k,\tau,N}^{(M)} = \delta_{jk}$, and define associated eigenfunctions $\psi_{j,L,N}^{(M)}$ as
\begin{equation}
    \psi_{j,L,N}^{(M)} = \sum_{i=1}^L q_{ij} \phi_{i,N}.
    \label{eq:psi_data_driven}
\end{equation}
We also let $\bm Q_{\tau,N}^{(M)} = (\bm q_{-M,\tau,N}^{(M)}, \ldots, \bm q_{-1,\tau,N}^{(M)}, \bm q_{1,\tau,N}^{(M)}, \ldots, \bm q_{M,\tau,N}^{(M)})$ be the $L \times (2M)$ matrix containing the eigenvectors $\bm q_{j,\tau,N}^{(M)}$ in its columns. The matrix $\bm \Psi_{\tau,N}^{(M)} = \bm Q_{\tau,N}^{(M)} \bm Q_{\tau,N}^{(M)*}$ is then a projection matrix that represents the orthogonal projection $\Psi_{L,N}^{(M)} : \hat H_{L,N} \to \hat H_{L,N}$ onto $\spn \{ \psi_{j,L,N}^{(M)} \}_{j\in \{ -M, \ldots, -1, 1, \ldots, M \}}$ with respect to the $ \{\phi_{j,N}\}$ basis of $\hat H_{L,N}$. We denote the complementary projection operator to $\Psi_{L,N}^{(M)}$ as $\Psi_{L,N}^{(M),\perp} \equiv I - \Psi_{L,N}^{(M)}$. This operator is represented by the projection matrix $\bm \Psi_{\tau,N}^{(M), \perp} := \bm I - \bm \Psi_{\tau,N}^{(M)}$ where $\bm I$ is the $L\times L$ identity matrix. By the convergence of the matrix representation of $S_{z,\tau,L,N}^{(M)}$ to the matrix representation of $S_{z,\tau,L}^{(M)}$, the eigenvalues $e_{j,\tau,L,N}^{(M)}$ of $S_{z,\tau,L,N}^{(M)}$ converge to the eigenvalues $e_{j,\tau,L}^{(M)}$ of $S_{z,\tau,L}^{(M)}$ in the sense of \cref{thm:spec_compact}(i). Moreover, by \cref{lem:basis}, for every corresponding eigenfunctions $\psi_{j,\tau,L}^{(M)}$ of $S_{j,\tau,L}^{(M)}$, there is a family of eigenfunctions $\psi_{j,\tau,L,N}^{(M)}$ that converges to it in the sense of uniform convergence in $C(M)$ of their continuous representatives.

Next, similarly to~\eqref{eq:lambda_approx} and~\eqref{eq:omega_approx}, respectively, we define resolvent eigenvalues $\vartheta_{j,\tau,L,N}^{(M)} \in C_z$ and eigenfrequencies $\omega_{j,\tau,L,N}^{(M)} \in i \mathbb R$ as
\begin{equation}
    \vartheta_{j,\tau,L,N}^{(M)} = \gamma_z^{-1}(e_{j,\tau,L,N}^{(M)}), \quad
    \omega_{j,\tau,L,N}^{(M)} = \frac{1}{i} \beta_z(\vartheta_{j,\tau,L,N}^{(M)}).
    \label{eq:eig_gamma_omega}
\end{equation}
We assemble these eigenvalues and eigenfrequencies into $(2M)\times (2M)$ diagonal matrices
\begin{align*}
    \bm \Theta_{\tau,N}^{(M)} &= \diag(\vartheta_{-M,\tau,L,N}^{(M)}, \ldots, \vartheta_{-1,\tau,L,N}^{(M)}, \vartheta_{1,\tau,L,N}^{(M)}, \ldots, \vartheta_{M,\tau,L,N}^{(M)}),\\
    \bm \Omega_{\tau,N}^{(M)} &= \diag(\omega_{-M,\tau,L,N}^{(M)}, \ldots, \omega_{-1,\tau,L,N}^{(M)}, \omega_{1,\tau,L,N}^{(M)}, \ldots, \omega_{M,\tau,L,N}^{(M)}),
\end{align*}
and define
\begin{equation}
    \bm R_{z,\tau,N}^{(M)} = \bm Q_{\tau,N}^{(M)} \bm \Theta_{\tau,N}^{(M)} \bm Q_{\tau,N}^{(M)*} + z^{-1} \bm \Psi_{\tau,N}^{(M),\perp}, \quad \bm V_{z,\tau,N}^{(M)} = i \bm Q_{\tau,N}^{(M)} \bm\Omega_{\tau,N}^{(M)} \bm Q_{\tau,N}^{(M)*}.
    \label{eq:res-final-approx}
\end{equation}
The matrix $\bm R_{z,\tau,N}^{(M)}$ represents (with respect to the $ \{ \psi_{j,N} \}$ basis of $\hat H_{L,N}$) the resolvent $R_{z,\tau,L,N}^{(M)} : \hat H_{L,N} \to \hat H_{L,N}$ of a skew-adjoint operator $V_{z,\tau,L,N}^{(M)}$ on $\hat H_{L,N}$ of rank $2M$ that is represented by the (skew-adjoint) matrix $\bm V_{z,\tau,N}^{(M)}$.

By construction, the operators $R_{z,\tau,L,N}^{(M)}$ and $V_{z,\tau,L,N}^{(M)}$ have the eigendecompositions
\begin{displaymath}
    R_{z,\tau,L,N}^{(M)} \psi_{j,\tau,L,N}^{(M)} = \vartheta_{j,\tau,L,N}^{(M)} \psi_{j,\tau,L,N}^{(M)}, \quad V_{z,\tau,L,N}^{(M)} \psi_{j,\tau,L,N}^{(M)} = i \omega_{j,\tau,L,N}^{(M)} \psi_{j,\tau,L,N}^{(M)},
\end{displaymath}
respectively. Moreover, by continuity of the functions $\gamma_z^{-1}$ and $\beta_z$, the eigenvalues $\vartheta_{j,\tau,L,N}^{(M)}$ and eigenfrequencies $\omega_{j,\tau,L,N}^{(M)}$ from~\eqref{eq:eig_gamma_omega} converge to $\vartheta_{j,\tau,L}^{(M)}$ and $\omega_{j,\tau,L}^{(M)}$ from~\eqref{eq:lambda_approx} and~\eqref{eq:omega_approx}, respectively.

To summarize, the asymptotic convergence of our scheme is as follows:
\begin{enumerate}
    \item At fixed regularization parameter $\tau$, rank parameter $M$, and approximation space dimension parameter $L$, the $L\times L$ matrix representations of $R_{z,\tau,L,N}^{(M)}$ and $V_{z,\tau,L,N}^{(M)}$ on $\tilde H_{L,N}$ converge to the $L \times L$ matrix representations of $R_{z,\tau,L}^{(M)} = \tilde \Pi_L R_{z,\tau}^{(M)} \tilde \Pi_L$ and $V_{z,\tau,L}^{(M)} = \tilde \Pi_L V_{z,\tau}^{(M)} \tilde \Pi_L$ on $\tilde H_L$, respectively, in the sense of \cref{prop:pos-freq-approx} and~\eqref{eq:res-conv} (i.e., in the iterated limit $\lim_{m\to\infty}\lim_{n\to\infty}\lim_{K\to\infty}\lim_{\ell\to\infty}\lim_{Q\to\infty}\lim_{N\to\infty}$). As a result, the eigenvalues $\gamma_{l,\tau,L,N}^{(M)}$ and $\omega_{l,\tau,L,N}^{(M)}$ of $R_{z,\tau,L,N}^{(M)}$ and $V_{z,\tau,L,N}^{(M)}$ converge to eigenvalues $\gamma_{l,\tau,L}^{(M)}$ and $\omega_{l,\tau,L}^{(M)}$ of $R_{z,\tau,L}^{(M)}$ and $V_{z,\tau,L}^{(M)}$, respectively. The corresponding eigenfunctions $\psi_{l,\tau,L,N}^{(M)}$ converge to eigenfunctions $\psi_{l,\tau,L}^{(M)}$ of $R_{z,\tau,L}^{(M)}$ and $V_{z,\tau,L}^{(M)}$ in the sense of convergence of the corresponding continuous representatives in $C(M)$ (cf.\ \cref{lem:basis}).
    \item At fixed regularization parameter $\tau$ and rank parameter $M$, $R_{z,\tau,L}^{(M)}$ converges as $L\to\infty$ strongly to the projected resolvent $R_{z,\tau}^{(M)}$. As a result, the corresponding skew-adjoint operators $V_{z,\tau,L}^{(M)}$ converge in strong resolvent sense, and thus spectrally in the sense of \cref{thm:spec-conv}, to $V_{z,\tau}^{(M)}$.
    \item At fixed regularization parameter $\tau$, $R_{z,\tau}^{(M)}$ converges as $M\to\infty$ strongly to the compactified resolvent $R_{z,\tau}$. As a result, $V_{z,\tau}^{(M)}$ converges in strong resolvent sense, and thus spectrally in the sense of \cref{thm:spec-conv}, to $V_{z,\tau}$.
        % Moreover, $V_{z,\tau,L}$ converges to $V_{z,\tau}$ strongly on the dense subspace of $\tilde H$ spanned by the basis functions $ \{ \phi_j \}_{j=1}^\infty$ of $\tilde H$. \dg{Need to check this.} As a result, the eigenvalues $\gamma_{l,\tau,L}$ and eigenfrequencies $\omega_{l,\tau,L}$ converge to the eigenvalues $\gamma_{l,\tau}$ and eigenfrequencies $\omega_{l,\tau}$ of $R_{z,\tau}$ and $V_{z,\tau}$, respectively, and the projections to the corresponding eigenspaces converge in strong sense.
    \item As $\tau\to 0^+$ the compactified resolvents $R_{z,\tau}$ converge to $R_z(V)$ strongly. As a result, $V_{z,\tau}$ converges to the Koopman generator $V$ in strong resolvent sense, and thus spectrally in the sense of \cref{thm:spec-conv}.
\end{enumerate}

\subsection{Practical computing considerations}

There are several computational choices one makes when implementing \cref{alg:numerical}, namely the choice of: basis functions $\phi_{j,N}$, the number $L$ of such functions used, the rank parameter $M$, the choice of resolvent parameter $z$, the choice of regularization parameter $\tau$, and the numerical integration time $T_\ell = Q \,\Delta t$ used for resolvent computation.

\paragraph{Choice of basis functions.} The asymptotic convergence of \cref{alg:numerical} requires fairly mild assumptions on the kernel $k$ (i.e., continuity, Markov normalizability, and strict positivity of the associated integral operator), which can be met using many of the popular classes of kernels in the literature \cite{Genton01}. In particular, our approach of using orthonormal basis functions derived by eigendecomposition of integral operators addresses an important problem in data-driven Koopman and transfer operator techniques, namely the choice of a well-conditioned dictionary adapted to the function space on which the Koopman/transfer operator acts \cite{WilliamsEtAl15,KlusEtAl16}.

That being said, at any given $L,M$ and number of samples $N$, the performance of \cref{alg:numerical} invariably depends on the choice of $k$. In broad terms, our approach is subject to usual bias--variance tradeoffs encountered in data-driven techniques---increasing $L$ and/or $M$ reduces the bias associated with the approximation of $R_{z,\tau}$ by the finite-rank operator $R_{z,\tau,L}^{(M)}$, but at the same time, increasing $L$ and/or $M$ at fixed $N$ increases the risk of sampling errors (``variance'') in the approximation of the matrix representation of $R_{z,\tau,L}^{(M)}$ by the matrix representation of $R_{z,\tau,L,N}^{(M)}$. Thus, for efficient and statistically robust spectral computations, it is advantageous that the leading basis vectors $\phi_j$ span subspaces of $H$ which are approximately invariant under the action of the Koopman operator.

In specific cases, the kernel $k$ can be explicitly chosen such that the $\phi_j$ span unions of Koopman eigenspaces (e.g., a translation-invariant kernel for systems on tori with pure point spectra; see \cref{sec:lineartor} and \ref{app:markov_semigroup}). In such cases, accurate approximations of Koopman eigenfunctions can be obtained with small numbers of basis functions. In general, however, the more complex are the dynamics and/or observation modality $F$, the more basis functions are required. A possible empirical approach for increasing the efficiency of the basis functions in representing approximately Koopman-invariant subspaces is to lift the observation map to take values in a higher-dimensional space using delay embeddings. It can be shown that this approach leads to kernel integral operators $G_\tau$ that asymptotically commute with the Koopman operator, and as a result the eigenspaces of $G_\tau$ with sufficiently isolated corresponding eigenvalues become approximately Koopman-invariant \cite{DasGiannakis19,Giannakis21a}. This approach may be particularly useful in high-dimensional applications (e.g., climate dynamics \cite{FroylandEtAl21}).

\paragraph{Balance between resolvent parameter $z$ and total integration time $T_\ell$.} The choice of $z$ and $T_\ell$ will affect the accuracy of the numerical quadrature~\eqref{eq:RlQN}. As $z$ increases, the rapidly decreasing term $e^{-zt}$ will require finely spaced quadrature nodes for accuracy. Eventually, for $t$ greater than a $z$-dependent value $t_\epsilon$, $e^{-zt}$ will decay to zero to machine precision; increasing $T_\ell$ beyond $t_\epsilon$ will not improve convergence of the numerical quadrature, and may in fact be detrimental due to injection of numerical noise to small matrix elements of $R_{z,\ell,Q,N}$ from~\eqref{eq:RlQN}. At the same time, for a given $z$, $T_\ell$ must be sufficiently large so that the truncated resolvent $R_{z,\ell}$ from \eqref{eq:Rzl} is a good approximation to $R_z(V)$ when projected onto the finite-dimensional approximation space $H_L$. We recommend that $z$ is chosen so that $z^{-1}$ is on the order of expected frequencies of the system.

\paragraph{Choice of regularization parameter $\tau$.} Choosing $\tau$ involves a similar tradeoff to the choice of number of basis functions $L$. That is, the smaller $\tau$ is the closer is the approximate generator $V_{z,\tau}$ to the true generator $V$ is in a spectral sense, but, in general, the smaller $\tau$ is accurate approximation of $V_{z,\tau}$ requires larger numbers of basis functions and/or training samples. As one might expect, the sensitivity of the eigenvalues and eigenfunctions of $V_{z,\tau}$ on $\tau$ is higher in systems with non-diagonalizable Koopman operators (e.g., the L63 system studied in \cref{sec:L63}), where the eigenspaces of $V_{z,\tau}$ along convergent sequences of eigenvalues may not have well-defined limits as $\tau$ decreases to 0. On the other hand, in systems with non-trivial Koopman eigenfunctions, like the torus rotation described in \cref{sec:lineartor}, the results of spectral calculations appear to be more insensitive to the choice of $\tau$.

As a practical guideline, $\tau$ can be tuned by minimization of the pseudospectral bound $\varepsilon_{T_c}$ from \cref{eq:eig-epsilon} for the leading eigenfunctions of $V_{z,\tau}$ over candidate $\tau$ values. Fortunately, varying $\tau$ takes relatively few computation resources as the use of $G_{\tau/2,N}$ (\cref{alg:numerical}, step 5) happens after the expensive basis function computation (the first step of \cref{alg:numerical}).

\paragraph{Choice of rank parameter $M$.} In the numerical experiments detailed in \cref{sec:examples}, we found that we could obtain sufficient results when the rank parameter $M$ was chosen to be no larger than $L / 3$, however smaller values of $M$ can improve results, particularly when $L$ is large. For the second example (skew-product flow on the torus; \cref{sec:skewtor}), we found that $M = 266 \approx L/3$ yields less than optimal results, and instead use $M \approx L / 20$. (Using values of $M = L / 6$ and $M = L / 10$ gave very similar results in the sense of eigenfrequency accuracy and the magnitude $\epsilon_{T_c}$.) Ultimately, we recommend making an initial choice of $M = L / 6$, with the possibility of tuning this parameter through minimizing $\epsilon_{T_c}$ and maintaining $M$ large enough such that the eigendecomposition can capture relevant dynamics of the system.

\section{Examples}
\label{sec:examples}

In this section, we apply the technique described in \cref{sec:numimplement} to three ergodic systems of increasing complexity: a linear rotation on the torus, a skew rotation on the torus, and the Lorenz 63 (L63) system \cite{Lorenz63} on $\mathbb R^3$. The linear and skew torus rotations have pure point spectra which we can compute analytically. The L63 system is mixing with an associated continuous spectrum of the Koopman operator.

In each example, we numerically generate state space trajectory data $x_0, \ldots, x_{N-1} \in X$ at a fixed interval $\Delta t$. We embed the state space trajectory in data space $Y = \mathbb R^d$ via an embedding $F: X \to Y$ to produce time-ordered samples $y_0, \ldots, y_{N-1}$ with $y_n = F(x_n)$. Using the samples $y_n$, we compute kernel eigenvectors $\phi_{j,N}$ and associated eigenvalues via the approach described in \cref{sec:basis} and \ref{app:markov}. We then execute \cref{alg:numerical} to compute eigenfunctions $\psi_{j,\tau,L,N}^{(M)}$ of the approximate generator $V_{z,\tau,L,N}^{(M)}$ and their corresponding eigenfrequencies $\omega_{j,\tau,L,N}^{(M)}$. For the remainder of this section, if there is no risk of confusion  we suppress $\tau$, $L$, $M$, and $N$ subscripts/superscripts from our notation for eigenfunctions and eigenfrequencies, i.e., $\phi_{j,N} \equiv \phi_j$, $\psi_{j,\tau,L,N}^{(M)} \equiv \psi_j$, and $\omega_{j,\tau,L,N}^{(M)} \equiv \omega_j$.

Our main experimental objectives are to verify that our approach (i) yields  consistent results with analytical solutions for systems with known spectra (linear and skew torus rotations); and (ii) identifies observables that behave as approximate Koopman eigenfunctions, i.e., have strong cyclicity and slow correlation decay, under mixing dynamics. Given these objectives, we order our computed eigenpairs $(\omega_1, \psi_1), (\omega_2, \psi_2), \ldots$ using the pseudospectral criteria described in \cref{sec:pseudospec}; that is, $\psi_1$ is the non-constant eigenfunction with the smallest pseudospectral bound $\varepsilon_{T_c}(\omega_j, \psi_j)$ from~\eqref{eq:eig-epsilon}, $\psi_2$ has the second-smallest $\varepsilon_{T_c}(\omega_j, \psi_j)$ value, and so on.

Representative numerical eigenfrequencies computed for the three systems under study using datasets of different lengths, along with corresponding values of $\varepsilon_{T_c}$ and error relative to analytically known eigenfrequencies (when available) are listed in \cref{tab:num_results}.

\begin{table}[]
    \centering
    \resizebox{\textwidth}{!}{%
    \begin{tabular}{llllllll}
        \toprule
    \textbf{System} &
      \textbf{Observation Map} &
      \textbf{$N$} &
      \textbf{$T_c$} &
      \textbf{Computed $\omega_j$} &
      \textbf{Known $\omega_j$} &
      \textbf{Relative Error} &
      \textbf{$\varepsilon_{T_c}(\omega_j, \psi_j)$} \\ \hline
     &
       &
      \cellcolor[HTML]{EFEFEF} &
      \cellcolor[HTML]{EFEFEF} &
      \cellcolor[HTML]{EFEFEF}1.0001 &
      \cellcolor[HTML]{EFEFEF}1.0000 &
      \cellcolor[HTML]{EFEFEF}0.01\% &
      \cellcolor[HTML]{EFEFEF}0.01175 \\
     &
       &
      \cellcolor[HTML]{EFEFEF} &
      \cellcolor[HTML]{EFEFEF} &
      \cellcolor[HTML]{EFEFEF}5.4784 &
      \cellcolor[HTML]{EFEFEF}5.4772 &
      \cellcolor[HTML]{EFEFEF}0.02\% &
      \cellcolor[HTML]{EFEFEF}0.01176 \\
     &
       &
      \multirow{-3}{*}{\cellcolor[HTML]{EFEFEF}40962} &
      \multirow{-3}{*}{\cellcolor[HTML]{EFEFEF}49.10} &
      \cellcolor[HTML]{EFEFEF}6.4787 &
      \cellcolor[HTML]{EFEFEF}6.4772 &
      \cellcolor[HTML]{EFEFEF}0.02\% &
      \cellcolor[HTML]{EFEFEF}0.01133 \\
     &
       &
       &
       &
      1.0001 &
      1.0000 &
      0.01\% &
      0.12167 \\
     &
       &
       &
       &
      5.478 &
      5.4772 &
      0.01\% &
      0.12179 \\
     &
      \multirow{-6}{*}{$\bbR^4$} &
      \multirow{-3}{*}{4098} &
      \multirow{-3}{*}{49.10} &
      6.4774 &
      6.4772 &
      0.00\% &
      0.12160 \\
     &
       &
      \cellcolor[HTML]{EFEFEF} &
      \cellcolor[HTML]{EFEFEF} &
      \cellcolor[HTML]{EFEFEF}1.0002 &
      \cellcolor[HTML]{EFEFEF}1.0000 &
      \cellcolor[HTML]{EFEFEF}0.02\% &
      \cellcolor[HTML]{EFEFEF}0.01203 \\
     &
       &
      \cellcolor[HTML]{EFEFEF} &
      \cellcolor[HTML]{EFEFEF} &
      \cellcolor[HTML]{EFEFEF}5.4811 &
      \cellcolor[HTML]{EFEFEF}5.4772 &
      \cellcolor[HTML]{EFEFEF}0.07\% &
      \cellcolor[HTML]{EFEFEF}0.01176 \\
     &
       &
      \multirow{-3}{*}{\cellcolor[HTML]{EFEFEF}40962} &
      \multirow{-3}{*}{\cellcolor[HTML]{EFEFEF}49.10} &
      \cellcolor[HTML]{EFEFEF}6.4819 &
      \cellcolor[HTML]{EFEFEF}6.4772 &
      \cellcolor[HTML]{EFEFEF}0.07\% &
      \cellcolor[HTML]{EFEFEF}0.01163 \\
     &
       &
       &
       &
      1.0002 &
      1.0000 &
      0.02\% &
      0.12344 \\
     &
       &
       &
       &
      5.4823 &
      5.4772 &
      0.09\% &
      0.16230 \\
    \multirow{-12}{*}{Linear rotation} &
      \multirow{-6}{*}{$\bbR^3$} &
      \multirow{-3}{*}{4098} &
      \multirow{-3}{*}{49.10} &
      6.481 &
      6.4772 &
      0.06\% &
      0.13050 \\ \hline
     &
       &
      \cellcolor[HTML]{EFEFEF} &
      \cellcolor[HTML]{EFEFEF} &
      \cellcolor[HTML]{EFEFEF}1.0001 &
      \cellcolor[HTML]{EFEFEF}1.0000 &
      \cellcolor[HTML]{EFEFEF}0.01\% &
      \cellcolor[HTML]{EFEFEF}0.00974 \\
     &
       &
      \cellcolor[HTML]{EFEFEF} &
      \cellcolor[HTML]{EFEFEF} &
      \cellcolor[HTML]{EFEFEF}5.4882 &
      \cellcolor[HTML]{EFEFEF}5.4772 &
      \cellcolor[HTML]{EFEFEF}0.20\% &
      \cellcolor[HTML]{EFEFEF}0.01244 \\
     &
       &
      \multirow{-3}{*}{\cellcolor[HTML]{EFEFEF}40962} &
      \multirow{-3}{*}{\cellcolor[HTML]{EFEFEF}19.64} &
      \cellcolor[HTML]{EFEFEF}6.5149 &
      \cellcolor[HTML]{EFEFEF}6.4772 &
      \cellcolor[HTML]{EFEFEF}0.58\% &
      \cellcolor[HTML]{EFEFEF}0.02215 \\
     &
       &
       &
       &
      1.0002 &
      1.0000 &
      0.02\% &
      0.98219 \\
     &
       &
       &
       &
      5.4837 &
      5.4772 &
      0.12\% &
      0.10718 \\
     &
      \multirow{-6}{*}{$\bbR^4$} &
      \multirow{-3}{*}{4098} &
      \multirow{-3}{*}{19.64} &
      6.5075 &
      6.4772 &
      0.47\% &
      0.11348 \\
     &
       &
      \cellcolor[HTML]{EFEFEF} &
      \cellcolor[HTML]{EFEFEF} &
      \cellcolor[HTML]{EFEFEF}1.0019 &
      \cellcolor[HTML]{EFEFEF}1.0000 &
      \cellcolor[HTML]{EFEFEF}0.19\% &
      \cellcolor[HTML]{EFEFEF}0.00951 \\
     &
       &
      \cellcolor[HTML]{EFEFEF} &
      \cellcolor[HTML]{EFEFEF} &
      \cellcolor[HTML]{EFEFEF}5.4899 &
      \cellcolor[HTML]{EFEFEF}5.4772 &
      \cellcolor[HTML]{EFEFEF}0.23\% &
      \cellcolor[HTML]{EFEFEF}0.10952 \\
     &
       &
      \multirow{-3}{*}{\cellcolor[HTML]{EFEFEF}40962} &
      \multirow{-3}{*}{\cellcolor[HTML]{EFEFEF}19.64} &
      \cellcolor[HTML]{EFEFEF}6.6956 &
      \cellcolor[HTML]{EFEFEF}6.4772 &
      \cellcolor[HTML]{EFEFEF}3.37\% &
      \cellcolor[HTML]{EFEFEF}0.30054 \\
     &
       &
       &
       &
      1.0025 &
      1.0000 &
      0.25\% &
      0.09899 \\
     &
       &
       &
       &
      5.4677 &
      5.4772 &
      0.17\% &
      0.20315 \\
    \multirow{-12}{*}{Skew rotation} &
      \multirow{-6}{*}{$\bbR^3$} &
      \multirow{-3}{*}{4098} &
      \multirow{-3}{*}{19.64} &
      6.6596 &
      6.4772 &
      2.82\% &
      0.32353 \\ \hline
     &
       &
      \cellcolor[HTML]{EFEFEF} &
      \cellcolor[HTML]{EFEFEF} &
      \cellcolor[HTML]{EFEFEF}7.4639 &
      \cellcolor[HTML]{EFEFEF}NA &
      \cellcolor[HTML]{EFEFEF}NA &
      \cellcolor[HTML]{EFEFEF}0.15949 \\
     &
       &
      \cellcolor[HTML]{EFEFEF} &
      \cellcolor[HTML]{EFEFEF} &
      \cellcolor[HTML]{EFEFEF}14.2152 &
      \cellcolor[HTML]{EFEFEF}NA &
      \cellcolor[HTML]{EFEFEF}NA &
      \cellcolor[HTML]{EFEFEF}0.35803 \\
    \multirow{-3}{*}{l63} &
      \multirow{-3}{*}{Id} &
      \multirow{-3}{*}{\cellcolor[HTML]{EFEFEF}64000} &
      \multirow{-3}{*}{\cellcolor[HTML]{EFEFEF}2.00} &
      \cellcolor[HTML]{EFEFEF}4.7587 &
      \cellcolor[HTML]{EFEFEF}NA &
      \cellcolor[HTML]{EFEFEF}NA &
      \cellcolor[HTML]{EFEFEF}0.78442 \\
      \bottomrule
    \end{tabular}
    }
    \caption{Representative eigenfrequencies $\omega_j$ for the linear rotation, skew rotation, and L63 system. We print numerically computed eigenfrequencies, analytically known eigenfrequencies (for the torus rotation and skew rotation), and the corresponding relative error. The table also shows the value of the pseudospectral bound $\varepsilon_{T_c}(\omega_j \psi_j)$ from \eqref{eq:eig-epsilon} for the corresponding eigenfunctions $\psi_j$ and the time parameter $T_c$ for which $\varepsilon_{T_c}(\psi_j)$ was computed. The quantity $N$ denotes the number of training data points for each experiment.}
    \label{tab:num_results}
    \end{table}

\subsection{Linear rotation on the torus}
\label{sec:lineartor}

A linear rotation on the torus is the simplest system studied and is defined by the flow $\Phi^t : \mathbb T^2 \to \mathbb T^2$,
\[\Phi^t(\theta_1, \theta_2) = (\theta_1 + \alpha_1 t, \theta_2 + \alpha_2 t) \mod 2\pi, \]
where $\alpha_1, \alpha_2 \in \mathbb R$ are frequency parameters, and $(\theta_1, \theta_2) \in [0, 2\pi)^2$ are standard angle coordinates.
For any $\alpha_1, \alpha_2 \in \mathbb R$, the flow $\Phi^t$ preserves the normalized Haar measure $\mu$ on $\mathbb T^2$, and when $\alpha_1$ and $\alpha_2$ are incommensurate $\mu$ is ergodic (in that case, $\mu$ is the unique Borel invariant probability measure under $\Phi^t$). The eigenfrequency ($\omega_j$) and eigenfunction ($\psi_j$) pairs for the Koopman generator $V$ on $L^2(\mu)$ can be written as
\begin{equation}
    \omega_j = \alpha_1 m_j +  \alpha_2 n_j, \quad \psi_j(\theta_1, \theta_2) = e^{i (m_j\theta_1 + n_j\theta_2)}
    \label{eq:omega-phi_rot}
\end{equation}
for some integers $m_j, n_j$.
Thus, when $\Phi^t$ is ergodic, the set of eigenfrequencies has the structure of an additive abelian group generated by $\alpha_1$ and $\alpha_2$, and forms a dense subset of the real line. The eigenfunctions $\psi_j$ have the form of Fourier functions, i.e., they correspond to characters of $\mathbb T^2$ viewed as an abelian group.

In the following experiments, we set $\alpha_1 = 1$ and $\alpha_2 = \sqrt{30}$. We additionally split this case into two experiments: one where the torus $\mathbb{T}^2$ is embedded into $\bbR^4$ and one where the embedding is  into $\bbR^3$. The observation map $F_4: \mathbb{T}^2 \to \bbR^4$ is the standard ``flat'' embedding of the torus,
\begin{equation}
    F_4(\theta_1, \theta_2) = (\cos\theta_1, \sin\theta_1, \cos\theta_2, \sin\theta_2).
    \label{eq:F4}
\end{equation}
The observation map $F_3: \mathbb{T}^2 \to \bbR^3$ is given by
\begin{equation}
    F_3(\theta_1, \theta_2) = ((1 + R \cos\theta_1)\cos\theta_2, (1 + R \cos\theta_1)\sin\theta_2, R \sin\theta_1 ),
    \label{eq:F3}
\end{equation}
where $R$ is a radius parameter set to $0.5$. While the Koopman eigenfunctions of the system do not change with the observation map, one could anticipate that computing these from data sampled in $\mathbb R^4$ will be easier since in this case the integral operator $G_\tau$ commutes with the Koopman operator $U^t$ (by translation invariance of the kernel; see \ref{app:markov_semigroup}), which means that Koopman eigenfunctions can be represented as finite linear combinations of basis functions $\phi_j$.

\paragraph{Generated data}
Data was generated with a time step of $\Delta t = 0.0491 \approx 2\pi / 28$ for 2011 time units ($N = \text{40,962}$ samples), and we compute the approximate Koopman eigenfunctions with the following parameters for both observation maps: $z = 1$ (resolvent parameter), $\tau = 10^{-4}$ (regularization parameter), $L = 101$ (number of basis functions $\phi_j$), $M = 33$ (rank parameter), and $T_\ell = 50$ (resolvent integration time). We compute the pseudospectral bound $\varepsilon_{T_c}(\omega_j, \psi_j)$ with the time parameter $T_c =49.1$. To assess the robustness of our results, we also compute eigenfunctions using a time series of $N= \text{4,098}$ samples rather than 40,962.

\paragraph{Numerical results}

\Cref{fig:torusex_4d,fig:torusex_3d} show representative numerical eigenfunctions $\psi_j$ computed from the dataset with $N=\text{40,962}$ samples using the $F_4$ and $F_3$ observation maps, respectively. These eigenfunctions were selected on the basis of their low corresponding $\varepsilon_{T_c}(\omega_j, \psi_j)$ values and the proximity of the corresponding eigenfrequencies $\omega_j$ with the theoretical values $\omega_j$ from~\eqref{eq:omega-phi_rot} equal to $\alpha_1 = 1$ ($m_j=1$, $n_j=0$), $\alpha_2 = \sqrt{30} \approx 5.48$ ($m_j=0$, $n_j=1$), and $\alpha_1 + \alpha_2 \approx 6.48$ ($m_j=n_j=1$). In the case of the $F_4$ experiments, the plotted eigenfunctions are $\psi_{56}$, $\psi_{58}$, and $\psi_{54}$; in the case of the $F_3$ experiments, they are $\psi_{16}$, $\psi_8$, and $\psi_4$. The numerically computed eigenfrequencies match the true eigenfrequencies between two and five significant digits (see \cref{tab:num_results}). Note that, in general, there is no correspondence between our ordering of the eigenfunctions based on $\varepsilon_{T_c}(\psi_j)$ and an ordering based on the corresponding eigenfrequencies. In the case of the $F_4$ observation map, the ordering places the generating eigenfrequencies near the end of list of eigenfrequencies at the 56th and 58th places of the $2M = 66$ nontrivial eigenfrequencies. In this case, the $\epsilon_{T_c}$ ordering becomes somewhat arbitrary as the eigenfunctions are well resolved, with $0.0082 \leq \epsilon_{T_c} \leq 0.0118$ for the first 60 eigenfrequencies. Eigenfrequency errors are generally more significant in the $F_3$ experiments---this is expected from the fact that in this case the embedding yields a manifold with curvature, and the corresponding kernel integral operator $G_\tau$ does not commute with the Koopman operator.

The bottom rows of \cref{fig:torusex_4d,fig:torusex_3d} show scatterplots of the real parts of $\psi_j$ on the sampled states $x_n$ on the torus. These scatterplots have the structure of plane waves and are in good agreement with the theoretical eigenfunctions in \eqref{eq:omega-phi_rot}; see \cref{fig:knowneigf}(top) for comparison. The middle row shows traceplots of the time series $\psi_j(x_0), \psi_j(x_1), \ldots$ in the complex plane. Theoretically, the time series should trace the unit circle in the complex plane. The numerical results in \cref{fig:torusex_4d,fig:torusex_3d} demonstrate that this is the case to a very good approximation. Some discrepancy from the unit circle is visible in the $\psi_{16}$ results in \cref{fig:torusex_3d}, which is likely due to the fact that the basis functions $\phi_j$ derived from the $F_3$ embedding cannot represent general Koopman eigenfunctions through finite basis expansions. The top rows of \cref{fig:torusex_4d,fig:torusex_3d} show time series of the real parts of $\psi_j(x_n)$. These time series have the structure of sinusoidal waves with frequencies close to the corresponding eigenfrequencies, as expected theoretically.

In separate calculations, we have computed eigenfunctions and eigenfrequencies using the $N=4098$-sample datasets (with either $F_4$ or $F_3$ embeddings). The reduction of the number of samples was found to impart a modest loss of accuracy compared to the $N=\text{40,962}$ datasets (evidenced by the higher values of $\varepsilon_{T_c}$ in \cref{tab:num_results}), but overall the computed eigenfunctions are in good qualitative agreement with those depicted in \cref{fig:torusex_4d,fig:torusex_3d}.

\begin{figure}
  \centering
  \includegraphics[width=\textwidth]{"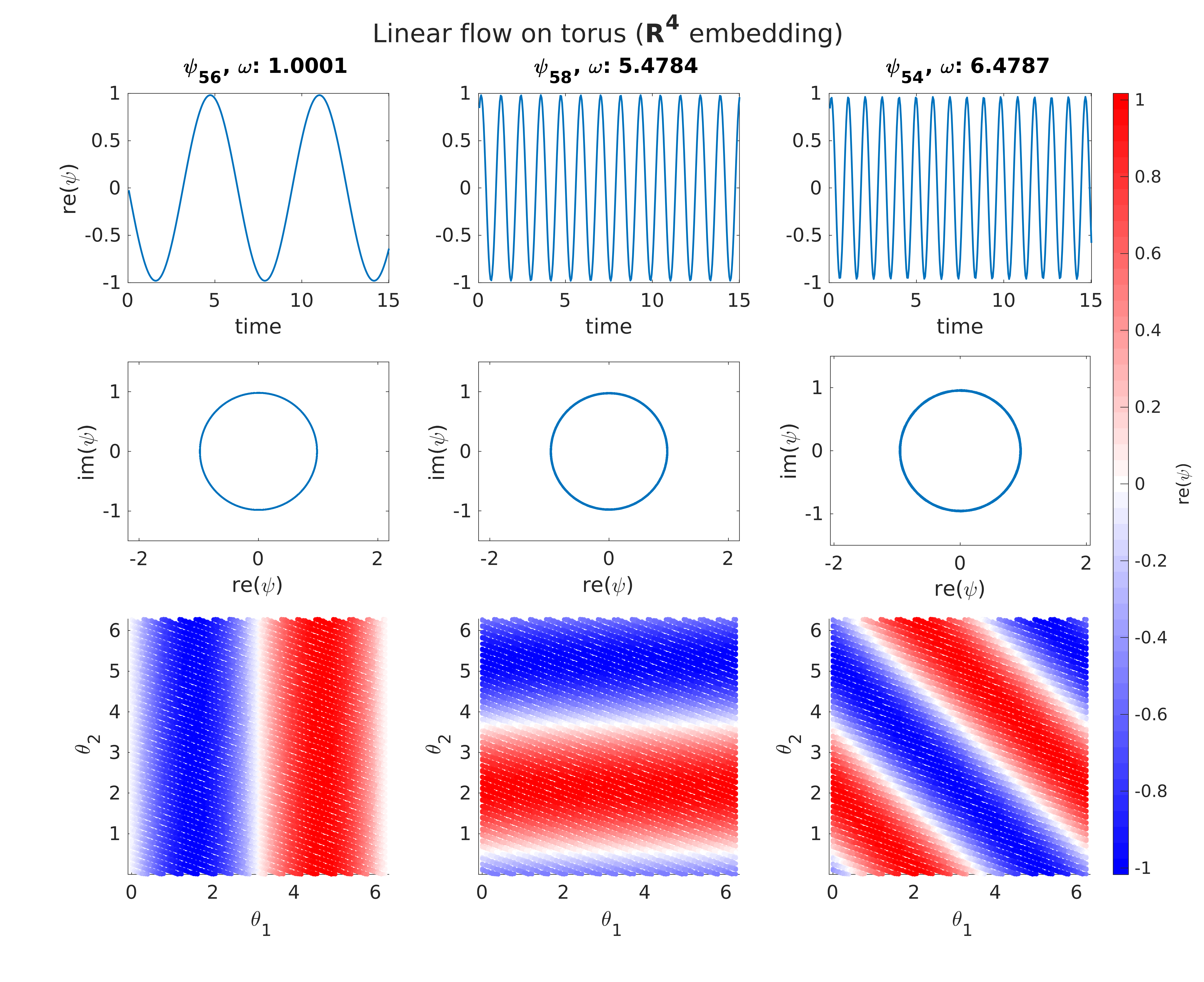"}
  \caption{Eigenfunctions of the approximate Koopman generator for the linear torus rotation computed using trajectory data in $\bbR^4$ from the embedding $F_4$ in~\eqref{eq:F4}. Each column shows a different eigenfunction $\psi_j$, with eigenfrequency $\omega_j$ given in the title. The index $j$ represents the eigenfunction order which corresponds to a sorting of smallest $\epsilon_{T_c}(\psi_j)$ as computed in \eqref{eq:eig-epsilon}. Top row: Real part of $\psi_j$ versus time along a portion of the dynamical trajectory. Middle row: Evolution of $\psi_j$ along a portion of the dynamical trajectory plotted in the complex plane. Bottom row: Scatterplot of the real part of $\psi_j$ on the 2-torus parameterized by the angle coordinates $\theta_1, \theta_2 \in [0,2\pi)$. Notice that eigenfunction $\psi_{54}$, shown in the right column, has an eigenfrequency that is approximately an integer linear combination of the eigenfrequencies of $\psi_{56}$ and $\psi_{58}$, shown in the left and middle columns, respectively.}
  \label{fig:torusex_4d}
\end{figure}

\begin{figure}
  \centering
  \includegraphics[width=\textwidth]{"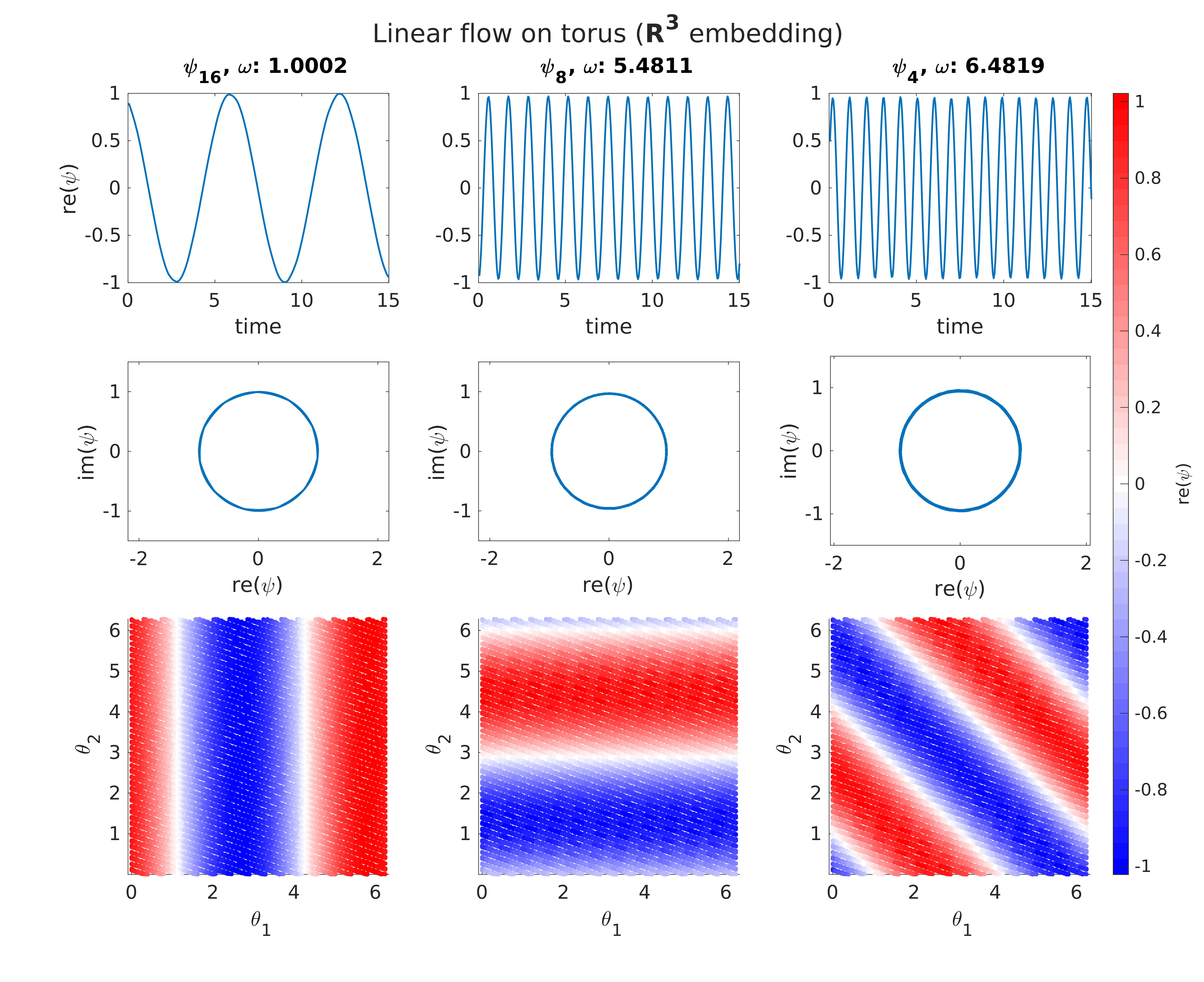"}
  \caption{Eigenfunctions of the approximate generator for the linear torus rotation, with trajectory data given in $\bbR^3$. Figure organized as described in \cref{fig:torusex_4d}. As one would expect, the eigenfrequencies and eigenfunctions associated with this system are nearly identical to those found in the $\bbR^4$ embedding of the same system.}
  \label{fig:torusex_3d}
\end{figure}

\subsection{Skew rotation on the torus}
\label{sec:skewtor}

We consider the skew-product flow $\Phi^t : \mathbb T^2 \to \mathbb T^2$ defined by the system of differential equations
\begin{equation}
    \frac{d\theta_1}{dt} = \alpha_1, \quad \frac{d\theta_2}{dt} = \alpha_2 ( 1 + \beta \sin\theta_1).
    \label{eq:skewrot}
\end{equation}
As with the linear rotation example in \cref{sec:lineartor}, $\alpha_1$ and $\alpha_2$ are real frequency parameters, and the system is measure-preserving and ergodic for the Haar probability measure when $\alpha_1$ and $\alpha_2$ are incommensurate. The parameter $\beta \in [0, 1)$ controls the strength of the influence of the $\theta_1$ evolution to the $\theta_2$ dynamics.

One can verify by solving \eqref{eq:skewrot} that the Koopman generator of this system admits the eigenfrequencies $\omega_j$ and corresponding eigenfunctions $\psi_j$ given by
\[ \omega_j = \alpha_1 m_j + \alpha_2 n_j, \quad \psi_j(\theta_1, \theta_2) = e^{i m_j \theta_1} e^{i n_j (\theta_2 + \beta\frac{\alpha_2}{\alpha_1} \cos(\alpha_1 \theta_1))}\]
with $m_j, n_j \in \mathbb Z$. Thus, the eigenfrequencies of this system are the same as those of the linear rotation in \cref{sec:lineartor}, whereas the eigenfunctions have the structure of distorted Fourier functions with the amount of distortion depending on $\beta$. See \cref{fig:knowneigf} in \ref{app:eigf} for plots of selected eigenfunctions. One can also verify that $ \{ \psi_j \}$ forms an orthonormal basis of $L^2(\mu)$.

In fact, the system~\eqref{eq:skewrot} is topologically conjugate to the linear torus rotation. We can define this conjugacy explicitly by setting $z_1$ and $z_2$ to the eigenfunctions $\psi_j$ corresponding to the generating eigenfrequencies $\alpha_1$ and $\alpha_2$, respectively, and defining $ h: \mathbb T^2 \to \mathbb C^2$ as the map $h(\theta_1, \theta_2) = (z_1(\theta_1), z_2(\theta_2))$. It can then be shown that $h$ is a continuous injective map with image $\mathbb T^2 \subset \mathbb C^2$ that satisfies the conjugacy relationship $h \circ \Phi^t = \Phi^t_\text{rot} \circ h$. Here, $\Phi_\text{rot}$ stands for the linear rotation from \cref{sec:lineartor} with the same generating frequencies as the skew rotation $\Phi^t$ generated by~\eqref{eq:skewrot}. More generally, the existence of $h$ follows from the Halmos--von Neumann theorem \cite{HalmosVonNeumann42}, which states that any two spectrally isomorphic systems with pure point spectra are measure-theoretically isomorphic. In this case, the isomorphism is realized by the continuous function $h$ so it becomes a topological conjugacy.

\paragraph{Generated data} We set $\beta = 0.5$ and use the same frequency parameters $\alpha_1 = 1$ and $\alpha_2 = \sqrt{30}$ as in \cref{sec:lineartor}. Similarly to \cref{sec:lineartor}, the data used was generated with a time step of $\Delta t = 0.0491 \approx 2 \pi / 28$ for $N = \text{40,962}$ or 4098 samples. The eigenfunctions of the approximate Koopman generator are computed with parameters $z = 1$, $L = 800$, $\tau = 10^{-4}$, $M = 40$, and $T_\ell = 50$. We order the eigendecomposition using $\varepsilon_{T_c}$ as in the previous example, with $T_c = 19.64$.

\paragraph{Numerical results}
We have computed eigenfunctions from data embedded in either $\mathbb R^4$ or $\mathbb R^3$ using the observation maps $F_4$ and $F_3$ from~\eqref{eq:F4} and~\eqref{eq:F3}, respectively.  The experiments with different numbers of samples and observation maps yielded broadly similar results, so in what follows we focus on results from the $F_4$ observation map with $N=\text{40,962}$ samples. See \cref{tab:num_results} for summary results from all four experiments.

In \cref{fig:skewex_4d}, we plot three representative eigenfunctions $\psi_j$ in a similar manner as the linear rotation eigenfunctions shown in \cref{fig:torusex_4d,fig:torusex_3d}. The three plotted eigenfunctions, $\psi_{22}$, $\psi_2$, and $\psi_{16}$, have eigenfrequencies approximately equal to $\alpha_1$, $\alpha_2$, and $\alpha_1 + \alpha_2$, respectively, similarly to \cref{fig:torusex_4d,fig:torusex_3d}.
These eigenfunctions match analytical expectations: the computed eigenfunctions plotted on the trajectory of the flow are similar to the true eigenfunctions shown in \cref{fig:knowneigf}(bottom) and oscillate at approximately the correct frequency.

\begin{figure}[htbp]
  \centering
  \includegraphics[width=\textwidth]{"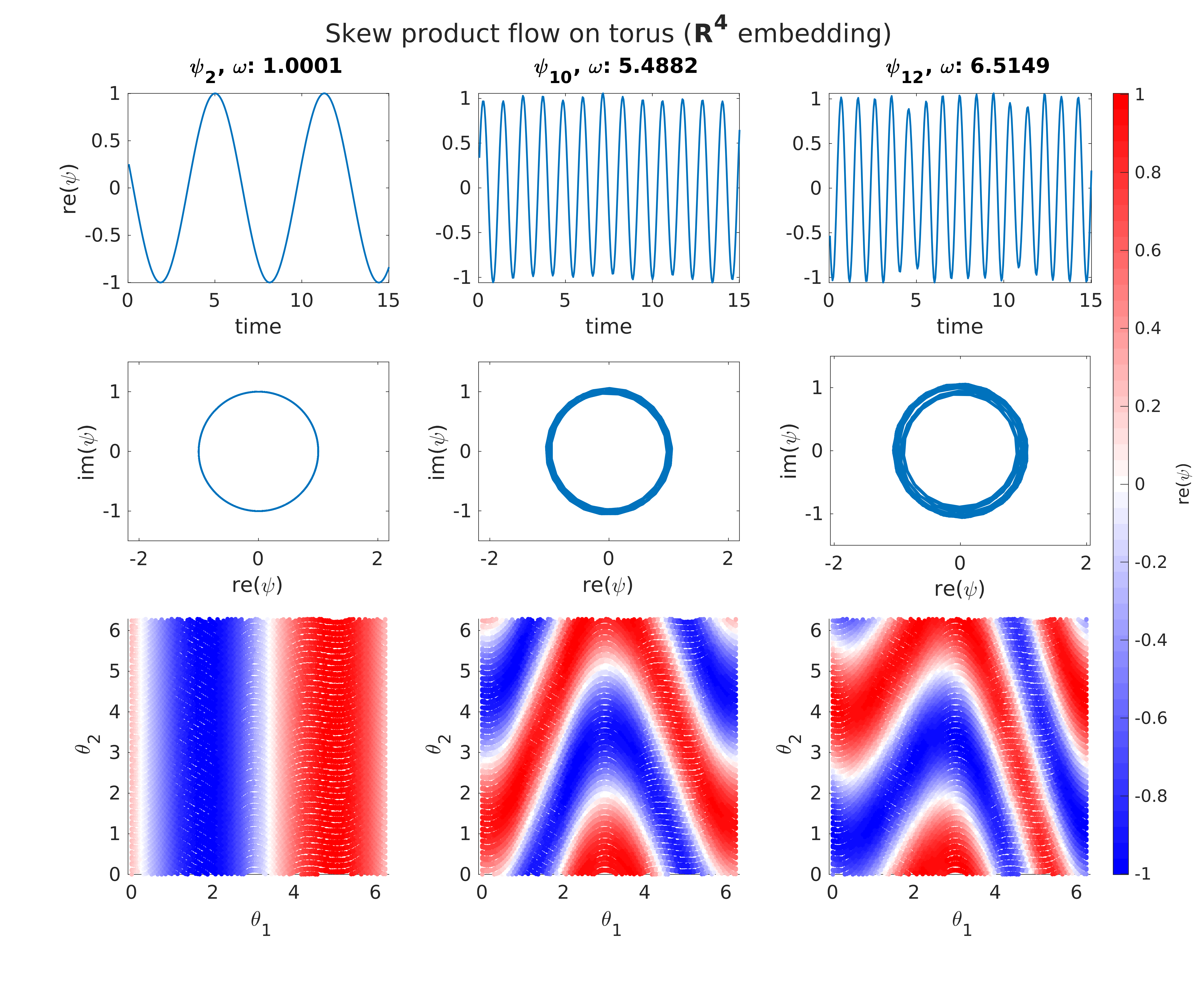"}
  \caption{Eigenfunctions of the approximate Koopman generator for the skew torus rotation, with trajectory data given in $\bbR^4$. Organized as in \cref{fig:torusex_4d}.}
  \label{fig:skewex_4d}
\end{figure}

\subsection{Lorenz 63 system}
\label{sec:L63}
The L63 system is generated by the smooth vector field $ \vec V : \mathbb R^3 \to \mathbb R^3$ defined as
\[ \vec V(x,y,z) =  (-\sigma(y - x), x(\rho - z) - y, xy - \beta z).\]
We use the standard parameter values $\beta = 8/3$, $\rho = 28$, $\sigma = 10$, and the identity for the observation map $F: \mathbb R^3 \to \mathbb R^3$. For this choice of parameters, the L63 system is known to have a compact attractor $X \subset \bbR^3$ with fractal dimension $\approx 2.06$ that supports a unique SRB measure $\mu$ (which is physical) \cite{Tucker99}. The system is also known to be mixing with respect to $\mu$ \cite{LuzzattoEtAl05}, which implies that the associated unitary Koopman group has no nonzero eigenfrequencies. The $H_p$ subspace is then a one-dimensional space of  constant functions, while $H_c$ contains all zero-mean functions in $L^2(\mu)$ (i.e., $H_c = \tilde H$; see \cref{sec:spectral-ergodic}). The L63 system with the standard parameter values is dissipative, $\divr \vec V < 0$, and can be shown to possess compact absorbing balls containing $X$ \cite{LawEtAl14}. Setting the forward invariant set $M$ to such an absorbing ball, all assumptions for data-driven approximation stated in \cref{sec:training} are rigorously satisfied. Yet, unlike the examples in \cref{sec:lineartor,sec:skewtor}, it is not obvious how the discrete spectra of the approximate Koopman generator $V_{z,\tau, L}^{(M)}$ should behave.

\paragraph{Generated data}
Data was generated with a time step of $\Delta t = 0.01$ for $4096$ time units.  We compute eigenfunctions of the approximate Koopman generator using the parameters $z = 1$, $\tau = 2 \times  10^{-6}$, $L = 2000$, $M = 333$, and $T_\ell = 50$. We use the pseudospectral bound $\varepsilon_{T_c}(\omega_k, \psi_j)$ computed with $T_c = 2$ to order the eigenpairs $(\omega_j,\psi_j)$.

\paragraph{Numerical results}
Representative eigenfrequencies $\omega_j$ and their corresponding eigenfunctions $\psi_j$ are displayed in \cref{tab:num_results,fig:l63ex}. Despite the mixing nature of the dynamics, the eigenfunctions appear to oscillate regularly and behave as Koopman eigenfunctions for sufficiently short times (see \cref{tab:num_results}). Additionally, many of the eigenfunctions remain predictable well beyond the Lyapunov time. The positive Lyapunov exponent of the system, approximately 0.90566, corresponds to a Lyapunov time of approximately $1.104$ time units \cite{viswanath1998lyapunov}. In contrast, many of the computed eigenfunctions remain correlated, thus predictable, for up to 5 time units. Some of these eigenfunctions -- namely $\psi_2$ and $\psi_4$ plotted in \cref{fig:l63ex} -- remain correlated for up to 20 time units. Notable also is that results similar to $\psi_2$ (where the similarity is most evident in the scatterplot of $\re\psi_2$ on the attractor) have been found in the L63 system by other methods, including \cite{DasEtAl21, FroylandEtAl21, Giannakis21a, KordaEtAl20}. The period associated with $\psi_2$, $2\pi/\omega_1 \approx 0.84$ time units, appears to capture the timescale of approximate periodic orbits around the fixed points centered at the ``holes'' in the L63 attractor lobes. In contrast, eigenfunction $\psi_{30}$, which is also shown in \cref{fig:l63ex}, appears to be of a qualitatively nature and is associated with a longer period of $2 \pi / \omega_{30} \approx 1.32$ time units. A possible interpretation of this eigenfunction is that it represents mixing between the two attractor lobes.

\begin{figure}[htbp]
  \centering
  \includegraphics[width=\textwidth]{"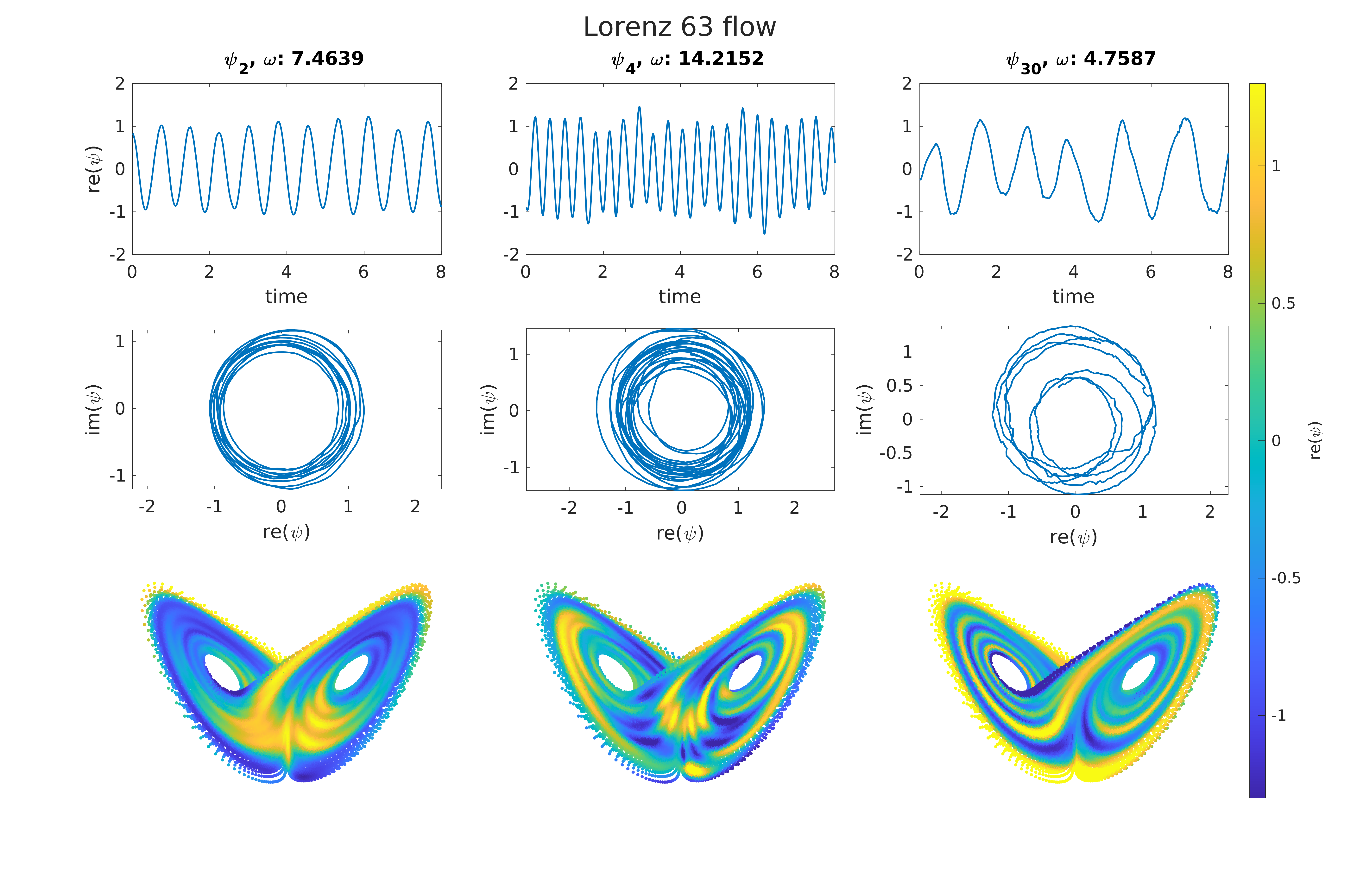"}
  \caption{Eigenfunctions of the approximate generator for the Lorenz 63 system, organized as described in \cref{fig:torusex_4d}. Eigenfunction $\psi_2$, plotted in the center column, appears to be an approximate harmonic of the eigenfunction in the left column, as $2\omega_2 \approx \omega_4$ and the eigenfunctions behave as such on the attractor. The rightmost eigenfunction, $\psi_{30}$, appears to be less smooth, but still has qualitatively good behavior (oscillates regularly at about the frequency of $\omega_{30}$).}
  \label{fig:l63ex}
\end{figure}

\subsection{Approximation of correlations}

To further test the convergence of our approximations, we examine if we can reproduce the time-autocorrelation functions $C_f(t)$ from~\eqref{eq:autocorr}. Defining, for $f \in \tilde H$ and for $t \in \mathbb R$, the Borel probability measure $\nu_f: \mathcal B(i \mathbb R) \to [0, 1]$ and bounded Borel-measurable function $h_t : i \mathbb R \to \mathbb C$ as $\nu_f(\Theta) = \langle f, E(\Theta) f\rangle$ and $h_t(i\omega) = e^{i\omega t}$ , we have $C_f(t) = \int_{i \mathbb R} h_t \, d\nu_f$ (here, $E: \mathcal B(i \mathbb R) \to B(H)$ is the spectral measure of the generator $V$; see \cref{sec:dynamics}). Similarly, the time-autocorrelation function $C_{f,\tau} : \mathbb R \to \mathbb C$ under $V_{z,\tau}$ can be expressed as
\begin{equation}
    \label{eq:autocorr_tau}
    C_{f,\tau}(t) = \langle f, U^t_\tau f\rangle = \int_{i \mathbb R} h_t \, d\nu_{f,\tau},
\end{equation}
where $\nu_{f,\tau} : \mathcal B(i \mathbb R) \to [0, 1]$ is the Borel probability measure $\nu_{f,\tau}(\Theta) = \langle f, E_\tau(\Theta)\rangle$ induced by the (discrete) spectral measure $E_\tau: \mathcal B(i \mathbb R) \to B(H)$ of $V_{z,\tau}$,
\begin{equation}
    E_\tau(\Theta) = \sum_{l: i\omega_{l,\tau} \in \Theta} \langle \psi_{l,\tau}, \cdot \rangle \psi_{l,\tau}.
    \label{eq:pvm_tau}
\end{equation}
Thus, testing for approximation of $C_f(t)$ by $C_{f,\tau}(t)$ amounts to testing for approximation of the spectral measure of the generator for specific observables ($f$) and test functions $(h_t)$. Combining \eqref{eq:autocorr_tau} and \eqref{eq:pvm_tau} leads to the formula
\begin{equation}
    C_{f,\tau}(t) = \sum_{l\in \mathbb Z} e^{i\omega_{l,\tau}t} \lvert c_{l,\tau}\rvert^2, \quad c_{l,\tau} = \langle \psi_{l,\tau}, f\rangle,
    \label{eq:autocorr_tau2}
\end{equation}
giving the autocorrelation function in terms of the expansion coefficients $c_{l,\tau}$ of $f$ in the $\psi_{l, \tau}$ basis of $H$ and the corresponding eigenvalues $\omega_{l,\tau}$.

In the numerical setting of this section, we consider the empirical normalized anomaly correlation function computed for times $t_j = j\, \Delta t$, $j \in \mathbb N_0$, from samples of an observable $f: \mathcal M \to \mathbb R$ on the training data,
\begin{displaymath}
    \hat C_f(t_j) = \frac{1}{N \lVert \hat f'\rVert_{\hat H_N}^2} \sum_{n=0}^{N-1} \hat f'(x_n) \hat f'(x_{n+j}), \quad \hat f' = f - \frac{1}{N}\sum_{n=0}^{N-1} f(x_n).
\end{displaymath}
If $f$ represents a unit vector in $\tilde H$ then $\hat C_f(t_j)$ converges to $C_f(t_j)$ as $N\to \infty $ by ergodicity. We further approximate $C_{f,\tau}(t_j)$ by the autocorrelation function $\hat C_{f,\tau}(t_j)$ associated with the spectral measure of $V_{z,\tau,L,N}^{(M)}$, computed using an analogous formula to~\eqref{eq:autocorr_tau2} applied to the unit vector $\hat f' / \lVert \hat f'\rVert_{\hat H_N} \in \hat H_N$ using the eigenfrequencies $\omega_{l,\tau,L,N}^{(M)}$ and the corresponding eigenfunctions $\psi_{l,\tau,L,N}^{(M)}$.

% In particular, because $\omega_{l,\tau,L,N}^{(M)} = 0$ for $l > \lvert M\rvert$, we get
% \begin{equation}
%     \hat C_{f,\tau}(t_j) = \sum_{l=-M}^M e^{i\omega_{l,\tau}t} \lvert c_{l,\tau}\rvert^2 + \frac{\lVert f - \hat f\rVert_{\hat H_N}}{\lVert f\rVert^2_{\hat H_N}},
%     \label{eq:autocorr_tau3}
% \end{equation}
% where $c_{l,\tau,L,N}^{(M)} = \langle \psi_{l,\tau,L,N}^{(M)}, f \rangle_N$ and $\hat f = \sum_{l=-M}^M c_{l,\tau,L,N}^{(M)}\psi_{l,\tau,L,N}^{(M)}$.

% estimate $C_{f,\tau}(t_j)$ for $t_j = j \, \Delta t$, $j \in \mathbb N_0$ by
In \cref{fig:correlations}, we compare the empirical autocorrelation $\hat C_f$ with the approximated autocorrelation $\hat C_{f,\tau}$ with $f$ set to selected components $F^{(j)}: X \to \mathbb R$ of the embedding maps $F = (F^{(1)}, \ldots, F^{(d)}) : X \to \mathbb R^d$ for the linear rotation, skew-product rotation, and L63 examples from \cref{sec:lineartor,sec:skewtor,sec:L63}, respectively.

In all examples considered, $\hat C_{f,\tau}$ tracks $\hat C_f$ relatively well (at least for short times), with the case of the linear torus flow exhibiting the most accurate reconstruction (perhaps unsurprisingly due to the simplicity of the dynamics and observation map). A noticeable discrepancy between $\hat C_{f,\tau}(t_j)$ and $\hat C_f(t_j)$ is an apparent buildup of phase error as $t_j$ increases for the $F^{(3)}$ component of the embedding map for the skew-product rotation and L63 system. In the case of the $F^{(1)}$ and $F^{(2)}$ components of the L63 system, we see that $\hat C_{f,\tau}$ captures the initial decay of correlations, but on longer times exhibits oscillations which are not seen in the true system. This re-emergence of correlations is likely due to the fact that $\hat C_{f,\tau}$ is given by a superposition of at most $2M$ periodic components (see \cref{alg:numerical}), and thus cannot decay to 0 as $t_j \to \infty$. Nonetheless, for any $T>0$, $\hat C_{f,\tau}(t_j)$ can be made arbitrarily close to the true autocorrelation $C_f(t_j)$ for all $t_j \in [0, T]$ for sufficiently large $N, M, L$ and sufficiently small $\tau$. %\textcolor{red}{[DG: It would be nice if we can demonstrate this, e.g., by increasing $M$.]}

% In both the skew-product flow and L63 cases, we capture the decorrelation time (or lack there of), but have phase errors in some of the coordinates. In the l63 case, we note that for coordinates $x_1$ and $x_2$ we capture the decorrelation time for each coordinate, but have an additional oscillation. As we prioritize observables $\psi_k$  that maintain correlation in longer times, this is unsurprising as $\hat{f}$ is constructed from these observables. In the limit, $N, M \to \infty$, I would expect that there would be diminished autocorrelation in the l63 case.

\begin{figure}

    \includegraphics[width=0.9\textwidth]{"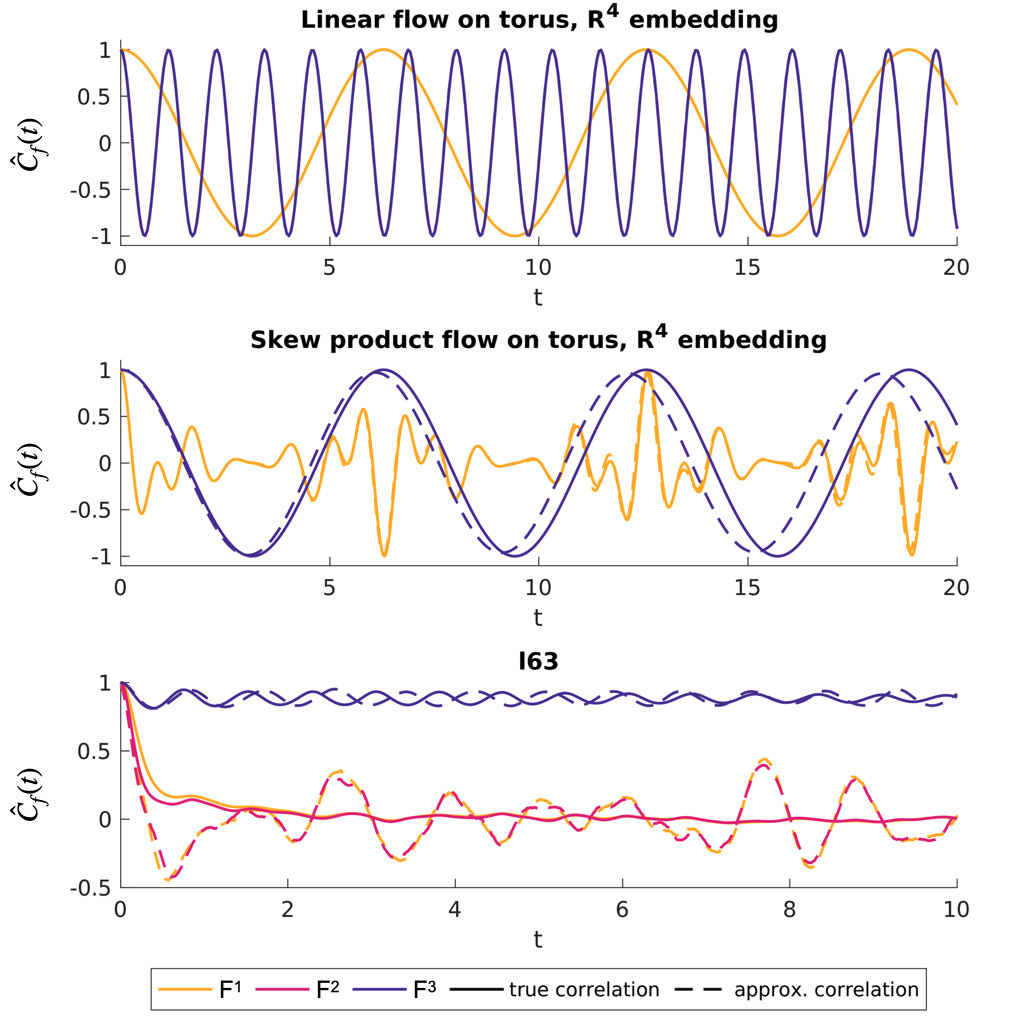"}
    \caption{Comparison of the empirical autocorrelation functions $\hat C_f$ (dashed lines) of selected coordinate functions $F^{(j)}$ to the autocorrelation functions $\hat C_{f,\tau}$ calculated from the approximated generator $V_{z,\tau}$ (solid lines). Only $F^{(1)}$ and $F^{(3)}$ are plotted in the two torus systems, as the correlation functions are the same in the $\mathbb R^4$ embedding for $j = 1$ and $j = 2$ as well as $j = 3$ and $j = 4$.}
    \label{fig:correlations}
  \end{figure}
% For the figure above, the color denotes the coordinate function, i.e. $f(t) = x_j(t)$ for coordinate and the solid vs. dashed lines denote either the true correlation or the approximated correlation.
\section{Conclusions}
\label{sec:conclusions}

In this paper, we have developed a method for spectrally-accurate approximations of the Koopman generator on $L^2$ for continuous-time, ergodic, measure-preserving dynamical systems. The primary element of this method is a regularization of the Koopman resolvent using a semigroup of Markovian kernel integral operators which produces a one-parameter family of compact operators with discrete spectra and complete orthonormal families of associated eigenfunctions. These compactified operators act as resolvents of approximations $V_{z,\tau}$ to the generator that preserve important properties (namely, skew-adjointness and unboundedness) and converge spectrally to the generator in a limit of vanishing regularization parameter, $\tau \searrow 0$. In particular, the family $V_{z,\tau}$ generates unitary evolution groups with discrete spectra that converge strongly to the Koopman group on $L^2$. Using a posteriori pseudospectral criteria, we identify elements of the spectra of $V_{z,\tau}$ whose corresponding eigenfunctions evolve with approximately cyclic behavior and slow correlation decay (i.e., behave as approximate Koopman eigenfunctions).

This method translates well to the data-driven setting. In particular, the use of the integral representation of the resolvent allows the use of integral quadrature methods with a resolvent parameter $z>0$ that can be tuned to the time-resolution of the observed data (which, in practice, is difficult to control). Moreover, our use of data-driven basis functions obtained by eigendecomposition of kernel integral operators naturally addresses the problem of constructing well-conditioned dictionaries for Koopman operator approximation with asymptotic convergence guarantees in the large data limit. Using examples with pure point spectra (torus rotation and skew-rotation) and mixing dynamics (Lorenz 63 system), we have demonstrated this method can accurately approximate Koopman eigenvalues and eigenfunctions for those systems that have them, and is also effective at identify slowly decorrelating observables in systems with mixing dynamics and non-trivial continuous Koopman spectra.

A possible topic for future work would be to characterize aspects of the dependence of the eigenvalues and eigenfunctions of $V_{z,\tau}$ on the regularization parameter $\tau$. As mentioned in \cref{sec:intro}, the fact that $V_{z,\tau}$ is a family of operators with compact resolvent (rather than compact operators as in the approach of \cite{DasEtAl21}) opens the possibility that this dependence is analytic, which could lead to new data-driven algorithms that take advantage of this analyticity. It would also be fruitful to explore applications of higher-order quadrature schemes  for resolvent approximation and/or positive-frequency projection, as well as applications of recently-proposed residual-based criteria for filtering spurious eigenvalues from the spectra of $V_{z,\tau}$ \cite{ColbrookTownsend24}. Finally, while the focus of this work has been on $L^2$ spaces, implementations of our approach with positive-definite kernels should have natural analogs in RKHSs (cf.\ \cite{DasEtAl21}). Working in an RKHS setting should be useful for supervised learning of models for predicting observables, which we plan to pursue in future work.

\appendix
\section{Kernel construction}
\label{app:markov}

In this appendix, we outline the construction of the families of Markov kernels $p_\tau: \mathcal M \times \mathcal M \to \mathbb R_+$ and $p_{\tau,N} : \mathcal M \times \mathcal M \to \mathbb R_+$, $\tau > 0 $, associated with the integral operators $G_\tau : H \to H$ and $G_{\tau,N}: \hat H_N \to \hat H_N $, respectively. Our approach follows closely \cite{Giannakis19,DasEtAl21}, who use results of \cite{VonLuxburgEtAl08,CoifmanHirn13,BerryHarlim16} to build these kernels and study their properties. Here, we summarize the main steps of the construction, referring the reader to \cite{Giannakis19,DasEtAl21} for further details.

\subsection{Choice of kernel}

Our starting point is a strictly positive, measurable kernel function $k: \mathcal M \times \mathcal M \to \mathbb R_+$ that is continuous the compact set $M$. In data-driven applications, we define $k$ as the pullback of a kernel $\kappa : Y \times Y \to \mathbb R_+$ on data space, i.e.,
\begin{equation}
    k(x,x') = \kappa(F(x),F(x'));
    \label{eq:kernel-pullback}
\end{equation}
see \cref{sec:numimplement}. As a concrete example, the Gaussian radial basis function (RBF) kernel,
\begin{equation}
    \kappa(y,y') = e^{-\lVert y - y'\rVert^2 / \epsilon^2}, \quad \epsilon > 0,
    \label{eq:rbf}
\end{equation}
is continuous and strictly positive on $Y=\mathbb R^d$, and thus leads to a pullback kernel $k$ from~\eqref{eq:kernel-pullback} that meets our requirements under the continuity and injectivity assumptions on the observation map $F$ from \cref{sec:training}.
% ; that is, $k(x,x') > 0$ for every $x,x' \in M$, and for every finite sequence of distinct points $x_1, \ldots, x_n \in M$ the $n\times n$ kernel matrix $\bm K = [k(x_i,x_j)]_{i,j=1}^n$ is strictly positive-definite

In the experiments of \cref{sec:examples} we work with a variable-bandwidth generalization of \eqref{eq:rbf} proposed in \cite{BerryHarlim16},
\begin{equation}
    \kappa(y,y') = \exp\left(-\frac{\lVert y - y'\rVert^2}{\epsilon^2\sigma(y)\sigma(y')}\right), \quad \epsilon > 0,
    \label{eq:vb}
\end{equation}
where $\sigma: Y \to \mathbb R_+$ is a continuous, strictly positive bandwidth function obtained via a kernel density estimation procedure. Intuitively, the role of $\sigma$ is to increase (decrease) the locality of the kernel in regions of high (low) sampling density with respect to the Lebesgue measure on data space. See \cite[Appendix~A]{DasEtAl21} and \cite[Algorithm~1]{Giannakis19} for a precise definition and pseudocode. We tune the kernel bandwidth parameter $\epsilon$ automatically using the method described in \cite{CoifmanEtAl08,BerryHarlim16}; see again \cite[Algorithm~1]{Giannakis19}.

We should note that in applications the bandwidth function $\sigma$ depends on the sampling measure $\mu_N$ underlying the training data; however, it converges uniformly to a data-independent function in the large data limit, $N\to\infty$. In this appendix, we suppress the dependence of $\sigma$ on $\mu_N$ from our notation for $\kappa$ from~\eqref{eq:vb} for simplicity.

\subsection{Markov normalization}
\label{app:markov-normalization}

With the kernel $k$ from~\eqref{eq:kernel-pullback}, we apply a normalization procedure based on the approach of \cite{CoifmanHirn13} to obtain a symmetric Markov kernel $p : \mathcal M \times \mathcal M \to \mathbb R_+$ with respect to the invariant measure $\mu$, as well as Markov kernels $p_N : \mathcal M \times \mathcal M \to \mathbb R_+$ with respect to the sampling measures $\mu_N$.

Given a Borel probability measure $\nu$ with support contained in $M$, we construct a symmetric Markov kernel $p^{(\nu)} : \mathcal M \times \mathcal M \to \mathbb R+$ with respect to $\nu$ through the sequence of operations
\begin{equation}
    \begin{gathered}
        d(x) = \int_M k(x,x')\, d\nu(x'), \quad q(x) = \int_M \frac{k(x,x')}{d(x')}\,d\nu(x'),\\
        p^{(\nu)}(x,x') = \int_M \frac{k(x,x'') k(x'',x')}{d(x)q(x'')d(x')}\,d\nu(x'').
    \end{gathered}
    \label{eq:bistoch}
\end{equation}
One readily verifies that $p^{(\nu)}$ is symmetric, strictly positive, and continuous on $M \times M$, and satisfies the Markov condition
\begin{equation*}
    \int_M p^{(\nu)}(x,\cdot)\,d\nu = 1, \quad \forall x \in \mathcal M.
\end{equation*}
Moreover, it can be shown that the corresponding integral operator $G^{(\nu)} : L^2(\nu) \to L^2(\nu)$ defined as
\begin{equation}
    G^{(\nu)} f = \int_M p^{(\nu)}(\cdot, x) f(x) \, d\nu(x)
    \label{eq:g-op}
\end{equation}
is a trace class, strictly positive, ergodic Markov operator on $L^2(\nu)$.

Henceforth, we will let $p \equiv p^{(\mu)}$ and $\hat p_N \equiv p^{(\mu_N)}$ be the kernels obtained via~\eqref{eq:bistoch} for the invariant measure $\mu$ and sampling measure $\mu_N$, respectively, and $G \equiv G^{(\mu)}$ and $\hat G_N \equiv G^{(\mu_N)}$ be the corresponding integral operators on $H=L^2(\mu)$ and $\hat H_N=L^2(\mu_N)$, respectively, defined via \eqref{eq:g-op}. We will also let
\begin{equation}
    G \phi_j = \lambda_j \phi_j, \quad \hat G_N \phi_{j,N} = \lambda_{j,N}\phi_{j,N}
    \label{eq:g-g_gn-ops}
\end{equation}
be eigendecompositions of $G$ and $\hat G_N$, where $ \{ \phi_j \}_{j=0}^\infty$ and $ \{ \phi_{j,N} \}_{j=0}^{N-1}$ are real orthonormal bases of $H$ and $\hat H_N$, and the corresponding eigenvalues are ordered as $1=\lambda_0 > \lambda_1 \geq \lambda_2 \geq \cdots \searrow 0^+$ and $1=\lambda_{0,N} > \lambda_{1,N} \geq \lambda_{2,N} \geq \cdots \lambda_{N-1,N} > 0$, respectively. Note that $\hat G_N$ is represented by an $N\times N$ bistochastic matrix $\bm G = [\hat p_N(x_i,x_j)]_{i,j=0}^{N-1}$ computed from the training data $y_i = F(x_i)$, and the eigenvalues and eigenvectors of $\hat G_N$ can be obtained by eigendecomposition of $\bm G$ (cf.\ \cref{sec:basis}). However, it turns out that for the class of kernels from~\eqref{eq:bistoch} the eigendecomposition of $\hat G_N$ can be computed from the singular value decomposition of an $N\times N$ non-symmetric kernel matrix $\hat{\bm K}$ that factorizes $\bm G$ as $\bm G = \hat{\bm K} \hat{\bm K}^\top$; see \cite[Appendix~B]{DasEtAl21}. We use the latter approach in our numerical experiments as it avoids explicit formation of the kernel matrix $\bm G$.

\subsection{Markov semigroups}
\label{app:markov_semigroup}

We use the integral operators $G$ and $\hat G_N$ from \ref{app:markov-normalization} to define semigroups of Markov operators $G_\tau: H \to H$ and $G_{\tau,N} : \hat H_N \to \hat H_N$ parameterized by the regularization parameter $\tau\geq 0$. To that end, we first define the non-negative numbers
\begin{equation*}
    \eta_j = \left(\frac{1}{\lambda_j} -1\right)\frac{1}{\lambda_1}, \quad \eta_{j,N} = \left(\frac{1}{\lambda_{j,N}} -1\right)\frac{1}{\lambda_{1,N}},
\end{equation*}
and the self-adjoint operators $\Delta : D(\Delta) \to H$ and $\Delta_N : \hat H_N \to \hat H_N$ with
\begin{equation*}
    \Delta = \frac{G^{-1} - I}{\lambda_1}, \quad \Delta_N = \frac{\hat G^{-1}_N - I}{\lambda_{1,N}},
\end{equation*}
where the domain $D(\Delta) \subset H$ of $\Delta$ is given by $D(\Delta) = \{ f \in H: \sum_{j=0}^\infty \eta_j \lvert \langle \phi_j, f \rangle \rvert^2 < \infty \}$.

The operators $\Delta$ and $\Delta_N$ are positive operators with discrete spectra $ \{ \eta_j \}_{j=0}^\infty$ and $ \{ \eta_{j,N} \}_{j=0}^{N-1}$ that generate semigroups $ \{ G_\tau \}_{\tau\geq 0}$ and $ \{ G_{\tau,N} \}_{\tau \geq 0}$ of compact self-adjoint operators $G_\tau = e^{-\tau\Delta}$ and $G_{\tau,N} = e^{-\tau\Delta_N}$, respectively. One readily verifies the eigendecompositions
\begin{equation*}
    G_\tau \phi_j = \lambda_j^\tau \phi_j, \quad G_{\tau,N} \phi_{j,N} = \lambda_{j,N}^{\tau} \phi_{j,N},
\end{equation*}
where $\lambda_j^\tau = e^{-\tau \eta_j}$ and $\lambda_{j,N}^\tau = e^{-\tau \eta_{j,N}}$ are strictly positive eigenvalues for every $\tau \geq 0$. Moreover, it can be shown \cite[Theorem~1]{DasEtAl21} that for any $\tau>0$, $G_\tau$ and $G_{\tau,N}$ are trace class integral operators,
\begin{equation*}
    G_\tau f = \int_M p_\tau(\cdot, x) f(x) \,d\mu(x), \quad G_{\tau,N} f = \int_M p_{\tau,N}(\cdot, x)f(x)\,d\mu_N(x).
\end{equation*}
Here, $p_\tau : \mathcal M \times \mathcal M \to \mathbb R_+$ and $p_{\tau,N}: \mathcal M \times \mathcal M \to \mathbb R_+$ are Markov kernels that are continuous on $M\times M$, and are given by uniformly convergent Mercer expansions
\begin{equation}
    \label{eq:p_tau}
    p_\tau(x,x') = \sum_{j=0}^\infty \lambda_j^\tau \varphi_j(x) \varphi_j(x'), \quad p_{\tau,N}(x,x') = \sum_{j=0}^{N-1} \lambda_{j,N}^\tau \varphi_{j,N}(x) \varphi_{j,N}(x'), \quad \forall x,x' \in M.
\end{equation}
In \eqref{eq:p_tau}, $\varphi_j, \varphi_{j,N} \in C(M)$ are the continuous representatives of $\phi_j$ and $\phi_{j,N}$, respectively (cf.~\eqref{eq:varphi}),
\begin{equation*}
    \varphi_j = \frac{1}{\lambda_j^\tau} \int_M p(\cdot, x) \phi_j(x)\,d\mu(x), \quad \varphi_{j,N} = \frac{1}{\lambda_{j,N}^\tau} \int_M \hat p_N(\cdot, x) \phi_{j,N}(x)\,d\mu_N(x).
\end{equation*}
In summary, the kernels $p_\tau$ and $p_{\tau,N}$ satisfy all requirements laid out in \cref{sec:kernel,sec:basis}.

Before closing this appendix, we note that the Gaussian RBF kernel~\eqref{eq:rbf} on $\mathbb R^d$ is translation-invariant (i.e., $\kappa(y+s,y'+s) = \kappa(y,y')$ for all $y,y',s \in \mathbb R^d$), but the variable-bandwidth kernel~\eqref{eq:vb} is not translation-invariant if $\sigma$ is a non-constant function. In the linear torus rotation and skew torus rotation examples in \cref{sec:lineartor,sec:skewtor}, respectively, the data-dependent bandwidth function $\sigma$ is, in general, non-constant but converges as $N\to\infty$ to a constant function on the support of the sampling distribution  of the data when using the $F_4$ (``flat'') embedding~\eqref{eq:F4}. The latter, implies that the associated kernel integral operator $G_\tau$ commutes with the Koopman operators of the linear and skew torus rotations, and thus that any given Koopman eigenspace can be spanned by finitely many basis functions $\phi_j$. In the case of the torus examples using the $F_3$ embedding~\eqref{eq:F3} into $\mathbb R^3$ or the L63 system (\cref{sec:L63}), the bandwidth function $\sigma$ is nonconstant even in the $N\to\infty$ limit. In such cases, we cannot expect to completely recover Koopman eigenspaces with finite basis expansions.

\section{Koopman eigenfunctions for linear and skew torus rotations}
\label{app:eigf}
We plot selected known eigenfunctions for the Koopman generator of  the systems described in \cref{sec:lineartor} and \cref{sec:skewtor} for ease of comparison between the analytically known eigenfunctions and the approximate numerical eigenfunctions. See \cref{fig:knowneigf}.

\begin{figure}
    \centering
    \includegraphics[width=\textwidth]{"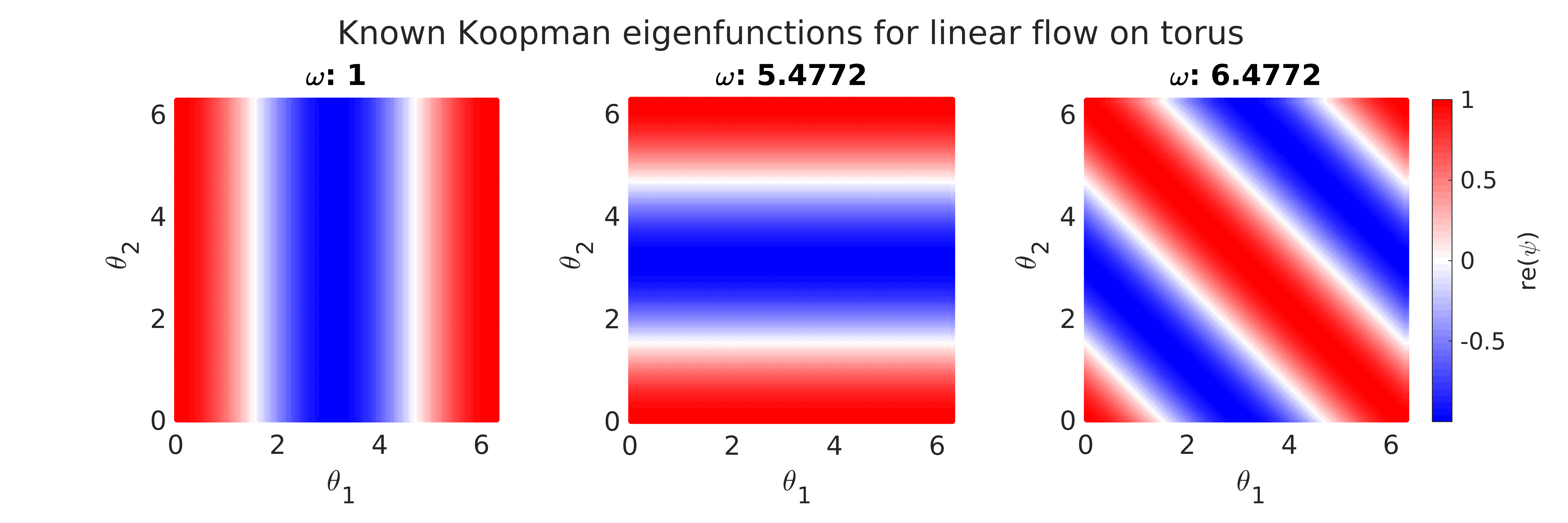"}
    \includegraphics[width=\textwidth]{"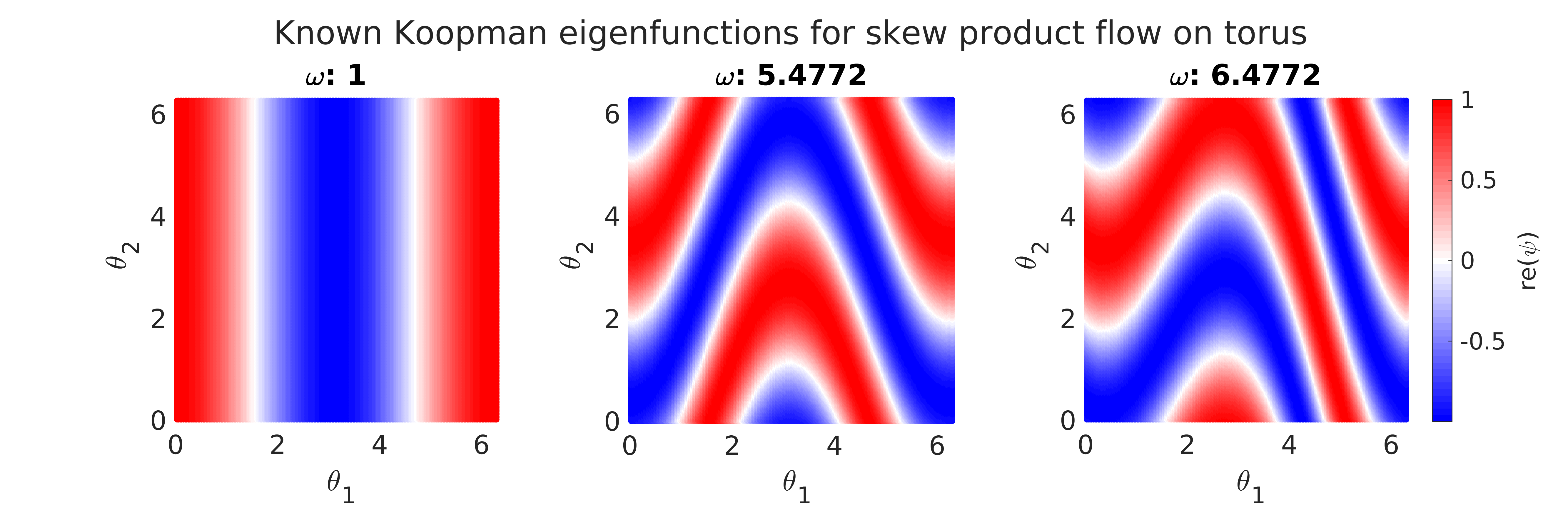"}
    \caption{Scatterplots of real parts of Koopman eigenfunctions $\psi_j$ on the torus parameterized by $\theta_1$ and $\theta_2$ for eigenfrequencies equal to $\omega_j = 1$ (left), $\omega_j = \sqrt{30}$ (middle), and $\omega_j = 1 + \sqrt{30}$ (right) with the same colormaps as the figures in \cref{sec:examples}. Top: Koopman eigenfunctions for the linear flow on the torus.
    Bottom: Koopman eigenfunctions for the skew-product flow on the torus}
    \label{fig:knowneigf}
  \end{figure}

\section*{Acknowledgments}
We thank Michael Montgomery, Keefer Rowan, and Travis Russell for helpful conversations. DG acknowledges support from the US National Science Foundation under grant DMS-1854383, the US Office of Naval Research under MURI grant N00014-19-1-242, and the US Department of Defense, Basic Research Office under Vannevar Bush Faculty Fellowship grant N00014-21-1-2946. CV was supported by the US National Science Foundation Graduate Research Fellowship under grant DGE-1839302.

\section*{References}

\bibliographystyle{plain}
% \bibliography{references.bib,references_dg.bib}

\end{document}